\newcommand{\Title}{Title}
\numberwithin{equation}{section}
\theoremstyle{definition}\newtheorem{definition}{Definition}[section]
\newtheorem{defititle}[definition]{\Title}
\newtheorem{notation}[definition]{Notation}
\newtheorem{remark}[definition]{Remark}
\newtheorem{remarks}[definition]{Remarks}
\newtheorem{ex}[definition]{Example}
\newtheorem{exs}[definition]{Examples}}
\newtheorem{prop}[definition]{Proposition}
\newtheorem{proposition-definition}[definition]{Proposition-Definition}
\newtheorem{lemma}[definition]{Lemma}
\newtheorem{thm}[definition]{Theorem}
\newtheorem{cor}[definition]{Corollary}
\newtheorem*{prop*}{Proposition}
\newtheorem*{theorem*}{Theorem}
\newcommand{\cB}{\mathcal{B}}
\newcommand{\cD}{\mathcal{D}}
\newcommand{\cF}{\mathcal{F}}
\newcommand{\cR}{\mathcal{R}}
\newcommand{\cU}{\mathcal{U}}
\newcommand{\cH}{\mathcal{H}}
\newcommand{\cf}{{\it cf.}\/ }
\def\gpd{\,\lower1pt\hbox{$\longrightarrow$}\hskip-.24in\raise2pt
             \hbox{$\longrightarrow$}\,}
\renewcommand{\latticebody}{\drop@{ }}
\newcommand{\N}{\ensuremath{\mathbb N}}
\newcommand{\C}{\ensuremath{\mathbb C}}
\newcommand{\R}{\ensuremath{\mathbb R}}
\newcommand{\g}{\ensuremath{\mathfrak{g}}}
\newcommand{\cX}{\mathcal{X}}
\newcommand{\RR}{\ensuremath{\mathbb R}}
\def\act{\mathbin{\hbox{$<\kern-.4em\mapstochar\kern.4em$}}}
\def\ract{\mathbin{\hbox{$\mapstochar\kern-.3em>$}}}
\def\PB(#1,#2,#3,#4){\left\{\begin{matrix}#1&\!\!\!\stackrel{?}{\longrightarrow}&\!\!\!#2\\
\downarrow&&\!\!\!\downarrow\\
#3&\!\!\!\stackrel{?}{\longrightarrow}&\!\!\!#4\end{matrix}\right\}}
\def\pb(#1,#2,#3,#4){ \hom(#1 \to #3, #2 \to #4)}
\begin{document}

\begin{center}
{\Large\bf Riemannian metrics and Laplacians for generalised smooth distributions\footnote{AMS subject classification: 58J60 ~ Secondary 53C17, 58A30, 35H10, 35R01. Keywords: vector distribution, singular foliation,  Riemannian structures, Laplacian, differential operators, subelliptic estimates, hypoellipticity.}}

%58J60 Partial differential equations on manifolds; differential operators Relations with special manifold structures (Riemannian, Finsler, etc.)
% 53C17 Sub-Riemannian geometry
% 35H10 Hypoelliptic equations
% 35R01 Partial differential equations on manifolds 
% 58A30 Vector distributions (subbundles of the tangent bundles)

\bigskip

{\sc by Iakovos Androulidakis and
 Yuri Kordyukov\footnote{The second named author was supported by the Russian Foundation of Basic Research, grant no. 16-01-00312.} 
}
 
\end{center}

{\footnotesize
Department of Mathematics 
\vskip -4pt National and Kapodistrian University of Athens
\vskip -4pt Panepistimiopolis
\vskip -4pt GR-15784 Athens, Greece
\vskip -4pt e-mail: \texttt{iandroul@math.uoa.gr}

\vskip 2pt Institute of Mathematics  
\vskip-4pt Ufa Federal Research Centre
\vskip-4pt Russian Academy of Science 
\vskip-4pt 112 Chernyshevsky str.
\vskip-4pt 450008 Ufa, Russia
%\vskip-4pt Russia 
\vskip-4pt e-mail: \texttt{yurikor@matem.anrb.ru}

%\vskip 2pt Radboud Universiteit Nijmegen
%\vskip-4pt IMAPP 
%\vskip-4pt Heyendaalseweg 135
%\vskip-4pt 6525 AJ Nijmegen
%\vskip-4pt The Netherlands
%\vskip-4pt e-mail: \texttt{i.marcut@math.ru.nl}
}
\bigskip
\everymath={\displaystyle}

\date{today}

\begin{abstract}\noindent 
We show that any generalised smooth distribution on a smooth manifold, possibly of non-constant rank, admits a Riemannian metric. Using such a metric, we attach a Laplace operator to any smooth distribution as such. When the underlying manifold is compact, we show that it is essentially self-adjoint. Viewing this Laplacian in the longitudinal pseudodifferential calculus of the smallest singular foliation which includes the distribution, we prove hypoellipticity. 
%Examples include all sub-Riemannian manifolds.
%We attach, in a geometric way, a Laplace operator to any smooth distribution on a smooth manifold, possibly of non-constant rank. When the manifold is compact, we show that it is essentially self-adjoint. Viewing this operator in the longitudinal pseudodifferential calculus of the smallest singular foliation which includes the distribution, we prove hypoellipticity. Examples include all sub-Riemannian manifolds. 
\end{abstract}
 
\setcounter{tocdepth}{2} %doesn't display subsections in TOC 

\addcontentsline{toc}{section}{Introduction}

\tableofcontents

\section*{Introduction}\label{sec:intro}

%\subsection*{Objectives and state of the art}

One way to define and study important geometric and topological invariants of a smooth manifold is by attaching a natural differential operator to it and studying its analytic invariants. Such differential operators usually arise geometrically, that is to say using an appropriate geometric structure on the manifold. A fundamental example of such an operator is the Laplace-Beltrami operator (or the Laplacian) of a Riemannian manifold. Once a geometric differential operator is introduced, its self-adjointness needs to be proven first, in order to set a well-posed unbounded operator in a Hilbert space with good spectral properties. Here it is usually essential that the operator is an \emph{elliptic} differential operator. It allows one to use methods and results of theory of elliptic partial differential operators such as, first of all, the existence of parametrix, elliptic estimates and elliptic regularity. An appropriate pseudodifferential calculus and the associated scale of Sobolev spaces plays an important role in these considerations. Such an approach was generalised to many other settings, for instance, to singular manifolds, dynamical systems and foliations. In this article we carry out the first steps for the study of the Laplacians on an arbitrary generalised smooth distribution.

Roughly, generalised smooth distributions are smooth assignments of vector subspaces $D_x$ of $T_x M$, for every $x \in M$. These subspaces are not required to have constant rank.
%,\icomment{Do we need to be that specific in the introduction? "in fact the rank varies in a semicontinuous way" I propose to remove this phrase.}
This class contains all the distributions arising in sub-Riemannian geometry, in particular the non-equiregular sub-Riemannian structures (this is thanks to the formulation in \cite{Agrachev97} \cite{Bellaiche} of sub-Riemannian structures as anchored vector bundles $\rho : E \to TM$) as well as singular foliations, that is, involutive generalised smooth distributions (\cf \cite{AS1}).

Much like \cite{AS1}, we view a (smooth) distribution on a manifold $M$ much more in terms of its dynamics. This means that we focus on the module of vector fields $\cD$ rather than the family of vector subspaces $D=\bigcup_{x \in M}D_x$ of the tangent bundle $TM$ (whose dimension is non-constant). Note that $\cD$  is a primitive of $D$: Indeed, each $D_x$ is the evaluation at $x$ of $\cD$. For instance, given a sub-Riemannian structure $\rho : E \to TM$, we have $\cD = \rho(\Gamma_c E)$\footnote{We restrict to compactly supported vector fields in order to exhibit our results in the easiest possible setting. Removing the compact support condition requires to work with sheaves (\cf \cite{AZ5}).}, while $D=\rho(E)$.

A pseudodifferential calculus for a singular foliation was introduced in \cite{AS2}. Moreover, a longitudinal Laplacian was attached to a foliation as such, albeit merely as a sum of squares rather than by the use of some Riemannian metric. The singularities of the foliation made it quite difficult to use such a metric in a smooth way. Nevertheless, the longitudinal Laplacians were proven to be self-adjoint and elliptic, the latter thanks to the involutivity property.

If we relax the involutivity hypothesis, we pass to the much larger category of generalised smooth distributions. In order to attach a Laplacian to such a distribution in a geometric way, it is necessary to have a Riemannian metric on such a pathological object. Assuming such a metric can be constructed, the self-adjointness of the associated Laplacian might be expected. One can also consider the Laplacian to be longitudinally elliptic along the distribution, but it is more essential in the case when the distribution is involutive, that is, it is a singular foliation, because then one can use the pseudodifferential calculus mentioned above. In the case of a  general distribution $\cD$, it is natural to consider the smallest (singular) foliation $\cU(\cD)$ which includes $\cD$. In favourable cases, this foliation is given by a kind of universal enveloping algebra of the given distribution, otherwise it is just the one whose leaves are the entire connected components of $M$.  One can use the longitudinal pseudodifferential calculus for $\cU(\cD)$. The operator is not longitudinally elliptic with respect to $\cU(\cD)$, but, locally, it can be considered as the sum of squares operator for a family of vector fields satisfying the bracket generating condition along the leaves of $\cU(\cD)$. 
%Ellipticity, however, is less likely to hold in full force, since the study of foliations showed that involutivity plays a central role for ellipticity. One hopes that an appropriate replacement of the involutivity condition might ensure that such a Laplacian will be hypoelliptic. On the other hand, recall %\footnote{Montgomery (\cf \cite{Montgomery}) noticed that the bracket generating condition on a sub-Riemannian structure is the key property for their study, thus placing sub-Riemannian geometry more within control theory rather than Riemannian geometry.}\ycomment{I am not sure that this is due to Montgomery. It looks that Brockett was (one of the) first.  Brockett, R. W. Control theory and singular Riemannian geometry. New directions in applied mathematics (Cleveland, Ohio, 1980), pp. 11--27, Springer, New York-Berlin, 1982. } 
Recall that the bracket generating condition has central importance in sub-Riemannian geometry and control theory (\cf for instance, \cite{Brockett}, \cite{Montgomery}). It is also the key to H\"{o}rmander's result on the hypoellipticity of the sum of squares operator arising from given vector fields $X_1,\ldots,X_k$. So we can expect the Laplacian associated with the distribution $\cD$ should be longitudinally hypoelliptic with respect to $\cU(\cD)$.

The above considerations were confirmed in \cite{YK1} and \cite{YK2}, where the horizontal Laplacian of a smooth constant rank distribution $(M,\cD)$ was introduced and studied. In this case, the module $\cD$ is projective and in view of the familiar Serre-Swan theorem, one may think of $\cD$ as the $C^{\infty}(M)$-module of sections of the vector subbundle $D$ of $TM$. %\icomment{The subbundle $H$ is really the vector bundle $D$ mentioned before. The Serre-Swan theorem first observes that $D_x = \cD/I_x\cD$, and then proves that the family of vector spaces $\bigcup_{x \in M}D_x$ is a vector bundle. So, throughout the introduction, $H$ should be replaced by $D$. (Alternatively, 2 paragraphs above, we should write $H=\bigcup_{x \in M}D_x$ instead of $D=\bigcup_{x \in M}D_x$.)} 
This is quite a large class of distributions, for instance it includes all the regular foliations and the constant-rank sub-Riemannian manifolds. In \cite{YK1}, a Riemannian metric on $\cD$ is defined to be a smooth family of inner products in the fibers of $D$ and the associated Laplace operator $\Delta_{\cD}$ (denoted by $\Delta_{D}$ in \cite{YK1}) is introduced. Using the Chernoff self-adjointness criterion \cite{Chernoff}, it was shown that this Laplacian is essentially self-adjoint as an operator on $L^2(M)$. For the study of more elaborated analytic properties of $\Delta_{\cD}$, the longitudinal pseudodifferential calculus for singular foliations developed in \cite{AS1}, \cite{AS2} plays a crucial role in \cite{YK1}, \cite{YK2}. It turns out that the horizontal Laplacian $\Delta_{\cD}$ of the distribution $\cD$ constructed in \cite{YK1} lives in the longitudinal pseudodifferential calculus of the foliation $\cU(\cD)$ and satisfies subelliptic estimates and hypoellipticity property in the scale of longitudinal Sobolev spaces.

Here we manage to extend these results to an arbitrary generalised smooth distribution $(M,\cD)$. That is to say, without the constant rank assumption. %We consider a $C^{\infty}(M)$-submodule $\cD$ of the module of compactly supported\footnote{We restrict to compactly supported vector fields in order to exhibit our results in the easiest possible setting. Removing the compact support condition requires to work with sheaves (\cf \cite{AZ5})} vector fields of $M$, with the only algebraic assumption that $\cD$ is locally finitely generated. 
The main difficulty here is that the non-constant rank prevents the use of smooth families of inner products in the classical sense, whence one first have to understand how to construct the horizontal Laplacian in a geometric way. %Also, as it happens, the bracket generating condition, plays a central role in the results we give in the current article as well. 

\subsection*{Methods and results}

As mentioned above, we view a (smooth) distribution on a manifold $M$ much more in terms of its dynamics, that is, as the $C^\infty_c(M)$-module $\cD$ of vector fields tangent to the distribution, which is assumed to be locally finitely generated.
%It is necessary to focus on the module of vector fields $\cD$, rather than $D$, in order to define the notion of a Riemannian structure of the distribution and give a geometric construction of our horizontal Laplacian $\Delta_{\cD}$. %To see this, recall that a Laplacian of a smooth compact manifold can be defined in a geometric way using a Riemannian structure. We show that Riemannian structures for distributions can be naturally defined using $\cD$ rather than $D$. More specifically, 
We introduce the fiber of the distribution $\cD$ at $x$ as a finite dimensional vector space $\cD_x = \cD/I_x\cD$, where $I_x = \{f \in C^{\infty}(M) : f(x)=0\}$ and define a Riemannian structure for $\cD$ as a family of inner products $\langle\ ,\ \rangle_x$ on $\cD_x$, depending smoothly on $x\in M$. Our first result is:

\textbf{Theorem A} \textit{Let $(M,\cD)$ be an arbitrary smooth distribution. There exists a Riemannian structure for $(M,\cD)$.}

%\ycomment{I don't see a theorem here. In particular, I don't understand what you mean by ``natural inner product''. I suggest to say something like this: We introduce the notion of a Riemannian structure for a distribution as a family of inner products $\langle\ ,\ \rangle_x$ on $\cD_x$, depending smoothly on $x\in M$, and prove the existence of a Riemannian structure for an arbitrary distribution $\cD$ on a smooth manifold $M$. The last statement is a theorem.}

The difficulty in proving the existence of a Riemannian structure as such, is that the dimensions of the ``fibers'' $\cD_x$ are not constant, actually they vary in a semicontinuous\footnote{The dimensions of their evaluations $D_x$ vary with the opposite semicontinuity.} way. In order to make sense of smoothness for the family of inner products $\{\langle\ ,\ \rangle_x\}_{x \in M}$ in an effective way, we introduce a weak notion of coordinates for the distribution $\cD$. It is inspired from the viewpoint of sub-Riemannian structures as anchored vector bundles in \cite{Agrachev97}, \cite{Bellaiche}. Specifically, since our distribution $(M,\cD)$ is locally finitely generated, locally it can be described from an anchored vector bundle. That is to say, for every point $x$ in $M$, there are a small neighborhood $U$ of $x$ in $M$ and an anchored vector bundle $\rho_U : E_U \to TM$ over $U$ so that $\cD\left|_U\right.=\rho_{U}(E_U)$. More specifically, if the restrictions of $X_1,\ldots,X_k \in \cD$ on $U$ generate the module $\cD\left|_U\right.$, then one can take $E_U$ to be just the trivial bundle $U \times \R^k$ and $\rho_U(y,\lambda_1,\ldots,\lambda_k)=\sum_{i=1}^k \lambda_i X_i(y)$. We call the data $(E_U,\rho_U)$ constructed in this specific way, a \textit{local presentation}\footnote{When $\cF$ is a singular foliation, it is easy to see that local presentations arise from \textit{bisubmersions} (\cf \cite{AS1}).} of $(M,\cD)$ at the point $x$. Of course there are lots of choices involved in the construction of a local presentation as such. We introduce an equivalence relation between local presentations, which amounts to the change of coordinates for $\cD$. The proof of Theorem A is possible because at the equivalence classes associated with this relation, the various choices we made disappear naturally.

%In the case of a constant rank distribution, $D$ is a vector sub-bundle of $TM$ and $\cD$ its $C^{\infty}(M)$-module of (compactly supported) sections. As we already mentioned, the familiar Serre-Swan theorem says that $\cD$ carries no extra information than the bundle $D$. Whence all the results we give here reduce to the ones in \cite{YK1} in the case of a constant rank distribution.

Using such a smooth family of Riemannian metrics, together with a positive density $\mu$ on $M$, given \textit{any} smooth distribution $(M,\cD)$ as above, we are able to show the following:
\begin{enumerate}
\item There is a geometric construction of a ``horizontal'' Laplace operator $\Delta_{\cD}$ for \emph{any} smooth distribution $(M,\cD)$. This is a second order differential operator acting on $C^\infty(M)$.
%This is an unbounded operator on $L^2(M,\mu)$, defined densely on $C^{\infty}(M)$. %The construction can be carried our for an arbitrary distribution $\cD$ on any smooth, finite dimensional manifold $M$.
\item Locally, $\Delta_{\cD}$ can be expressed as a sum of squares of (local) generators of the module of vector fields $\cD$.
\item The operator $\Delta_{\cD}$ fits into the following pseudodifferential calculi:
\begin{itemize}
\item The standard pseudodifferential calculus of $M$.
\item When the algebra $\cU(\cD) = [\cD,\ldots,[\cD,\cD]]$ is a (singular) foliation (\cf \cite{AS1}), then $\Delta_{\cD}$ fits in the associated longitudinal pseudodifferential calculus \cite{AS2}.
\end{itemize}
\item When $\cU(\cD)$ is a foliation, the operator $\Delta_{\cD}$, considered as an unbounded operator on $L^2(M,\mu)$, with domain $C^{\infty}(M)$, is the (trivial) representation of a certain unbounded multiplier of $C^{\ast}_r(\cU(\cD))$.
\end{enumerate}

Note that if $\cD$ arises from a sub-Riemannian structure, the bracket generating condition says that $\cU(\cD)$ is the entire algebra  $\cX_c(M)$ of compactly supported vector fields on $M$. Hence, in this case $\Delta_{\cD}$ just lives in the standard pseudodifferential calculus of $M$. Also, in the case of a constant rank distribution, $D$ is a vector sub-bundle of $TM$ and $\cD$ is the $C^{\infty}(M)$-module of its (compactly supported) sections. As we already mentioned, the familiar Serre-Swan theorem says that $\cD$ carries no extra information than the bundle $D$. Whence, in the case of a constant rank distribution, all the results we give here reduce to the ones in \cite{YK1}.

Next, we are interested in the questions of self-adjointness and hypoellipticity of $\Delta_{\cD}$. To this end, we restrict to the case where $M$ is a compact manifold. Adapting the proofs given in \cite{YK1} in our context, we are able to show:

\textbf{Theorem B} \textit{The horizontal Laplacian $\Delta_{\cD}$, as an unbounded operator on the Hilbert space $L^2(M,\mu)$, with domain $C^{\infty}(M)$, is essentially self-adjoint.}

\textbf{Theorem C} \textit{When $\cU(\cD)$ is a foliation, the horizontal Laplacian $\Delta_{\cD}$ is longitudinally hypoelliptic.}

Note that the notion of longitudinal hypoellipticity here is formulated using the scale of longitudinal Sobolev spaces $H^s(\cU(\cD))$ given in \cite{YK1}. Also note that the proof of Theorem C applies for the multiplier of $C^{\ast}(\cU(\cD))$ mentioned above. In order to prove the self-adjointness of this multiplier, it seems that one needs to generalise to multipliers as such the parametrix construction as it is done in \cite{YK2} in the case when $\cD$ is a constant rank distribution such that $\mathcal U(\cD)$ is a regular foliation. 
%the Chernoff criterion (\cf \cite{Chernoff}) for essential self-adjointness. 
We leave this for future work.

Last, in the appendix we discuss some further developments. Specifically, in \S \ref{app:deRhamHodge}, we introduce the notion of smooth longitudinal differential forms for a generalised smooth distribution. Then we use our notions of local presentation and Riemannian metric to construct de Rham complex and a Hodge Laplacian of an arbitrary singular foliation. Finally, we introduce the notion of isometry for a Riemannian metric on a generalised smooth distribution (\S \ref{app:isometry}) and prove the invariance of the horizontal Laplacian under isometries.

\textbf{Notation:} Throughout the article $M$ is a smooth manifold with dimension $n$. We denote by $\cX(M)$ the $C^{\infty}(M)$-module of vector fields on $M$. Also, we denote by $\cX_c(M)$ the $C^{\infty}(M)$-module of compactly supported vector fields on $M$.

\textbf{Acknowledgements} I.A. would like to thank Konstantin Athanassopoulos, for several useful discussions on sub-Riemannian geometry. Y.K. is grateful to the Department of Mathematics of the National and Kapodistrian University of Athens and to the University Paris Diderot for hospitality and support. Both authors would like to thank Georges Skandalis for his suggestions.

\section{Smooth distributions}

We start in this section with our definition of a generalised smooth distribution, which includes the non-constant rank case, and give several examples. Then we introduce the notion of local presentations, the basic tool for our treatment of distributions as such.

%Throughout this article $M$ is a smooth manifold with dimension $n$.

\subsection{Distributions as modules of vector fields}

We start with the definition for distributions in terms of vector fields. We will need to recall the following from \cite[\S 1.1]{AS1}. 

\begin{enumerate}
%\item Let $U$ be an open subset of $M$. We denote $\cX(M)$ (resp. $\cX(U)$) the $C^{\infty}(M)$-module of vector fields of $M$ (resp. the $C^{\infty}(U)$-module of vector fields of $U$). Also, we denote $\cX_c(M)$ (resp. $\cX_c(U)$) the $C^{\infty}(M)$-module of compactly supported vector fields of $M$ (resp. the $C^{\infty}(U)$-module of compactly supported vector fields of $U$). Put $\iota_U : U \hookrightarrow M$ the inclusion map. For any vector field $X \in \cX(M)$ we write $X\left|_U\right. = X \circ \iota_U$.
%\item Let $\cD$ be a $C^{\infty}(M)$-submodule of $\cX_c(M)$. The \emph{restriction} of $\cD$ to an open subset $U$ of $M$ is the $C^{\infty}(U)$-submodule of \textcolor{blue}{$\cX_c(U)$} generated by $f\cdot X\left|_U\right.$, where $f \in C^{\infty}_c(U)$ and $X \in \cD$. We denote this restriction $\cD\left|_U\right.$. 

%\ycomment{Actually, since $U$ is a smooth manifold itself, there is no need to talk about $U$ separately. Here is my suggestion: 

%\item For a smooth manifold $N$, we denote by $\cX(N)$ the $C^{\infty}(N)$-module of vector fields on $N$. Also, we denote by $\cX_c(N)$ the $C^{\infty}(N)$-module of compactly supported vector fields on $N$.

\item Let $\cD$ be a $C^{\infty}(M)$-submodule of $\cX_c(M)$ and let $U$ be an open subset of $M$. Put $\iota_U : U \hookrightarrow M$ the inclusion map. For any vector field $X \in \cX(M)$ we write $X\left|_U\right. = X \circ \iota_U$. The \emph{restriction} of $\cD$ to $U$ is the $C^{\infty}(U)$-submodule of $\cX_c(U)$ generated by $f\cdot X\left|_U\right.$, where $f \in C^{\infty}_c(U)$ and $X \in \cD$. We denote this restriction $\cD\left|_U\right.$.
%}

\item We say that the module $\cD$ is \emph{locally finitely generated} if, for every $x \in M$ there exist an open neighbourhood $U$ of $x$ and a finite number of vector fields $X_1,\ldots,X_k$ in $\cX(M)$ such that $\cD\left|_U\right. = C^{\infty}_c(U) \cdot X_1\left|_U\right. + \ldots + C^{\infty}_c(U) \cdot X_k\left|_U\right.$. We say that the vector fields $X_1,\ldots,X_k$ \emph{generate} the restriction $\cD\left|_U\right.$ of $\cD$ to $U$.
\end{enumerate}

We will also need the following construction, which is inspired from the notion of universal enveloping algebra. Recall that $(C^\infty(M), \cX_c(M))$ is a Lie-Rinehart algebra %\footnote{Recall that a Lie-Rinehart algebra \cite{Rinehart} is a pair $(A,L)$, where $A$ is a commutative $R$-algebra and $L$ a Lie algebra over $R$ which acts on $A$ by derivations and is also an $A$-module satisfying compatibility conditions that generalise the compatibility conditions between the structures of a $C^\infty(N)$-module and of a Lie algebra on the space $\cX(N)$ of smooth vector fields on a smooth manifold $N$.} 
in the sense of \cite{Rinehart}. For the convenience of the reader, we recall this notion briefly: Let $R$ be a commutative ring with $1$. A Lie-Rinehart algebra \cite{Rinehart} is a pair $(A,L)$, where $A$ is a commutative $R$-algebra and $L$ a Lie algebra over $R$ which acts on $A$ by derivations and is also an $A$-module satisfying compatibility conditions that generalise the compatibility conditions between the structures of a $C^\infty(N)$-module and of a Lie algebra on the space $\cX(N)$ of smooth vector fields on a smooth manifold $N$.%\ycomment{It was my idea to use the name universal enveloping algebra, but now I understand that I was wrong. I am sorry. It is more like the linear subspace generated by a set of vectors or the subgroup generated by elements of a group. So I suggest to say that this is the Lie-Rinehart subalgebra of the Lie-Rinehart algebra $(C^\infty(M), \cX_c(M))$ generated by $\cD$. }

\begin{enumerate}
\item[c)] Let $\cD$ be a $C^{\infty}(M)$-submodule of $\cX_c(M)$. The \emph{Lie-Rinehart subalgebra of $(C^\infty(M), \cX_c(M))$} associated to $\cD$ is the minimal submodule $\cU(\cD)$ of $\cX_c(M)$ which contains $\cD$ and is involutive, namely it satisfies $[X,Y] \in \cU(\cD)$ for every $X, Y \in \cU(\cD)$. Specifically, $\cU(\cD)$ is the $C^{\infty}(M)$-submodule of $\cX_{c}(M)$ generated by elements of $\cD$ and their iterated Lie brackets $[X_1,\ldots,[X_{k-1},X_k]]$ such that $X_i \in \cD$, $i = 1,\ldots,k$, for every $k \in \N$.
\end{enumerate}

We proceed now with our definition of smooth distribution, which focuses more on the dynamics involved. It is inspired by the definition of a singular foliation in \cite{AS1}.

\begin{definition}\label{dfn:distr}
A smooth distribution on $M$ is a locally finitely generated $C^{\infty}(M)$-submodule $\cD$ of the $C^{\infty}(M)$-module $\cX_c(M)$. We denote a distribution as a pair $(M,\cD)$.
\end{definition}

\begin{exs}\label{exs:distr}
\begin{enumerate}
\item A foliation $(M,\cF)$ in the sense of \cite{AS1} is a smooth distribution. Recall that $\cF$ is a locally finitely generated $C^{\infty}(M)$-submodule of $\cX_c(M)$ which is involutive, namely $[\cF,\cF]\subseteq \cF$. In particular, an arbitrary non-free action of a finite-dimensional Lie group on $M$ defines a foliation in the sense of \cite{AS1} and, therefore, a smooth distribution in the above sense.

\item Recall that an anchored vector bundle over $M$ is a vector bundle $E \to M$ endowed with a morphism of vector bundles $\rho : E \to TM$ over the identity diffeomorphism of $M$. The map $\rho$ induces a morphism of $C^{\infty}(M)$-modules $\Gamma_c E \to \cX_c(M)$, which we also denote $\rho$ by abuse of notation. Then the module $\cD_E = \rho(\Gamma_c E)$ is locally finitely generated: Indeed, if $\sigma_1,\ldots,\sigma_k$ is a frame of $E$ over an open $U \subset M$, the module $\cD_E\left|_U\right.$ is generated by the restrictions to $U$ of the vector fields $X_i = \rho(\sigma_i)$, $1 \leq i \leq k$. Whence $(M,\cD_E)$ is a smooth distribution.

\item When $\cU(\cD)$ is locally finitely generated, the pair $(M,\cU(\cD))$ is also a smooth distribution. In this case $(M,\cU(\cD))$ is a foliation in the sense of \cite{AS1}, since $\cU(\cD)$ is involutive by construction. Starting from a foliation $(M,\cF)$, the module $\cF$ is already involutive, whence $\cU(\cF)=\cF$.

\item Now start with a smooth distribution $(M,\cD)$ which is not a foliation. If $(M,\cU(\cD))$ is a foliation, then any other foliation $(M,\cF)$ such that $\cD \subseteq \cF$ contains $\cU(\cD)$. Whence $(M,\cU(\cD))$ is the smallest foliation which contains $(M,\cD)$.

\item %\icomment{We need a name for this particular distribution...} 
Let $f \in C^{\infty}(\R^2)$ be defined by $f(x,y) = e^{-\frac{1}{x}}$ if $x >0$ and $f(x,y)=0$ if $x\leq 0$. Consider the smooth distribution $(\R^2,\cD)$ where $\cD$ is the $C^{\infty}_c(\R^2)$-module spanned by the vector fields $X = \partial_x$ and $Y=f\partial_y$. Note that $\cD$ is not involutive: Indeed, $[X,Y]=-x^{-2}X$ and the function $g(x,y)=x^{-2}$ is obviously not in $C^{\infty}(\R^2)$. We find that $\cU(\cD)$ is the module generated by $X$ and $Y_n$ where $Y_n(x,y) = x^{-n}f(x,y)\partial_y$ for all $n \in \N$. Whence $\cU(\cD)$ is not (locally) finitely generated.
%\item \tobedone Heisenberg group, contact manifolds, etc.

\item Recall from \cite{Agrachev97}, \cite{Bellaiche} the general definition of a sub-Riemannian structure on the manifold $M$: This is an anchored vector bundle $\rho : E \to TM$ such that $\cD_E$ satisfies the bracket generating condition $\cU(\cD_E)=\cX_c(M)$. This definition covers both the equiregular and the non-equiregular sub-Riemannian structures. One finds important sub-Riemannian structures e.g. in $SU(2)$, the Heisenberg group, any contact manifold. In the next examples we recall some non-equiregular sub-Riemannian structures and their associated smooth distributions.

\item (Grushin plane.) Let $M=\R^2$ and $E=\R^2\times\R^2$. If $\sigma_1, \sigma_2$ is the standard frame of $E$, we define the map $\rho(\sigma_1) = \partial_x$, $\rho(\sigma_2)=x\partial_y$. That is, $\cD_{E}=\langle \partial_x, x\partial_y \rangle$. The $y$-axis is the set of singular points.

\item (Martinet space.) Let $M=\R^3$, $E$ the trivial bundle $\R^3 \times \R^2$ and $\rho$ the map which sends the standard frame of $E$ to the vector fields $X = \partial_x$, $Y = \partial_y + \frac{x^2}{2}\partial_z$. The $yz$-plane is the set of singular points.

\item Fix $f \in C^{\infty}(\R^4)$ and consider $\rho$ the map which sends the standard frame of the trivial bundle $\R^4\times \R^3$ to the vector fields $X = \partial_x$, $Y=\partial_y + x\partial_z + \frac{x^2}{2}\partial_w$, $Z=f\cdot\partial_w$. We find that the set of singular points is $S=f^{-1}(\{0\})$.
\end{enumerate} 
\end{exs}

%\ycomment{I suggest to make just one section, no subsections}
%\subsubsection{Fibers of a distribution}\label{sec:fibers}

To justify the terminology ``distribution'' in Definition \ref{dfn:distr} let us fix a smooth distribution $(M,\cD)$. Pick a point $x \in M$ and consider the $C^{\infty}(M)$-submodule $I_x\cD$, where $I_x = \{f \in C^{\infty}(M) : f(x)=0\}$. %\ycomment{I don't understand what you mean by the maximal ideal here. Maximal ideal of the module? or maybe the submodule?} \icomment{We consider $I_x\cD$ as a maximal ideal of the submodule $\cD$, since we take the quotient $\cD / I_x\cD$. But we don't really use the maximality anywhere, so it's better to just write "...the ideal $I_x\cD$ of the module $\cD$".}\ycomment{I know the notion of ideal of an algebra, but not of a module. Can you give me a reference? } 
Since $\cD$ is locally finitely generated, the quotient $\cD_x = \cD/I_x\cD$ is a finite dimensional vector space. We call it the \textit{fiber} of $(M,\cD)$ at $x$. For any $X\in \cD$, we will denote by $[X]_x$ the corresponding class in $\cD_x$. We attach the following data to this vector space: 
\begin{enumerate}
\item There is a field of vector spaces $\cup_{x \in M}\cD_x$. The dimension map $\dim_{\cD} : M \to \N, x \mapsto \dim(\cD_x)$ is upper semicontinuous. %\ycomment{$\dim_{\cD} : M \to \N$?}
\item Evaluation gives rise to a linear map $ev_x : \cD_x \to T_x M$. Put $D_x$ the image of this map. It is a vector subspace of $T_x M$. The field of vector spaces $\cup_{x \in M}D_x$ is a distribution of $M$ in the usual sense. The dimension map $\dim_D : M \to \N, x \mapsto \dim(D_x)$ is lower semicontinuous. 
\item Put $\cD(x) = \{X \in \cD : X(x)=0\}$ and $k^{\cD}_x = \cD(x)/I_x\cD$. The evaluation map $ev_x([X])=X(x)$ for every $X \in \cD$, gives rise to an exact sequence of vector spaces 
\begin{equation}\label{seq:distr}
0 \to k^{\cD}_x \to \cD_x \stackrel{ev_x}{\longrightarrow} D_x \to 0.
\end{equation}
\end{enumerate}

\begin{exs}\label{exs:fibcalc}
\begin{enumerate}
Let us look at the distribution $\cD_{E}$ arising from an anchored vector bundle $\rho : E \to TM$, as in item b) of examples \ref{exs:distr}. Fix a point $x$ in $M$. Recall from the Serre-Swan theorem that the fiber $E_x$ is the quotient of the $C^{\infty}(M)$-module $\Gamma_c E$ by the $C^{\infty}(M)$-submodule $I_x \Gamma_c E$ (\cf \cite{AZ5}). Since $\rho(I_x\Gamma_c E)\subseteq I_x \cD_E$ we obtain a linear epimorphism $\widehat{\rho_x} : E_x \to (\cD_E)_x$. Whence the dimension of the fiber $(\cD_E)_x$ at any $x \in M$ is bounded above by the rank of $E$.

\item Let us calculate explicitly the fibers of the distribution for the Grushin plane. First, if $p = (x,y)$ with $x \neq 0$, we have $\cD_{p}=\R^2 = D_p$ and $k_p^{\cD}=0$. To see this, consider $\lambda, \mu \in \R$ such that $\lambda[\partial_x]_p + \mu[x\partial_y]_p = 0$. This means that there exists $\phi \in C^{\infty}(\R^2)$ with $\phi(p)=0$ such that $\lambda\partial_x + \mu x \partial_y + \phi\partial_x + \phi x \partial_y = 0$. Evaluating this equation at $p$ we find $\lambda\partial_x(p) + \mu\partial_y(p) = 0$, whence $\lambda=\mu=0$.

Now take $p$ on the $y$-axis, namely $p=(0,y)$ for some $y \in \R$. We'll show that $\cD_p = \R^2$; since $D_p = \R$, this implies $k^{\cD}_p = \R$. To this end, we first show that $[x\partial_y]_p \in \cD_p$ does not vanish. Indeed, the vanishing of this element means that there exists $\phi \in C^{\infty}(\R^2)$ with $\phi(p)=0$ such that $x\partial_y = \phi(\alpha\partial_x + \beta x \partial_y)$ for some $\alpha, \beta \in C^{\infty}(\R^2)$. Whence $(1-\phi\beta)x\partial_y = \phi\alpha\partial_x$, which implies that $1-\phi\beta = 0$. Evaluating the latter at $p$ gives a contradiction.

Now take $\lambda, \mu \in \R$ such that $\lambda[\partial_x]_p + \mu [x\partial_y]_p = 0$. This means that there exist functions $\alpha, \beta \in C^{\infty}(\R^2)$ with $\alpha(p)=\beta(p)=0$, such that $\lambda\partial_x + \alpha\partial_x + \mu x \partial_y + \beta x \partial_y = 0$. Evaluating this at $p$ we find $\lambda=0$, therefore $\mu=0$ as well.

\item The fibers of both the Martinet space and the example in item i) can be calculated similarly with the Grushin plane. Notice that in all these three examples the dimension of $\cD_p$ is constant at every $p \in M$, whereas the dimension of $D_p$ is not constant. In fact, the field of vector spaces $\cup_{p \in M}\cD_p$ is nothing else than the trivial bundle $E$ mentioned in each of these examples.

\item A ``more singular'' example is item e) of examples \ref{exs:distr}. Let us calculate explicitly the exact sequence \eqref{seq:distr} for this example, at a point $p$ in $\R^2$. First, if $p = (x,y)$ with $x\lneq 0$ then the function $f$ vanishes in a neighbourhood $U$ of $p$, so $\cD\left|_U\right. = \langle \partial_x \rangle$. It is easy to see that $\cD_p = \R$ in this case; In fact, assuming $U$ is small enough so that it does not contain any points whose $x$-coordinate is $\geq 0$, we find that $\cD\left|_U\right.$ is a (regular) foliation whose leaves are lines parallel to the $x$-axis. Therefore $k_p^{\cD}=0$ and $\cD_p = D_p = \R$.

Second, if $p = (x,y)$ with $x \gneq 0$, there is a neighbourhood $U$ of $p$ such that the restriction of the function $f$ to $U$ is invertible. It follows that $\cD\left|_U\right. = \langle \partial_x, \partial_y \rangle$, whence $\cD_{p}=\R^2$; In this case, $\cD\left|_U\right.$ is just the foliation on $U$ by a single leaf. Therefore $k_p^{\cD}=0$ and $\cD_p = D_p = \R^2$.

The last case is when $p=(0,y)$ for some $y \in \R$. By the same calculation as in the case of the Grushin plane we find $\cD_p = \R^2$. The vector field $f\partial_y$ vanishes at $p=(0,y)$, whence $D_p=\R$ and the exact sequence \eqref{seq:distr} gives $k_p^{\cD}=\R$.
\end{enumerate}
\end{exs}

The fibers $\cD_{x}$ provide a way to find a minimal set of generators of $\cD$ locally. This is due to Prop. \ref{prop:distr} below, which is proven exactly\footnote{The proof given in \cite[Prop. 1.5]{AS1} does not make use of Lie brackets, so it holds for general distributions in the sense of our definition \ref{dfn:distr}.} as in \cite[Prop. 1.5]{AS1}.

\begin{prop}\label{prop:distr}
Let $(M,\cD)$ be a smooth distribution and $x \in M$. 
\begin{enumerate}
\item If $X_1,\ldots,X_k \in \cD$ are such that their images in $\cD_x$ give a basis of $\cD_x$, then there exists a neighborhood $U$ of $x$ such that $X_1, \ldots, X_k$ to $U$ generate $\cD\left|_U\right.$.
\item The dimension of $D_x$ is lower semicontinuous and the dimension of $\cD_x$ is upper semicontinuous.
\item The set of continuity of $x  \mapsto \dim(D_x)$ is $$\mathcal{C} = \{x \in M : ev_x : \cD_x \to D_x \text{ is bijective } \}$$ It is an open and dense subset of $M$. The restriction $\cD\left|_{\mathcal{C}}\right.$ is a projective $C^{\infty}(\mathcal{C})$-submodule of $\cX(\mathcal{C})$, whence it is the module of sections of a vector subbundle $D$ of $T\mathcal{C}$.
\end{enumerate}
\end{prop}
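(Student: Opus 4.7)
For part (a), the plan is a standard Nakayama-style argument. By local finite generation, pick a neighbourhood $U$ of $x$ and vector fields $Y_1,\ldots,Y_m$ that generate $\cD|_U$. Since $[X_1]_x,\ldots,[X_k]_x$ is a basis of $\cD_x$, each $[Y_j]_x$ expands uniquely as $\sum_i a_{ji}[X_i]_x$, so $Y_j - \sum_i a_{ji}X_i \in I_x\cD$. Unpacking the definition, this difference is a finite sum $\sum_\ell f_{j\ell}W_{j\ell}$ with $f_{j\ell}(x)=0$ and $W_{j\ell}\in\cD$. The key trick is to re-express each $W_{j\ell}|_U$ as a $C^\infty(U)$-combination of $Y_1,\ldots,Y_m$ (possible because the $Y_p$ generate $\cD|_U$), which turns the identity into a matrix equation $Y|_U = AX|_U + H\,Y|_U$ with $H$ a smooth matrix-valued function satisfying $H(x)=0$. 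Continuity makes $I-H$ invertible on a smaller neighbourhood $U'$, so $Y|_{U'} = (I-H)^{-1}A\,X|_{U'}$ and the $X_i$ generate $\cD|_{U'}$.

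Part (b) then falls out directly. Upper semicontinuity of $\dim\cD_y$ is immediate from (a): the $k=\dim\cD_x$ vector fields constructed there generate $\cD$ on a neighbourhood, forcing $\dim\cD_y\le k$ nearby. Lower semicontinuity of $\dim D_y$ is the standard minor argument — if $r$ elements of $\cD$ have linearly independent values at $x$, the non-vanishing of a suitable $r\times r$ minor persists on an open neighbourhood.

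For part (c), the central claim is that $\mathcal{C}$ coincides with $\{x: ev_x \text{ bijective}\}$ and is open. If $ev_x$ is bijective with both dimensions equal to $k$, lift a basis to $X_1,\ldots,X_k\in\cD$; by (a) they generate $\cD|_U$, and their values at $x$ are linearly independent in $T_xM$, remaining so on a smaller $U'$. On $U'$ we then have $k\le\dim D_y\le\dim\cD_y\le k$ (the middle inequality is surjectivity of $ev_y$ from \eqref{seq:distr}, the last is upper semicontinuity), pinning $\dim D_y = k$ on $U'$; so $\dim D$ is continuous at $x$ and $ev_y$ is bijective throughout $U'$. Conversely, if $\dim D_y$ is locally constant equal to $r$ at $x$, pick $X_1,\ldots,X_r\in\cD$ whose values are a basis of $D_x$; these values form a frame of $D$ on a neighbourhood, and any $Z\in\cD$ satisfies $Z(y)=\sum c_i(y)X_i(y)$ pointwise with smooth $c_i$ (completing the $X_i$ to a local frame of $TM$). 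A cut-off function supported in this neighbourhood and equal to $1$ near $x$ shows $[Z]_x = \sum c_i(x)[X_i]_x$ in $\cD_x$, so the $[X_i]_x$ form a basis and $ev_x$ is bijective. Density is a consequence of lower semicontinuity alone: in any open $V$, the maximum of $\dim D_y$ over $V$ is attained, the level set at that maximum is open and nonempty, and on it $\dim D$ is locally constant. Finally, the local frames just produced exhibit $D\subseteq T\mathcal C$ as a vector subbundle, and $ev$ identifies $\cD|_{\mathcal C}$ with the $C^\infty_c(\mathcal C)$-module of compactly supported sections of $D$, which is projective by Serre-Swan.

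The main obstacle I anticipate is the first step of (a): translating the abstract membership $Y_j-\sum_i a_{ji}X_i \in I_x\cD$ — which a priori involves arbitrary vector fields in $\cD$ with vanishing-at-$x$ coefficients — into a single finite matrix identity amenable to Neumann inversion. Forcing everything back onto the fixed local generators $Y_1,\ldots,Y_m$ is what collapses the problem to linear algebra; the rest of the proposition is then semicontinuity, continuity of rank, and Serre-Swan.
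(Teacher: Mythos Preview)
Your proof is correct, and since the paper does not actually prove this proposition but simply refers to \cite[Prop.~1.5]{AS1} (noting that the argument there uses no Lie brackets), your Nakayama-style argument for (a) and the semicontinuity/Serre--Swan reasoning for (b) and (c) are precisely the standard approach being invoked. There is nothing to add.
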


\begin{remark}
Note that in item c) of examples \ref{exs:fibcalc} the set of continuity is the complement of the $y$-axis in $\R^2$, so it has two connected components. In this case, the vector bundle $D$ mentioned in Prop. \ref{prop:distr} has rank $1$ on the component with negative $x$-coordinate and rank $2$ on the component with positive $x$-coordinate.
\end{remark}

%\ycomment{I removed the rest partially moving it to page 17, Section 3.1.}
%Last, in this sequel we will use the space $\cD^{\ast} = \bigcup_{x \in M}\cD_x^{\ast}$, where $\cD_x^{\ast}$ is the dual of the fiber $\cD_x$. Recall that in \cite[Prop. 2.10]{AS2} it was shown that $\cD^{\ast}$ is a locally compact space. Its topology (\cf \cite[\S 2.2]{AS2}) is the smallest topology which makes the following maps continuous:
%\begin{itemize}
%\item $p : \cD^{\ast} \to M$ is the projection $p(x,\xi)=x$.
%\item For every $X \in \cD$ the map $q_X : \cD^{\ast} \to \R$ with $q_X(x,\xi)=\langle \xi,X(x) \rangle$.
%\end{itemize}
%\begin{proof}
%\tobedone
%\end{proof}

\subsection{Local presentations}

Distributions which arise from anchored vector bundles are quite convenient; the anchored vector bundle plays the role of coordinates for the distribution. We localise this idea in the following definition.

\begin{definition}\label{dfn:presentation}
Let $(M,\cD)$ be a distribution and $U$ an open subset of $M$. 
\begin{enumerate}
\item A \emph{local presentation} of $(M,\cD)$ over $U$ is an anchored vector bundle $\rho_U : E_U \to TM$ (note that $E_U$ is a vector bundle over $U$), over the inclusion map $\iota_U : U \to M$, such that 
\[ 
\rho_U(\Gamma_c E_U) = \cD\left|_U\right..
\]
%\ycomment{Now it is wrong. We should put $\Gamma_c E_U$, I think}
Once the distribution $(M,\cD)$ is fixed, a local presentation as such is denoted $(E_U,\rho_U)$. 
\item Let $W$ be an open subset of $U$. A morphism of local presentations from $(E_U,\rho_U)$ to $(E_W,\rho_W)$ is a vector bundles morphism $\psi : E_U \left|_W\right. \to E_W$ (over the identity) such that $\rho_W \circ \psi = \rho_U$. A morphism of local presentations from $(E_W,\rho_W)$ to $(E_U,\rho_U)$ is a vector bundles morphism $\phi : E_W \to E_U$ over the inclusion $\iota : W \hookrightarrow U$ such that $\rho_U \circ \phi = \rho_W$.
\item We say that a family of local presentations $\{(E_{U_i},\rho_{U_i})\}_{i \in I}$ covers $(M,\cD)$ if $\cup_{i \in I}U_i = M$.
\end{enumerate}
\end{definition}
Here are some immediate properties of a local presentation $(E_U,\rho_U)$:
\begin{enumerate}
\item When $U = M$ a \textit{presentation} of $(M,\cD)$ in terms of definition \ref{dfn:presentation} is  a vector bundle $E \to M$ together with a morphism of vector bundles $\rho : E \to TM$ over the identity. (Recall that sub-Riemannian manifolds come with a presentation as such by definition.)
\item Let $x \in U$. %\ycomment{Here there was certain repetition. I suggest to remove it.} 
As in Examples~\ref{exs:fibcalc},
%By linearity we have $\rho_U(I_{x}\Gamma E_U) = I_x\cD\left|_U\right.$, therefore $\rho_U$ induces a linear map $(\rho_U)_x : \Gamma E /I_x \Gamma E \to \cD\left|_U\right. /I_x\cD\left|_U\right.$ which is onto. From the Serre-Swan theorem we have that the fiber $(E_U)_x$ is isomorphic to $\Gamma E /I_x \Gamma E$, so 
we get a linear epimorphism
\[
\widehat{\rho}_{U,x} : (E_U)_x \to \cD_x.
\]
%\item
%\icomment{In view of your comment above, I'm uniting two items here.} 
Composing $\widehat{\rho}_{U,x}$ with the evaluation map we recover the restriction of $\rho_U$ to the fiber $(E_U)_x$. This is a linear epimorphism $\rho_{U,x} : (E_U)_x \to D_x$. Whence the following diagram commutes:
\begin{equation}\label{diag:trian}
\xymatrix{
(E_U)_x \ar@{->>}[r]^{\widehat{\rho}_{U,x}} \ar@{->>}[rd]_{\rho_{U,x}} & \cD_x \ar@{->>}[d]^{ev_x} \\
& D_x
}
\end{equation}
\end{enumerate}
Now let us show the existence of local presentations as such for any distribution.

\subsubsection{Minimal local presentations}\label{sec:minlocpre}

\begin{definition}\label{dfn:minchart}
Let $(M,\cD)$ be a distribution and $x$ a point of $M$. A local presentation $(E_U,\rho_U)$ of $(M,\cD)$ over a neighborhood $U$ of $x$ is called a \emph{minimal local presentation at $x$}, if the linear epimorphism $\widehat{\rho}_{U,x} : (E_U)_x \to \cD_x$ is an isomorphism.
\end{definition}

One can construct a minimal local presentation $(E_U,\rho_U)$ at $x \in M$ by the following recipe: 
\begin{itemize}
\item Consider the vector space $\cD_x$ and let $k \in \N$ be its dimension.
\item Choose a basis $\{[X_1]_x,\ldots,[X_k]_x\}$ of $\cD_x$. Also choose representatives $X_1,\ldots,X_k$ of the elements of this basis in $\cD$.
\item Take the neighborhood $U$ of $x$ to be the one for which it is proven in Proposition \ref{prop:distr} that $X_1\left|_U\right.,\ldots,X_k\left|_U\right.$ generate $\cD\left|_U\right.$.
\item Put $E_U$ the trivial bundle $U \times \R^k$.
\item Put $\rho_U : E_U \to TM$ the map $\rho_U(y,\lambda_1,\ldots,\lambda_k)=\lambda_1 X_1(y) + \ldots + \lambda_k X_k(y)$.
\item Obviously, at the level of sections we obtain a map $\rho_U : C^{\infty}_c(U)^k \to \cD\left|_U\right.$  defined by 
\[
\rho_U(f_1,\ldots,f_k)=f_1 \cdot X_1\left|_U\right. + \ldots + f_k \cdot X_k\left|_U\right.
\]
Whence $\rho_U(\Gamma_c E_U)=\cD\left|_U\right.$.
\end{itemize}

\begin{remarks}\label{rem:minchart}
\begin{enumerate}
\item Note that the local presentation $(E_U,\rho_U)$ we just constructed is not unique. It depends on the choice of basis for $\cD_x$, as well as the choice of representatives of elements of this basis. 
\item If we start from a point $x' \neq x$ the dimension of the bundle $E_{U'}$ might be different from the dimension of $E_U$ because in general $\dim(\cD_x) \neq \dim(\cD_{x'})$.
\item Of course one could just start with an arbitrary choice of generators for $\cD\left|_U\right.$ and construct a local presentation with the same recipe. But the dimension of the bundle $E_U$ we construct starting from a basis of $\cD_x$ is minimal. 
\item If we start with a different basis $\{[X'_1]_x,\ldots,[X'_k]_x\}$ of $\cD_x$, then, shrinking the neighborhood $U$ if necessary, the local presentation arising from the above construction will differ from $(E_U,\rho_U)$ only with respect to the anchor map. Namely, it will be the pair $(E_U,\rho'_U)$, where $E_U = U \times \R^k$ and $\rho'_U(y,\lambda_1,\ldots,\lambda_k) = \lambda_1 X'_1(y) + \ldots + \lambda_k X'_k(y)$ (\cf Proposition~\ref{propdef:equivmin1} below).
\end{enumerate}
\end{remarks}

\begin{ex}\label{ex:badminchart}
Let us give the minimal local presentations for item e) in examples \ref{exs:distr}. Let us start with a point $p$ on the $y$-axis of $\R^2$, for instance $p=(0,1)$. Since $\dim(\cD_p)=2$ there is an open neighbourhood $U_p$ of $p$ such that $\cD\left|_{U_p}\right.$ is generated by $\partial_x$ and $f\partial_y$. Put $E_{U_p}$ the trivial bundle $U_p \times \R^2$ and define $\rho_{U_p} : E_{U_p} \to T\R^2$ by $\rho_{U_p}(q,\lambda,\mu)=(q,\lambda\partial_x(q) + \mu f(q)\partial_y(q))$ for every $q \in U_p$ and $(\lambda,\mu) \in \R^2$.

Now let $p^{+}$ a point in $\R^2$ which lies to the right of the $y$-axis, namely its first coordinate is strictly positive. Since $\dim(\cD_{p^{+}})=2$, there is an open neighbourhood $U_{p^{+}}$ of $p^{+}$ such that $\cD\left|_{U_{p^{+}}}\right.$ is generated by $\partial_x$ and $\partial_y$. Put $E_{U_{p^{+}}}$ the trivial bundle $U_{p^{+}} \times \R^2$ and define $\rho_{U_{p^{+}}} : E_{U_{p^{+}}} \to T\R^2$ by $\rho_{U_{p^{+}}}(q,\lambda,\mu)=(q,\lambda\partial_x(q) + \mu \partial_y(q))$.

Now let $p^{-}$ a point in $\R^2$ which lies to the rleft of the $y$-axis, namely its first coordinate is strictly negative. Since $\dim(\cD_{p^{-}})=1$, there is an open neighbourhood $U_{p^{-}}$ of $p^{-}$ such that $\cD\left|_{U_{p^{-}}}\right.$ is generated by $\partial_x$. Put $E_{U_{p^{-}}}$ the trivial bundle $U_{p^{-}} \times \R$ and define $\rho_{U_{p^{-}}} : E_{U_{p^{-}}} \to T\R^2$ by $\rho_{U_{p^{-}}}(q,\lambda)=(q,\lambda\partial_x(q))$.
\end{ex}

\subsection{Equivalence of local presentations}\label{sec:equivpres}

Notice that in Example \ref{ex:badminchart}, the points $p^{-}, p^{+}$ may lie in the neighbourhood $U_p$. In this case the neighbourhoods $U_{p^{-}}$ and $U_{p^{+}}$ will have non-trivial intersections with $U_p$. This creates an ambiguity regarding the choice of minimal local presentation. Ambiguities as such are bound to arise in all cases, and not only for minimal local presentations. To deal with them we introduce a notion of equivalence for general local presentations.

%In order to handle the various choices in the construction of minimal local presentations, we need a notion of equivalence. For the purpose of introducing and studying the Laplacian, the following, quite local definition of equivalence is sufficient.
\begin{definition}\label{dfn:equiv}
Let $(M,\cD)$ be a distribution and $U, V$ open subsets of $M$ such that $U \cap V \neq \emptyset$. Two local presentations $(E_U,\rho_U)$ and $(E_V,\rho_V)$ are called \textit{equivalent at a point $x \in U \cap V$}, if there exist an open neighbourhood  $W$ of $x$ such that $W \subset U \cap V$, a local presentation $(E_W,\rho_W)$ and morphisms of local presentations $\phi_{W,U} : (E_W,\rho_W) \to (E_U,\rho_U)$ and $\phi_{W,V} : (E_W,\rho_W) \to (E_V,\rho_V)$ such that $\rho_U\left|_W\right. \circ \phi_{W,U}=\rho_W = \rho_V\left|_W\right. \circ \phi_{W,V}$. 
\end{definition}
In other words, the following diagram commutes:
\begin{eqnarray}\label{eqn:equiv}
\xymatrix{
& E_U  \ar[rd]|-{\rho_{U}} & \\
E_W \ar[ru]|-{\phi_{W,U}} \ar[rd]|-{\phi_{W,V}} \ar[rr]|-{\rho_W} &  & TM \\
& E_V \ar[ru]|-{\rho_{V}} & \\
}
\end{eqnarray}
At the level of sections we have the following commutative diagram:
\begin{eqnarray}\label{eqn:equivmod}
\xymatrix{
& \Gamma E_U \ar[r]|-{\rho_{U}} & \cD\left|_U\right. \ar@{^{(}->}[rd]|-{\iota_{U,W}} & \\
\Gamma E_W \ar[ru]|-{\phi_{W,U}} \ar[rd]|-{\phi_{W,V}} \ar[rrr]|-{\rho_W} & & & \cD\left|_W\right. \\
& \Gamma E_V \ar[r]|-{\rho_{V}} & \cD\left|_V\right. \ar@{^{(}->}[ru]|-{\iota_{V,W}} &
}
\end{eqnarray}
where $\iota_{U,W} : \cD\left|_U\right. \to \cD\left|_W\right.$ is the restriction map $X\left|_U\right. \mapsto X\left|_W\right.$. It is easy to see that $\iota_{U,W} \circ \iota_{W,Z} = \iota_{U,Z}$ for appropriate open sets $U, W, Z$ of $M$.

\begin{lemma}
The relation introduced in Definition \ref{dfn:equiv} is an equivalence relation.
\end{lemma}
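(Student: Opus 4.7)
I verify the three axioms in turn. \emph{Reflexivity} is immediate: for $x \in U$ take $W = U$, $(E_W, \rho_W) = (E_U, \rho_U)$, and let both morphisms in the diagram \eqref{eqn:equiv} be the identity. \emph{Symmetry} follows from the observation that Definition \ref{dfn:equiv} is manifestly invariant under the swap $(E_U,\rho_U) \leftrightarrow (E_V,\rho_V)$. The entire content of the lemma is therefore \emph{transitivity}.

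Assume $(E_U,\rho_U) \sim (E_V,\rho_V)$ at $x$ via $(E_{W_1},\rho_{W_1})$ with $W_1 \subset U \cap V$, and $(E_V,\rho_V) \sim (E_Z,\rho_Z)$ at $x$ via $(E_{W_2},\rho_{W_2})$ with $W_2 \subset V \cap Z$. The two given intermediaries $E_{W_1}$ and $E_{W_2}$ may have unrelated ranks and are only coupled through the middle presentation $(E_V, \rho_V)$, so one cannot simply shrink to $W_1 \cap W_2$ and reuse $(E_{W_1}|_{W_1 \cap W_2}, \rho_{W_1}|_{W_1 \cap W_2})$: no morphism from it into $(E_Z, \rho_Z)$ is a priori available. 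Instead I will introduce a fresh intermediary, namely a minimal local presentation at $x$, and construct morphisms from it to both endpoints $(E_U,\rho_U)$ and $(E_Z,\rho_Z)$.

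Following the recipe of \S\ref{sec:minlocpre}, pick $X_1, \dots, X_k \in \cD$ whose classes form a basis of $\cD_x$. By Proposition \ref{prop:distr} there is an open neighbourhood $O$ of $x$ on which the $X_i$ generate $\cD\left|_O\right.$; shrink $O$ so that $\overline O$ is compact and contained in $W_1 \cap W_2 \subset U \cap Z$. Put $F_O := O \times \R^k$ and $\sigma_O(y, \lambda) := \sum_i \lambda_i X_i(y)$. It now suffices to exhibit morphisms of local presentations $\psi_U \colon (F_O, \sigma_O) \to (E_U, \rho_U)$ and $\psi_Z \colon (F_O, \sigma_O) \to (E_Z, \rho_Z)$, since $(F_O, \sigma_O)$ will then witness the equivalence $(E_U, \rho_U) \sim (E_Z, \rho_Z)$ at $x$ through diagram \eqref{eqn:equiv}.

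To construct $\psi_U$, choose a cutoff $f \in C_c^{\infty}(U)$ with $f \equiv 1$ on $\overline O$ and set $\tilde X_i := f \cdot (X_i\left|_U\right.) \in \cD\left|_U\right.$; by construction $\tilde X_i\left|_O\right. = X_i\left|_O\right.$. The defining identity $\rho_U(\Gamma_c E_U) = \cD\left|_U\right.$ yields sections $t_i \in \Gamma_c E_U$ with $\rho_U(t_i) = \tilde X_i$, and the formula $\psi_U(y, \lambda) := \sum_i \lambda_i t_i(y)$ for $y \in O$ defines a vector bundle morphism $F_O \to E_U$ over $O \hookrightarrow U$ that satisfies $\rho_U \circ \psi_U = \sigma_O$ (verify on the trivial frame and extend by $C^{\infty}(O)$-linearity). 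The morphism $\psi_Z$ is built identically, replacing $U$ with $Z$. The main technical obstacle is precisely this lifting step: the cutoff $f$ is the trick needed to produce an element of $\cD\left|_U\right.$ agreeing with $X_i$ on $O$, since the restrictions $X_i\left|_U\right.$ of the global generators need not themselves be compactly supported inside $U$.
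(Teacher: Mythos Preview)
Your proof is correct, but it takes a genuinely different route from the paper's.

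The paper proves transitivity by a pullback construction: given intermediaries $(E_{W},\rho_{W})$ (with maps into $E_U$ and $E_V$) and $(E_{W'},\rho_{W'})$ (with maps into $E_V$ and $E_Z$), it forms the fibered product $E_{W\cap W'} := E_W \times_{(\phi_{W,V},\,\phi_{W',V})} E_{W'}$ over $W\cap W'$, declares $\rho_{W\cap W'}(e,e') = \rho_V(\phi_{W,V}(e))$, and uses the composites $\phi_{W,U}\circ p_1$ and $\phi_{W',Z}\circ p_2$ as the required morphisms into $E_U$ and $E_Z$. That is, it couples the two given witnesses through their common map into $E_V$.

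Your argument ignores the given intermediaries entirely. You produce a \emph{minimal} local presentation $(F_O,\sigma_O)$ at $x$ and lift its trivial frame through the surjections $\rho_U:\Gamma_c E_U \to \cD|_U$ and $\rho_Z:\Gamma_c E_Z \to \cD|_Z$ (using a cutoff to land inside the compactly supported sections). In effect you are proving, directly, that any local presentation near $x$ receives a morphism from a fixed minimal one --- essentially the content of Propositions~\ref{propdef:equivmin}--\ref{prop:equivminlocpr}, which in the paper come \emph{after} this lemma. So your proof of transitivity already contains the stronger fact that any two local presentations are equivalent at $x$, making the transitivity hypothesis superfluous. The paper's pullback argument is more categorical and economical with hypotheses, while yours is more elementary and constructive and avoids the (glossed-over) verification that the fibered product is a local presentation of constant rank.
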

\begin{proof}
We just need to examine transitivity. Let $U, V, Z$ open subsets of $M$ such that $U \cap V \cap Z \neq \emptyset$ and $x \in U \cap V \cap Z$. Assume the local presentations $(E_U, \rho_U)$, $(E_V, \rho_V)$ are equivalent at the point $x$, and the same for $(E_V, \rho_V)$, $(E_Z, \rho_Z)$. Suppose these equivalences are realized by open neighborhoods $W$ of $x$ in $U \cap V$ and $W'$ of $x$ in $V \cap Z$, with respective local presentations $(E_W,\rho_W)$ and $(E_{W'},\rho_{W'})$.

Now consider the pullback vector bundle $E_{W\cap W'} := E_W \times_{(\phi_{W,V},\phi_{W',V})} E_{W'}$ over $W \cap W'$. Define $\rho_{W\cap W'} : E_{W\cap W'} \to TM$ by $\rho_{W\cap W'}(e,e') := \rho_V(\phi_{W,V}(e)) = \rho_V(\phi_{W',V}(e'))$. It is easy to check that $(E_{W\cap W'}, \rho_{W\cap W'})$, together with the maps $\phi_{W,U} \circ p_1 : E_{W\cap W'} \to E_U$ and $\phi_{W',Z} \circ p_2 : E_{W\cap W'} \to E_Z$ give an equivalence between the local presentations $(E_U, \rho_U)$ and $(E_Z, \rho_Z)$ at the point $x \in U \cap Z$.
\end{proof}

Moreover, it is easy to see that if the local presentations $(E_U, \rho_U)$, $(E_V, \rho_V)$ are equivalent at every point of $U \cap V$ and $(E_V, \rho_V)$, $(E_Z, \rho_Z)$ are equivalent at every point of $V \cap Z$ then $(E_U, \rho_U)$, $(E_Z, \rho_Z)$ are equivalent at every point of $U \cap V\cap Z$.

%\subsection{Equivalence of minimal local presentations}\label{sec:equivminlocpr}

%\ycomment{These considerations have nothing to do with minimal representations. We just prove that any two local presentations are equivalent and, for this purpose, we use minimal local presentations. Below, I was trying to rewrite this subsection.} 

Now we prove that any two local presentations $(E_U,\rho_U)$ and $(E_V,\rho_V)$ with $U \cap V \neq \emptyset$ are equivalent at any point $x\in U \cap V$. For this purpose we use minimal local presentations. We will start with the following proposition.   

\begin{prop}\label{propdef:equivmin}
Let $x\in M$ and let $(E_U,\rho_U)$ be a local presentation defined in an open neighborhood $U$ of $x$. Then there exist a minimal local presentation $ (E_W,\rho_W)$ at $x$ defined in an open neighborhood $W\subset U$ of $x$ and a surjective morphism of local presentations $A_{U,W} : (E_U,\rho_U) \to (E_W,\rho_W)$. 
\end{prop}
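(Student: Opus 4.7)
The plan is to first build the minimal presentation $(E_W,\rho_W)$ from $(E_U,\rho_U)$ using Proposition~\ref{prop:distr}(a), and then construct $A_{U,W}$ as a matrix of smooth coefficients that express a frame of $E_U$ in terms of the generators chosen for $E_W$. After shrinking $U$, one may assume $E_U = U\times \R^r$ is trivial with standard frame $\tau_1,\ldots,\tau_r$; set $Y_j := \rho_U(\tau_j)\in\cX(U)$. For any cutoff $\chi\in C^\infty_c(U)$ with $\chi(x)=1$, the vector field $\chi Y_j = \rho_U(\chi\tau_j)$ is compactly supported in $U$ and, extended by zero, determines an element of $\cD$; a standard cutoff computation shows that its class in $\cD_x$ depends only on the germ of $Y_j$ at $x$, giving a well-defined element $[Y_j]_x\in\cD_x$. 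Since every element of $\cD\left|_U\right.$ is a $C^\infty_c(U)$-combination of the $Y_j$'s, these classes span $\cD_x$. Setting $k := \dim \cD_x$ and reordering so that $[Y_1]_x,\ldots,[Y_k]_x$ is a basis, I fix one such $\chi$ and put $X_i := \chi Y_i \in \cD$; by Proposition~\ref{prop:distr}(a) there is a neighborhood $W\subset U$ of $x$ on which $X_1\left|_W\right.,\ldots,X_k\left|_W\right.$ generate $\cD\left|_W\right.$. This produces the minimal local presentation $(E_W,\rho_W)$ with $E_W = W\times \R^k$ and $\rho_W(y,\lambda_1,\ldots,\lambda_k) = \sum_i \lambda_i X_i(y)$.

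To construct $A_{U,W}$, I pick an auxiliary cutoff $\widetilde\chi\in C^\infty_c(U)$ supported in $W$ with $\widetilde\chi \equiv 1$ on a relatively compact open $V\Subset W$. Then $\widetilde\chi Y_j$ (extended by zero) lies in $\cD$, and its restriction to $W$ lies in $\cD\left|_W\right.$; by the generation property there exist $f_{ij}\in C^\infty_c(W)$ with $\widetilde\chi Y_j\left|_W\right. = \sum_i f_{ij}\, X_i\left|_W\right.$. Restricting to $V$, where $\widetilde\chi \equiv 1$, gives the pointwise equality $Y_j = \sum_i f_{ij} X_i$ on $V$. Renaming $V$ to $W$ and writing $a_{ij} := f_{ij}\left|_V\right. \in C^\infty(W)$, I define $A_{U,W} : E_U\left|_W\right. \to E_W$ on the standard frame of $E_U$ by $A_{U,W}(\tau_j(y)) := (a_{1j}(y),\ldots,a_{kj}(y))$ and extend $\R$-linearly on fibers. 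A direct computation using $Y_j = \sum_i a_{ij}X_i$ then yields $\rho_W\circ A_{U,W} = \rho_U\left|_W\right.$.

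Surjectivity is a rank argument. At $y = x$, the matrix $(a_{ij}(x))$ represents the linear map $\widehat\rho_{W,x}^{-1}\circ \widehat\rho_{U,x} : (E_U)_x\to(E_W)_x$ in the chosen frames, via the identity $[Y_j]_x = \sum_i a_{ij}(x)[X_i]_x$ in $\cD_x$. Since $\widehat\rho_{W,x}$ is an isomorphism (by minimality of $(E_W,\rho_W)$) and $\widehat\rho_{U,x}$ is surjective, this matrix has full rank $k$. The rank of a smooth matrix-valued function is lower semicontinuous, so after one more shrinking of $W$ the matrix has rank $k$ throughout $W$, giving surjectivity of $A_{U,W}$.

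The main subtlety is the bookkeeping needed to pass between algebra in the finite-dimensional fiber $\cD_x = \cD/I_x\cD$ and the structure of $\cD$ as a module of compactly supported vector fields on $M$. The vector fields $Y_j$ are only defined on $U$, so one has to use cutoff functions repeatedly to produce honest elements of $\cD$ with the correct germ at $x$; Proposition~\ref{prop:distr}(a) is then the essential input that converts a basis of $\cD_x$ into a set of local generators of $\cD\left|_W\right.$, which in turn allow the smooth decomposition $Y_j = \sum_i a_{ij} X_i$ on $W$ and hence the construction of $A_{U,W}$. Beyond this careful handling of supports and repeated shrinkings, no deeper difficulty arises.
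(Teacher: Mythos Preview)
Your proof is correct and follows essentially the same strategy as the paper: build a minimal local presentation from a basis of $\cD_x$, express the frame of $E_U$ in terms of these minimal generators via a smooth matrix of coefficients, and use lower semicontinuity of rank to obtain surjectivity after shrinking. The only cosmetic differences are that the paper swaps the roles of the letters $X_i$ and $Y_j$ relative to yours, and that you choose the basis of $\cD_x$ from among the classes $[Y_j]_x$ already produced by the frame of $E_U$, whereas the paper takes an arbitrary minimal generating set; your extra care with cutoffs to pass between $\cX(U)$ and $\cD$ is a point the paper leaves implicit.
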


\begin{proof}
Let $W\subset U$ be an open neighborhood of $x$ such that there exists a frame $\sigma_1,\ldots,\sigma_\ell$ of $E_U \left|_W\right.$ over $W$. So the restrictions of the vector fields $X_i = \rho(\sigma_i)$, $1 \leq i \leq \ell$, to $W$ generate the module $\cD\left|_W\right.$.

Let $k = \dim(\cD_x)$. There exist $Y_1,\ldots,Y_k \in \cD$ such that $Y_1\left|_W\right.,\ldots,Y_k\left|_W\right.$ generate $\cD\left|_W\right.$ (without loss of generality, we may assume with the same $W$!); put $(E_W = W \times \R^k,\rho_W)$ the associated minimal local presentation. 

Since $Y_1\left|_W\right.,\ldots,Y_k\left|_W\right.$ generate $\cD\left|_W\right.$, there exists a smooth map $A : W \to M_{\ell\times k}(\R)$ such that 
\[
\begin{bmatrix}
X_1(w) \\ 
\vdots \\
X_{\ell}(w)
\end{bmatrix}
= A(w) \cdot
\begin{bmatrix}
Y_1(w) \\
\vdots \\
Y_k(w)
\end{bmatrix}
\]
for every $w \in W$. Note that $A$ is not unique, since the $Y_i$'s  are merely generators of a module. This module may not be projective, whence they are not necessarily linearly independent.

Then, for every $w \in W$, we have
\[
\begin{bmatrix}
[X_1]_w \\ 
\vdots \\
[X_{\ell}]_w
\end{bmatrix}
= A(w) \cdot
\begin{bmatrix}
[Y_1]_w \\
\vdots \\
[Y_k]_w
\end{bmatrix},
\]
where $[X_i]_w$ and $[Y_j]_w$ are the classes of $X_i$ and $Y_j$ in $\cD_w$. Since the restrictions of $X_i$, $1 \leq i \leq \ell$, to $W$ generate the module $\cD\left|_W\right.$, the elements $[X_i]_w$, $1 \leq i \leq \ell$, generate the vector space $\cD_w$. Therefore, the rank of the matrix $A(w)$ is $\geq \dim \cD_w$. In particular, the rank of $A(x)$ is maximal and equals $k=\dim \cD_x$. Whence, shrinking $W$ if necessary, we can assume that the rank of $A(w)$ is maximal and equals $k$ for every $w\in W$. 

Observe that, since $[Y_1]_x, \ldots, [Y_k]_x$ is a basis in $\cD_x$, $A(x)$ is uniquely defined, that is, if $A, A' : W \to M_{\ell\times k}(\R)$ are as above, then we obtain 
\[
(A(x)-A'(x))
\begin{bmatrix}
[Y_1]_x \\
\vdots \\
[Y_k]_x
\end{bmatrix}
= 0,
\]
and, therefore, $A(x)=A'(x)$.

A matrix-valued map $A : W \to M_{\ell,k}(\R)$ introduced above gives rise to a surjective morphism of local presentations $A_{U,W} : (E_U,\rho_U) \to (E_W,\rho_W)$ defined by $A_{U,W}(w, \lambda) = (w,A(w)^t\cdot \lambda)$ for every $\lambda = (\lambda_1,\ldots,\lambda_\ell) \in \R^\ell$.
\end{proof}

%\icomment{t seems that the correct place for example \ref{ex:badA} is here. (It was at the end of this section.)}

\begin{ex}\label{ex:badA}
Let us apply the above to item e) in examples \ref{exs:distr}. With the notation of Ex. \ref{ex:badminchart}, choose $\{[\partial_x]_p, [f\partial_y]_p\}$ as a basis of $\cD_p = \R^2$, $\{[\partial_x]_{p^{-}}\}$ as a basis of $\cD_{p^{-}}=\R$ and $\{[\partial_x]_{p^{+}}, [f\partial_y]_{p^{+}}\}$ as a basis of $\cD_{p^{+}} = \R^2$. We can take $U_p$ to be $\R^2$, $U_{p^{-}}$ to be the half-plane to the left of the $y$-axis and $U_{p^{+}}$ to be the half-plane to the right of the $y$-axis. Then the map $A^{-} : U_{p^{-}} \to M_{1\times 2}(\R)$ is $A^{-}(w)=\left( \begin{smallmatrix} 1 & 0 \end{smallmatrix} \right)$ for all $w \in U_{p^{-}}$. The map $A^{+} : U_{p^{+}} \to M_{2 \times 2}(\R)$ is $A^{+}(w)=\left( \begin{smallmatrix} 1 & 0 \\  0 & 1 \end{smallmatrix} \right)$ for all $w \in U_{p^{+}}$.
\end{ex}

Slightly modifying the proof of Proposition~\ref{propdef:equivmin}, we get the following proposition. 

\begin{prop}\label{propdef:equivmin1}
Let $x\in M$ and $(E_U,\rho_U)$ and $(E_V,\rho_V)$ be minimal local presentations at $x$ defined in open neighborhoods $U$ and $V$ of $x$. Then there exist an open neighborhood $W\subset U\cap V$ of $x$ and an isomorphism of local presentations $(E_U \left|_W\right.,\rho_U) \cong (E_V \left|_W\right.,\rho_V)$. 
\end{prop}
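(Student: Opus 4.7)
The plan is to repeat the construction of Proposition~\ref{propdef:equivmin} but now with the additional input that the target presentation is itself minimal at $x$. Since both $(E_U,\rho_U)$ and $(E_V,\rho_V)$ are minimal at $x$, the bundles $E_U$ and $E_V$ both have rank $k=\dim\cD_x$ in a neighborhood of $x$, so the morphism produced by that construction will turn out to be a fiberwise isomorphism.

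First, I would choose frames $\sigma_1,\ldots,\sigma_k$ of $E_U$ and $\tau_1,\ldots,\tau_k$ of $E_V$ on a common open neighborhood of $x$ contained in $U\cap V$, and set $X_i=\rho_U(\sigma_i)$ and $Y_j=\rho_V(\tau_j)$. By minimality, both $\{[X_i]_x\}_{i=1}^k$ and $\{[Y_j]_x\}_{j=1}^k$ are bases of $\cD_x$, and Proposition~\ref{prop:distr}(a) guarantees that after shrinking to a neighborhood $W\subset U\cap V$ of $x$, each family generates $\cD|_W$.

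Next, exactly as in the proof of Proposition~\ref{propdef:equivmin}, I would use the fact that $Y_1|_W,\ldots,Y_k|_W$ generate $\cD|_W$ to write
\[
X_i = \sum_{j=1}^k A_{ij}\, Y_j \qquad \text{on } W,
\]
for some smooth matrix-valued function $A\colon W\to M_{k\times k}(\R)$. As in the cited proof, $A$ is not unique globally (the $Y_j$ need not be linearly independent as sections), but $A(x)$ is uniquely determined because $[Y_1]_x,\ldots,[Y_k]_x$ form a basis of $\cD_x$: passing to $\cD_x$ one obtains $[X_i]_x=\sum_j A_{ij}(x)[Y_j]_x$, which is a change-of-basis identity between two bases of $\cD_x$. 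In particular $A(x)$ is invertible, and by continuity we may shrink $W$ so that $A(w)$ is invertible for every $w\in W$. The map $\phi_{U,V}\colon E_U|_W\to E_V|_W$ defined fiberwise by the matrix $A(w)^{t}$ with respect to the chosen frames is then a vector bundle isomorphism, and by construction $\rho_V\circ\phi_{U,V}=\rho_U$ on $W$, so $\phi_{U,V}$ is a morphism of local presentations in the sense of Definition~\ref{dfn:presentation}(b).

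Finally, to upgrade $\phi_{U,V}$ to an isomorphism of local presentations, I would produce its inverse by the symmetric construction: write $Y_j=\sum_i B_{ji} X_i$ on $W$ to get a morphism $\phi_{V,U}\colon E_V|_W\to E_U|_W$ with $\rho_U\circ\phi_{V,U}=\rho_V$. Since on the fiber over $x$ we have the identities $A(x)B(x)=\Id$ and $B(x)A(x)=\Id$ by the uniqueness argument applied to each direction, a further shrinking of $W$ ensures $\phi_{U,V}\circ\phi_{V,U}=\Id$ and $\phi_{V,U}\circ\phi_{U,V}=\Id$ everywhere on $W$. The main subtlety is precisely this uniqueness-at-$x$ argument: the matrices $A$ and $B$ may be non-unique away from $x$ because the generating families need not be pointwise independent, and it is only the minimality hypothesis on both presentations at $x$ that forces $A(x)$ and $B(x)$ to be honest mutually inverse $k\times k$ matrices, which is then propagated to a neighborhood by continuity.
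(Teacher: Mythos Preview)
Your approach is correct and is exactly the ``slight modification'' of the proof of Proposition~\ref{propdef:equivmin} that the paper has in mind: with both presentations minimal at $x$, the matrix $A(x)$ is a change-of-basis matrix between two bases of $\cD_x$, hence invertible, and after shrinking $W$ the bundle map $\phi_{U,V}$ given by $A(w)^t$ is a vector bundle isomorphism intertwining the anchors. The proof is complete at the end of your second paragraph.

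Your third paragraph, however, is both unnecessary and contains a genuine error. It is unnecessary because once $\phi_{U,V}$ is a vector bundle isomorphism satisfying $\rho_V\circ\phi_{U,V}=\rho_U$, composing with $\phi_{U,V}^{-1}$ immediately gives $\rho_U\circ\phi_{U,V}^{-1}=\rho_V$, so $\phi_{U,V}^{-1}$ is automatically a morphism of local presentations and $\phi_{U,V}$ is already an isomorphism in the category. There is nothing to ``upgrade''. The error is in the claim that ``a further shrinking of $W$ ensures $\phi_{U,V}\circ\phi_{V,U}=\Id$ and $\phi_{V,U}\circ\phi_{U,V}=\Id$ everywhere on $W$''. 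Your uniqueness argument correctly gives $A(x)B(x)=\Id$ at the single point $x$, but continuity only yields $A(w)B(w)$ \emph{close} to $\Id$ for $w$ near $x$, not equal to it; since $B$ is chosen independently of $A$ and is non-unique away from $x$ (as you yourself note), there is no reason for $B$ to coincide with $A^{-1}$ on any neighborhood. Simply delete the last paragraph.
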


\begin{prop}\label{prop:equivminlocpr}
Suppose that $U, V$ are open subsets of $M$ such that $U \cap V \neq \emptyset$. Then any local presentations $(E_U,\rho_U)$ and $(E_V,\rho_V)$ are equivalent at every $x \in U\cap V$.
\end{prop}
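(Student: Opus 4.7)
The plan is to compare both local presentations to a common minimal local presentation at $x$, and then exploit the fact that the surjections produced by Proposition \ref{propdef:equivmin} admit vector bundle splittings. The one subtlety to watch is the direction of the arrows: Definition \ref{dfn:equiv} demands morphisms \emph{from} the common middle presentation \emph{to} $(E_U,\rho_U)$ and $(E_V,\rho_V)$, whereas Proposition \ref{propdef:equivmin} supplies maps pointing the other way.

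First, I would apply Proposition \ref{propdef:equivmin} twice. Applied to $(E_U,\rho_U)$ at $x$, it produces a minimal local presentation $(E_{W_1},\rho_{W_1})$ on a neighborhood $W_1 \subset U$ of $x$ together with a fiberwise surjective vector bundle morphism $A_{U,W_1} : E_U|_{W_1} \to E_{W_1}$ satisfying $\rho_{W_1} \circ A_{U,W_1} = \rho_U|_{W_1}$. Applied to $(E_V,\rho_V)$, it produces analogous data $W_2 \subset V$, $(E_{W_2},\rho_{W_2})$, $A_{V,W_2}$. By Proposition \ref{propdef:equivmin1}, after possibly shrinking I obtain an open neighborhood $W \subset W_1 \cap W_2$ of $x$ and an isomorphism of local presentations $\Psi : E_{W_1}|_W \xrightarrow{\cong} E_{W_2}|_W$, so that $\rho_{W_2} \circ \Psi = \rho_{W_1}|_W$. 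I then take the middle presentation of the equivalence to be $(E_W,\rho_W) := (E_{W_1}|_W,\rho_{W_1}|_W)$.

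Next I would reverse the direction of the two surjections. Since $A_{U,W_1}|_W : E_U|_W \to E_W$ is fiberwise surjective with constant rank, its kernel is a smooth subbundle of $E_U|_W$, and it admits a vector bundle section $\phi_{W,U} : E_W \to E_U|_W \hookrightarrow E_U$ — for example by equipping $E_U$ with a fiberwise inner product and mapping each fiber of $E_W$ isomorphically onto the orthogonal complement of $\ker A_{U,W_1}|_W$. Then
$$
\rho_U \circ \phi_{W,U} \;=\; \rho_{W_1}|_W \circ A_{U,W_1}|_W \circ \phi_{W,U} \;=\; \rho_{W_1}|_W \;=\; \rho_W,
$$
showing that $\phi_{W,U}$ is a morphism of local presentations $(E_W,\rho_W) \to (E_U,\rho_U)$ in the sense of Definition \ref{dfn:presentation}(b). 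Similarly I split $A_{V,W_2}|_W$ to get $\sigma_V : E_{W_2}|_W \to E_V$ with $\rho_V \circ \sigma_V = \rho_{W_2}|_W$, and set $\phi_{W,V} := \sigma_V \circ \Psi : E_W \to E_V$. The analogous computation, using $\rho_{W_2} \circ \Psi = \rho_W$, yields $\rho_V \circ \phi_{W,V} = \rho_W$. The triple $(E_W,\rho_W)$, $\phi_{W,U}$, $\phi_{W,V}$ therefore realises the equivalence at $x$ demanded by Definition \ref{dfn:equiv}.

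The only step that is more than bookkeeping is the splitting of the surjection $A_{U,W_1}|_W$ (and similarly $A_{V,W_2}|_W$), which is where the direction reversal actually happens. This is routine on a paracompact base — a surjective smooth vector bundle morphism with smooth kernel subbundle splits — but I would flag it as the one genuine use of vector bundle theory, on top of a chain of diagram chases that otherwise reduces the statement to Propositions \ref{propdef:equivmin} and \ref{propdef:equivmin1}.
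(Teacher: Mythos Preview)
Your proof is correct, but it takes a genuinely different route from the paper's. Both arguments begin identically: apply Proposition~\ref{propdef:equivmin} to each of $(E_U,\rho_U)$ and $(E_V,\rho_V)$ to obtain surjections onto minimal local presentations at $x$, then use Proposition~\ref{propdef:equivmin1} to identify those minimal presentations over a common neighborhood $W$. The divergence is in how the arrows are turned around. You take the minimal presentation $(E_W,\rho_W)$ itself as the middle object and \emph{split} the surjections $A_{U,W}$, $A_{V,W}$ via an auxiliary fiber metric to produce the required morphisms $\phi_{W,U}$, $\phi_{W,V}$. The paper instead forms the \emph{pullback} bundle $E=E_U|_W\times_{(A_{U,W},A_{V,W})}E_V|_W$, equips it with the obvious anchor, and uses the two projections as $\phi_U$, $\phi_V$; no splitting is needed because the projections already point the right way.

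Your approach has the virtue that the middle presentation is as small as possible (minimal at $x$), at the cost of a non-canonical choice of splitting. The paper's pullback construction is choice-free and, more to the point, is reused verbatim in the Riemannian setting (Lemma~\ref{lem:equivRiem}), where a separate linear-algebra lemma (Lemma~\ref{lem:Riemsubm2}) provides the inner product on the pullback making both projections Riemannian submersions. Your splitting argument would also adapt to that setting---taking orthogonal complements with respect to the given metric---but the pullback route keeps the two proofs structurally parallel.
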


\begin{proof}
By Proposition \ref{propdef:equivmin}, there exist a minimal local presentation $ (E_{W_1},\rho_{W_1})$ at $x$ defined in an open neighborhood $W_1\subset U$ of $x$ and a surjective morphism of local presentations $A_{U,{W_1}} : (E_U,\rho_U) \to (E_{W_1},\rho_{W_1})$. Similarly, there exist a minimal local presentation $ (E_{W_2},\rho_{W_2})$ at $x$ defined in an open neighborhood $W_2\subset V$ of $x$ and a surjective morphism of local presentations $A_{V,{W_2}} : (E_V,\rho_V) \to (E_{W_2},\rho_{W_2})$. By Proposition~\ref{propdef:equivmin1}, we can assume that $W_1=W_2=W$ and $(E_{W_1},\rho_{W_1})=(E_{W_2},\rho_{W_2})=(E_{W},\rho_{W})$ is a minimal local presentation at $x$.

Put $E \to W$ the pullback vector bundle $E_U \left|_W\right. \times_{(A_{U,W},A_{V,W})} E_V \left|_W\right.$. Consider the map $\rho : E \to TM$ defined by $\rho(e_U,e_V)=\rho_U(e_U)=\rho_V(e_V)$. It is easy to see that $(E,\rho)$ is a local presentation of $(M,\cD)$ (over $W$), albeit not a minimal one. Put $\phi_U : E \to E_U \left|_W\right.$ and $\phi_V : E \to E_V \left|_W\right.$ the projection maps. We obtain a commutative diagram \eqref{eqn:equiv}.
\end{proof}

%\ycomment{It looks that the set of all local representations can be considered as a (maximal) atlas for the distribution. So, probably, we can introduce such a notion.} 

The results given in this section lead to a notion of atlas of local presentations for a smooth distribution. This will be discussed elsewhere.

\section{The Riemannian structure}

In this section we define the notion of Riemannian metric on a distribution $\cD$ and introduce a particular construction of a metric as such. This is necessary in order to associate a geometric Laplacian to a smooth distribution $\cD$ in \S \ref{sec:Lapl}. In Appendix \ref{app:isometry} we discuss isometries of distributions, using the notion of Riemannian metric we introduce here.

\subsection{Definition of Riemannian metric on a distribution}\label{sec:dfnRiem}

Here we will extend the classical definition of Riemannian structure on a vector bundle. So a Riemannian metric on a distribution $(M,\cD)$ needs to be defined on a family of pointwise linearizations of $\cD$, and must be smooth in some sense. The fibers $\cD_x = \cD / I_x \cD$ play the role of these linearizations, and the local presentations of $\cD$ can be used to make sense of this smoothness. But first we need the following, quite classical, facts:

\begin{enumerate}
\item Suppose that $(E, \langle \cdot, \cdot  \rangle_{E})$ and $(F, \langle \cdot, \cdot  \rangle_{F})$ are two (finite dimensional) Euclidean vector spaces with inner product and $A: E\to F$ is a linear epimorphism. Then we have the induced linear map $\bar A: E/\ker A \to F$, which is an isomorphism. 

The inner product $\langle \cdot, \cdot  \rangle_{E}$ induces an inner product $\langle \cdot, \cdot  \rangle_{E/\ker A}$ on $E/\ker A$, using the isomorphism $E/\ker A\cong (\ker A)^\bot$. 

We say that $A$ is a {\it Riemannian submersion}, if $\bar A$ preserves inner products:
\[
\langle \bar A u, \bar Av  \rangle_{F}= \langle u, v  \rangle_{E/\ker A}, \quad u,v \in E/\ker A. 
\] 

\item If $A: E\to F$ is a linear epimorphism and $\langle \cdot, \cdot  \rangle_{E}$ is an inner product on $E$, then there exists a unique inner product $\langle \cdot, \cdot  \rangle_F$ on $F$ such that $A : (E, \langle \cdot, \cdot  \rangle_{E})\to (F, \langle \cdot, \cdot  \rangle_{F})$ is a Riemannian submersion. This follows immediately from the fact that the induced map $\bar A: E/\ker A \to F$ is an isomorphism. The corresponding norm is given by 
\[
\|u\|_{F}= \|\bar A^{-1} u\|_{E/\ker A}=\inf \{\|w\|_E : w\in E, Aw=u\}, \quad u\in F. 
\] 
One sees easily that the norm $\|\cdot\|_F$ satisfies the parallelogram equality, whence it arises from an inner product $\langle \cdot,\cdot \rangle_F$.

\item If $(E, \langle \cdot, \cdot  \rangle_{E})$ and $(F, \langle \cdot, \cdot  \rangle_{F})$ are two Euclidean vector spaces and $A: E\to F$ is a linear epimorphism, then the adjoint $A^*: F \to E$ is a linear monomorphism. One can check that $A$ is a Riemannian submersion if and only if $A^*$ is an isometry, that is, preserves inner products:
\[
\langle A^* u, A^*v  \rangle_E= \langle u, v  \rangle_{F}, \quad \text{ for all } u,v \in F. 
\] 
\item Now let $(\cH,\langle \cdot, \cdot  \rangle_{\cH})$ be an infinite dimensional Hilbert space, $F$ a finite dimensional vector space and $A : \cH \to F$ a linear epimorphism. Since $A$ has finite rank, it is a compact map, whence for every $u \in F$ the infimum $\inf\{\|h\|_{\cH} : h\in \cH, Ah=u\}$ is attained at some $h \in \cH$. Put $\|u\|_{F}$ this infimum. Again, we find that $\|\cdot\|_F$ is a norm and it satisfies the rule of the paralellogram, whence it comes from an inner product $\langle \cdot,\cdot \rangle_F$. By construction, the map $A$ is a Riemannian submersion, that is, the induced map $\bar A: \mathcal H/\ker A \to F$ preserves the inner products.
\end{enumerate}

Now let us give the definition of a Riemannian metric. Its smoothness is formulated in terms of local presentations.

%\begin{definition}\label{dfn:metric}
%A Riemannian metric on a smooth distribution $\cD$ on $M$ is a family $\{\langle\cdot, \cdot\rangle_x, x\in M\}$ of Euclidean inner products on $\cD_x$, which is smooth in the following sense: for any $x\in M$, there exist an open neighborhood $U$ of $x$, a local presentation $\rho_U : E_U \to TM$ of $(M,\cD)$ over $U$ and a smooth family of inner products $\{\langle\cdot, \cdot\rangle_{(E_U)_y}, y\in U\}$ in the fibers of $E_U$ such that, for any $y\in U$, the linear epimorphism $(\rho_U)_y : (E_U)_y \to \cD_y$ is a Riemannian submersion.  
%\end{definition}

%\textcolor{blue}{A local presentation of a Riemannian metric on $\cD$ is defined to be a local presentation $\rho_U : E_U \to TM$ of $(M,\cD)$ over $U$ and a smooth family of inner products $\{\langle\cdot, \cdot\rangle_{(E_U)_y}, y\in U\}$ in the fibers of $E_U$ such that, for any $y\in U$, the linear epimorphism $(\rho_U)_y : (E_U)_y \to \cD_y$ is a Riemannian submersion. }

%\icomment{I propose we replace definition \ref{dfn:metric} and the blue text above with the following definition:}

\begin{definition}\label{dfn:metric}
Let $(M,\cD)$ be a smooth distribution.
\begin{enumerate}
\item A \emph{Euclidean inner product} on $\cD$ is a family $\langle\ ,\ \rangle_{\cD}=\{\langle\cdot, \cdot\rangle_x, x\in M\}$ such that for every $x \in M$, $\langle\cdot, \cdot\rangle_x$ is a Euclidean inner product on $\cD_x$.
\item A \emph{local presentation of $\langle\ ,\ \rangle_{\cD}$ at $x \in M$} is a local presentation $\rho_U : E_U \to TM$ of $(M,\cD)$ over an open neighborhood $U$ of $x$ and a smooth family of inner products $\{\langle\cdot, \cdot\rangle_{(E_U)_y}, y\in U\}$ in the fibers of $E_U$ such that, for any $y\in U$, the linear epimorphism $(\rho_U)_y : (E_U)_y \to \cD_y$ is a Riemannian submersion. 
\item A \emph{Riemannian metric} on $(M,\cD)$ is a Euclidean inner product $\langle\ ,\ \rangle_{\cD}$ which is smooth in the following sense: For every $x \in M$ there exists a local presentation of $\langle\ ,\ \rangle_{\cD}$ at $x \in M$. 
%exist an open neighborhood $U$ of $x$ and a local presentation $\rho_U : E_U \to TM$ of $\langle\ ,\ \rangle_{\cD}$. 
\end{enumerate}
\end{definition}

%\ycomment{Now I understand why my first definition doesn't work.} 

\begin{remark}\label{rem:justifRiem}
Given a Riemannian metric $\langle\ ,\ \rangle_{\cD}=\{\langle\cdot, \cdot\rangle_x, x\in M\}$ on $(M,\cD)$, one can define the pointwise inner product of two elements $X,Y\in \cD$ as a function $\langle X, Y \rangle_{\cD}$ on $M$ given by
\[
\langle X, Y \rangle_{\cD}(x)=\langle [X]_x, [Y]_x \rangle_x,\quad x\in M.
\]
It should be noted that this function is, in general, non-smooth as one can see from the following example. This justifies the use of local presentations in definition \ref{dfn:metric} to express smoothness. In other words, it does not suffice to use the function $\langle\ ,\ \rangle_{\cD}$ for the definition of a Riemannian metric on the distribution $(M,\cD)$.

Consider the smooth distribution $(\R,\cD)$ where $\cD$ is the $C^{\infty}_c(\R)$-module spanned by the vector field $X=\varphi(x)\partial_x$ with some function $\varphi\in C^\infty_c(\RR)$ such that $\varphi(x)=0$ for $|x|\geq 1$ and $\varphi(x)>0$ for $|x|<1$. Note that $\cD$ is indeed involutive. Then $\cD_x=\RR$ for $|x|\leq 1$ and $\cD_x=0$ for $|x|>1$. Define a Euclidean inner product on $\cD$, setting $\langle [X]_x,  [X]_x \rangle_x=1$ for $|x|\leq 1$ and $\langle [X]_x,  [X]_x \rangle_x=0$ for $|x|>1$. One can check that it is smooth in the sense of definition \ref{dfn:metric} and, therefore, is a Riemannian metric on $(M,\cD)$. On the other hand, the function $\langle X, X\rangle_{\cD}$ is discontinuous at $x=\pm 1$.
\end{remark}

%\ycomment{Here I just reorganized the text below as a lemma}
\begin{lemma}\label{lem:equivRiemmin}
For any $x\in M$, there exists a local presentation $(E_W,\rho_W)$ of the Riemannian metric on $\cD$ defined in an open neighborhood $W$ of $x$, which is minimal at $x$.
\end{lemma}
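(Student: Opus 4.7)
The plan is to construct the desired minimal local presentation by combining Definition~\ref{dfn:metric}~(c) with Proposition~\ref{propdef:equivmin}, and transferring the inner products along the surjective morphism of local presentations produced there.

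First, by Definition~\ref{dfn:metric}~(c), pick any local presentation $(E_U,\rho_U)$ of the Riemannian metric at $x$, equipped with a smooth family of inner products $\{\langle\cdot,\cdot\rangle_{(E_U)_y}\}_{y\in U}$ making every $(\rho_U)_y$ a Riemannian submersion. Next, apply Proposition~\ref{propdef:equivmin} to the local presentation $(E_U,\rho_U)$ at the point $x$: this yields a minimal local presentation $(E_W,\rho_W)$ at $x$, on a neighborhood $W\subset U$ of $x$, together with a surjective morphism of local presentations $A_{U,W}:(E_U|_W,\rho_U)\to (E_W,\rho_W)$ satisfying $\rho_W\circ A_{U,W}=\rho_U|_W$. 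Note that, possibly after shrinking $W$, the bundle morphism $A_{U,W}$ is fiberwise surjective everywhere on $W$ (this is the maximality of the rank of the matrix $A(y)$ established in the proof of Proposition~\ref{propdef:equivmin}); in particular, $\ker A_{U,W}$ is a smooth subbundle of $E_U|_W$ of constant rank.

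Now I endow $E_W$ with inner products by requiring $A_{U,W}$ to be a fiberwise Riemannian submersion. Concretely, for each $y\in W$, item (b) of the preliminary remarks on Euclidean vector spaces produces a unique inner product $\langle\cdot,\cdot\rangle_{(E_W)_y}$ such that $(A_{U,W})_y:(E_U)_y\to(E_W)_y$ is a Riemannian submersion. Smoothness of this family follows from the constant rank of $A_{U,W}$: the orthogonal complement $(\ker A_{U,W})^{\perp}\subset E_U|_W$ is a smooth subbundle, the restriction $A_{U,W}|_{(\ker A_{U,W})^{\perp}}$ is a smooth vector bundle isomorphism onto $E_W$, and the inner products on $E_W$ are the pullbacks of the smooth restriction of $\langle\cdot,\cdot\rangle_{E_U|_W}$ along its inverse. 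Equivalently, the adjoint $A_{U,W}^{*}:E_W\to E_U|_W$ is a smooth isometric embedding, and we have declared the inner product on $E_W$ to be the one that makes it so.

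It remains to verify that $(\rho_W)_y:(E_W)_y\to\cD_y$ is a Riemannian submersion for every $y\in W$. From $\rho_W\circ A_{U,W}=\rho_U|_W$ and the fiberwise surjectivity of $A_{U,W}$, the map $(\rho_W)_y$ is surjective. Using the adjoint characterisation of Riemannian submersions (item (c) of the preliminary remarks), $(\rho_U)_y^{*}=(A_{U,W})_y^{*}\circ(\rho_W)_y^{*}$ is an isometry by hypothesis, and $(A_{U,W})_y^{*}$ is an isometry by construction, so
\[
\|(\rho_W)_y^{*}u\|_{(E_W)_y}=\|(A_{U,W})_y^{*}(\rho_W)_y^{*}u\|_{(E_U)_y}=\|(\rho_U)_y^{*}u\|_{(E_U)_y}=\|u\|_{\cD_y}
\]
for all $u\in\cD_y$, whence $(\rho_W)_y^{*}$ is an isometry, i.e.\ $(\rho_W)_y$ is a Riemannian submersion.

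The only delicate point is the smoothness of the induced inner products on $E_W$, which is precisely why we needed the fiberwise surjectivity of $A_{U,W}$ throughout $W$ (guaranteeing constant rank of $\ker A_{U,W}$). Everything else is a routine transcription of the elementary facts about Riemannian submersions of Euclidean spaces recalled just before Definition~\ref{dfn:metric}.
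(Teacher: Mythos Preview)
Your proof is correct and follows essentially the same route as the paper: start from any local presentation of the metric at $x$, apply Proposition~\ref{propdef:equivmin} to get a minimal one together with the surjective morphism $A_{U,W}$, push the inner products forward along $A_{U,W}$ so that it becomes a fiberwise Riemannian submersion, and then use the adjoint characterisation to conclude that $(\rho_W)_y$ is a Riemannian submersion. The paper's proof is terser and simply invokes ``the recipe described in \S\ref{sec:dfnRiem}'' for the transfer of inner products; your explicit justification of the smoothness of the induced family on $E_W$ via the constant rank of $A_{U,W}$ fills in a detail the paper leaves implicit.
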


\begin{proof}
Let $(E_U,\rho_U)$ be a local presentation of the Riemannian metric defined in an open neighborhood $U$ of $x$. Then, by Proposition~\ref{propdef:equivmin}, there exist a minimal local presentation $(E_W,\rho_W)$ at $x$ defined in an open neighborhood $W\subset U$ of $x$ and a surjective morphism of local presentations $A_{U,W} : (E_U,\rho_U) \to (E_W,\rho_W)$. Using the recipe described in \S \ref{sec:dfnRiem} we obtain an inner product on $E_W$ so that, for any $y\in W$, $(\widehat{A}_{U,W})_y : (E_U)_y \to (E_W)_y$ is a Riemannian submersion. Since $\rho_W=\rho_U\left|_W\right. \circ A_{U,W}$, for any $y\in W$, the linear epimorphism $(\rho_W)_y : (E_W)_y \to \cD_y$ is a Riemannian submersion, and, therefore, $(E_W,\rho_W)$ is a local presentation of the Riemannian metric.
\end{proof}

\subsection{Construction of Riemannian metric}\label{sec:constRiem}

Here we prove Theorem A, namely the existence of Riemannian metrics for a distribution $(M,\cD)$ as in Dfn. \ref{dfn:metric}. Explicitely, we give a particular construction of a metric as such. This construction is not canonical, it depends on a certain choice; recall that the same happens with the familiar construction of a Riemannian metric for a smooth manifold. On the other hand, the (geometric) Laplacian we will construct in \S \ref{sec:Lapl} depends on the choice of Riemannian metric for $(M,\cD)$. Locally it is just a sum of squares. %\ycomment{I don't understand what you mean by global level. The construction of the Laplacian is local. To define the Laplacian in some open subset, it is sufficient to have the Riemannian metric defined in this subset. Also I think that the sum of squares operators are less known that the Laplace-Beltrami operators.} 

Since the module $\cD$ is locally finitely generated, there exists an at most countable, locally finite open cover $\{U_i\}_{i\in I}$ of $M$ and, for each $i\in I$, a finite number of vector fields $X^{(i)}_1,\ldots, X^{(i)}_{d_i}$ in $\cX_c(M)$, which generate the restriction $\cD\left|_{U_i}\right.$ of $\cD$ to $U_i$. Let $X_1, X_2, \ldots, X_N \in \cX_c(M)$ be the union of all families $X^{(i)}_1,\ldots, X^{(i)}_{d_i}$ over $i\in I$. (Here $N$ might be infinite.) It is easy to see that the vector fields $X_1, X_2, \ldots, X_N \in \cX_c(M)$ are global generators of the module $\cD$.

Now, if $N$ is finite, put $E^N$ the trivial Euclidean bundle $M \times \R^N$; if $N = \infty$ consider the Hilbert space $\ell^2=\left\{\{x_j\}_{j=1}^{\infty} : x_j\in \R, \sum_{j=0}^\infty x_j^2<\infty\right\}$ and put $E^{N}$ the trivial Hilbert bundle $M \times \ell^2$. We also consider the linear map $\rho^N : \Gamma(E^N) \to \cD$ defined by $\rho^N(f_1,\ldots,f_N)=f_1 X_1 + \ldots + f_N X_N$. Notice that, when $N=\infty$, this sum is finite at each point, since the cover $\{U_i\}_{i\in I}$ is locally finite. 
As above, we get a linear epimorphism
\[
\widehat{\rho}_x : (E^N)_x \to \cD_x
\]
for every $x\in M$. Using the recipe described in \S \ref{sec:dfnRiem} we obtain an inner product $\langle\cdot, \cdot\rangle_x$ on $\cD_x$ so that $\widehat{\rho}_{x}$ is a Riemannian submersion.

Now we have to check that the family $\{\langle\cdot, \cdot\rangle_x, x\in M\}$ of Euclidean inner products on $\cD_x$ is smooth. Fix $x\in M$. Even if the bundle $E^N$ is infinite-dimensional, it can be considered as a local presentation of $(M,\cD)$ over $M$, so one can apply Proposion~\ref{propdef:equivmin} to it. By this proposition, we will get that there exist a minimal local presentation $ (E_U,\rho_U)$ at $x$ and a surjective morphism of local presentations $Z_U : (E^N,\rho^N) \to (E_U,\rho_U)$. 
Using the recipe described in \S \ref{sec:dfnRiem} we obtain an inner product on $E_U$ so that, for any $y\in U$, $(\widehat{Z}_U)_y : E^N_y \to (E_U)_y$ is a Riemannian submersion.

It remains to show that, for any $y\in U$, the linear epimorphism $(\widehat{\rho}_U)_y : (E_U)_y \to \cD_y$ is a Riemannian submersion.   
Observe that the following diagram of linear maps commutes: 
\begin{eqnarray}\label{diag:Riemsubm1}
\xymatrix{
& (E_U)_y  \ar[dd]|-{(\widehat{\rho}_U)_y} & \\
E^N_y \ar[ru]|-{(\hat{Z}_U)_y} \ar[rd]|-{\widehat{\rho}_{y}} \\
& \cD_y
}
\end{eqnarray}

Recall that $(\widehat{\rho}_U)_y : (E_U)_y \to \cD_y$ is a Riemannian submersion if and only if $(\widehat{\rho}_U)^{\ast}_y : \cD_y \to (E_U)_y $ is an isometry. Since $\widehat{\rho}_y : E^N_y \to \cD_y$ and $(\widehat{Z}_U)_y : E^N_y \to (E_U)_y$ are Riemannian submersions, their adjoints $\widehat{\rho}^{\ast}_y : \cD_y\to E^N_y $ and $(\widehat{Z}_U)^{\ast}_y : (E_U)_y\to E^N_y$ are isometries. Using the commutativity of diagram \eqref{diag:Riemsubm1}, one can easily check that $(\widehat{\rho}_U)^{\ast}_y$ is an isometry as well, and, therefore, $(\widehat{\rho}_U)_y$ is a Riemannian submersion.

\subsection{Equivalence of local presentations of a Riemannian metric}\label{sec:equivRiem}

%\ycomment{I suggest to put Definition here as below}
\begin{definition}
Let $U, V$ be open subsets of $M$ such that $U \cap V \neq \emptyset$ and $(E_U,\rho_U)$ and $(E_V,\rho_V)$ are local presentations of the Riemannian metric on $\cD$. We say that these local presentations are equivalent at a point $x \in U \cap V$, if there exist an open neighbourhood  $W$ of $x$ such that $W \subset U \cap V$, a local presentation $(E_W,\rho_W)$ of the Riemannian metric on $\cD$ and morphisms of local presentations $\phi_{W,U} : (E_W,\rho_W) \to (E_U,\rho_U)$ and $\phi_{W,V} : (E_W,\rho_W) \to (E_V,\rho_V)$, which are Riemannian submersions, such that $\rho_U\left|_W\right. \circ \phi_{W,U}=\rho_W = \rho_V\left|_W\right. \circ \phi_{W,V}$.
\end{definition}

\begin{lemma}\label{lem:equivRiem}
Let $U, V$ be open subsets of $M$ such that $U \cap V \neq \emptyset$. Any local presentations $(E_U,\rho_U)$ and $(E_V,\rho_V)$ of the Riemannian metric on $\cD$ are equivalent at any $x \in U \cap V$.
\end{lemma}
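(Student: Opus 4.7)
My plan is to mirror the proof of Proposition~\ref{prop:equivminlocpr}, now enhancing the pullback construction to carry a Riemannian structure.

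First, I would apply Lemma~\ref{lem:equivRiemmin} in turn to each of $(E_U, \rho_U)$ and $(E_V, \rho_V)$ to produce minimal local presentations of the Riemannian metric at $x$ on some common open neighborhood $W'\subset U\cap V$, each equipped with a surjective morphism of local presentations $A_U: E_U|_{W'} \to E_{W'}^{\min}$ and $A_V: E_V|_{W'} \to E_{W'}^{\min}$ that is a Riemannian submersion in every fiber. By Proposition~\ref{propdef:equivmin1} these two minimal local presentations are isomorphic over a smaller neighborhood; a fibrewise argument then shows that this isomorphism may in fact be chosen to be an isometry, by sending $(\ker\rho_{W^{\min}})^\perp$ in one to that in the other via their common isometric identification with $\cD_y$ and equipping the two copies of the kernel with a smooth isometric identification. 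After this identification one has a single minimal local presentation $(E_{W^{\min}},\rho_{W^{\min}})$ with both $A_U$ and $A_V$ Riemannian submersions.

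Second, imitating Proposition~\ref{prop:equivminlocpr}, I would form the pullback bundle
$$E_W := E_U|_W \times_{(A_U, A_V)} E_V|_W$$
over a sufficiently small $W\subset W'$, with projection morphisms $\phi_{W,U}, \phi_{W,V}$ and the natural anchor $\rho_W(u,v) := \rho_U(u) = \rho_V(v)$. To put the correct inner product on $E_W$, I would decompose each fiber as the orthogonal direct sum
$$(E_W)_y = \bigl((\ker A_U)_y \times \{0\}\bigr) \oplus \Delta_y \oplus \bigl(\{0\} \times (\ker A_V)_y\bigr),$$
where $\Delta_y := \{ ((A_U)_y^*(e), (A_V)_y^*(e)) : e \in (E_{W^{\min}})_y \}$ is the diagonal subspace (which is canonically isometric to $(E_{W^{\min}})_y$ since $A_U^*$ and $A_V^*$ are isometric embeddings), each summand equipped with its inherited inner product. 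Using the identities $A_U A_U^* = A_V A_V^* = \mathrm{id}_{E_{W^{\min}}}$, one checks directly that $\phi_{W,U}$ and $\phi_{W,V}$ restrict to fiberwise isometries on the orthogonal complements of their kernels, hence are Riemannian submersions. Since $\rho_U$ is a Riemannian submersion and $\rho_W = \rho_U\circ\phi_{W,U}$, we then obtain that $\rho_W$ is too, so $(E_W,\rho_W)$ is a local presentation of the Riemannian metric.

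The main obstacle is the smoothness in the first step: the subspaces $\ker\rho_{W^{\min},y}$ have non-constant rank (zero at $y=x$ but possibly positive at nearby points where $\dim\cD_y<\dim\cD_x$), so the pointwise isometric identification of the two inner products must be assembled into a smooth bundle isomorphism. This I would resolve by writing the identification in a local frame via a formula of the type $G_2^{-1/2}G_1^{1/2}$ adapted to the $\rho$-compatible splitting, which is smooth because the two quadratic forms are smoothly varying and positive-definite; the remaining verifications in the second step reduce to routine linear-algebraic identities.
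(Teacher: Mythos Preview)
Your overall strategy—reduce to a common minimal local presentation with Riemannian submersions from both sides, then take the fibered product and equip it with an inner product making the projections Riemannian submersions—is exactly the paper's. Your second step is essentially the content of the paper's Lemma~\ref{lem:Riemsubm2}, just inlined; the orthogonal decomposition you write down is the same one used there.

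The difference lies in the first step, and here the paper is simpler. Rather than producing \emph{two} minimal local presentations of the Riemannian metric (one from $E_U$ and one from $E_V$) and then trying to identify them by a smooth isometry—which is precisely your ``main obstacle''—the paper fixes \emph{one} minimal local presentation $(E^0_W,\rho^0_W)$ of the Riemannian metric at $x$ from the outset (using Lemma~\ref{lem:equivRiemmin}), and then obtains morphisms $A_{U,W}\colon E_U\to E^0_W$ and $A_{V,W}\colon E_V\to E^0_W$ via Propositions~\ref{propdef:equivmin} and~\ref{propdef:equivmin1}. The Riemannian-submersion property of $A_{U,W}$ (and $A_{V,W}$) is then deduced directly from the factorisation $\widehat{\rho}_{U,y}=\widehat{\rho}^0_{W,y}\circ A_{U,W,y}$, using that both $\widehat{\rho}_{U,y}$ and $\widehat{\rho}^0_{W,y}$ are already Riemannian submersions onto $\cD_y$ (the same adjoint trick as at the end of \S\ref{sec:constRiem}). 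This completely sidesteps your obstacle: there is no need to compare two a priori different inner products on a minimal presentation, and in particular no need for the $G_2^{-1/2}G_1^{1/2}$ construction or any smoothness argument across the jumping-rank locus of $\ker\rho$. Your route would work, but it manufactures a difficulty that the paper's ordering of the argument avoids.
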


\begin{proof}
Let us first recall that, by Lemma~\ref{lem:equivRiemmin}, near an arbitrary point $x\in M$, the Riemannian metric on $\cD$ can be defined using a minimal local presentation $(E^0_W,\rho^0_W)$ at $x$; That is to say, $W$ is an open neighbourhood of $x$ in $U \cap V$ and $(E^0_W,\rho^0_W)$ is a local presentation of the Riemannian metric such that the rank of the vector bundle $E^0_W$ is equal to $\dim(\cD_x)$.

On the other hand, since $(E_U,\rho_U)$ and $(E_V,\rho_V)$ are local presentations of the Riemannian metric on $\cD$, for every $y \in U \cap V$, the maps $\widehat{\rho}_{U,y} \to \cD_y$ and $\widehat{\rho}_{V,y} \to \cD_y$ are Riemannian submersions.

Let us focus on $(E_U,\rho_U)$ for the moment: As shown in Proposition \ref{propdef:equivmin}, there exist an open neighborhood $\widetilde{W}$ of $x$ in $U\cap V$ and a minimal local presentation $(\widetilde{E}^0_{\widetilde{W}},\widetilde{\rho}^0_{\widetilde{W}})$ of $\cD$ at $x$, together with a morphism of local presentations $A_{U,\widetilde{W}} : E_U \to \widetilde{E}^0_{\widetilde{W}}$. Shrinking the neighborhoods $W$ and $\widetilde{W}$ if necessary and using proposition \ref{propdef:equivmin1}, we can assume that $(\widetilde{E}^0_{\widetilde{W}},\widetilde{\rho}^0_{\widetilde{W}})$ is the same as $(E^0_W,\rho^0_W)$. In view of this we denote the above morphism of local presentations by $A_{U,W} : E_U \to E^0_W$. At any $y \in W$ we have $\widehat{\rho}_{U,y} = \widehat{\rho}_{W,y} \circ A_{U,W,y}$. Since $\widehat{\rho}_{U,y}$ and $\widehat{\rho}_{W,y}$ are Riemannian submersions, an argument similar to the one at the end of \S \ref{sec:constRiem} shows that $A_{U,W,y}$ is also a Riemannian submersion. Likewise, starting from $(E_V,\rho_V)$ and shrinking the neighborhood $W$ if necessary, we find that the morphism of local presentations $A_{V,W} : E_V \to E^0_W$ is a Riemannian submersion.

Last, as in the proof of Prop. \ref{prop:equivminlocpr}, we put $E_W \to W$ the pullback vector bundle $E_U \left|_W\right. \times_{(A_{U,W},A_{V,W})} E_V \left|_W\right.$. Consider the map $\rho_W : E_W \to TM$ defined by $\rho(e_U,e_V)=\rho_U(e_U)=\rho_V(e_V)$. We obtain a commutative diagram \eqref{eqn:equiv}. The existence of the desired inner products in the fibers of $E_W$ follows from the linear algebra result proven in Lemma \ref{lem:Riemsubm2} below.
\end{proof}

\begin{lemma}\label{lem:Riemsubm2}
Let $A, B, X$ vector spaces with inner product and $\alpha : A \to X$, $\beta : B \to X$ Riemannian submersions. Then there exists an inner product on the pullback $C = A \times_{\alpha,\beta} B$ such that the projections $\pi_A : C \to A$ and $\pi_B : C \to B$ are Riemannian submersions.
\end{lemma}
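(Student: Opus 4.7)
My plan is to equip the pullback $C = A \times_{\alpha,\beta} B = \{(a,b) \in A \times B : \alpha(a) = \beta(b)\}$ with the explicit inner product
\begin{equation*}
\langle (a,b), (a',b') \rangle_C := \langle a, a' \rangle_A + \langle b, b' \rangle_B - \langle \alpha(a), \alpha(a') \rangle_X,
\end{equation*}
which is well-defined because $\alpha(a)=\beta(b)$ and $\alpha(a')=\beta(b')$. Bilinearity and symmetry are immediate, so the substantive point is positivity. I would rewrite the diagonal as
\begin{equation*}
\|(a,b)\|_C^2 = \bigl(\|a\|_A^2 - \|\alpha(a)\|_X^2\bigr) + \|b\|_B^2,
\end{equation*}
and then note that, since $\alpha$ is a Riemannian submersion, the adjoint $\alpha^{\ast}\colon X\to A$ is an isometry with $\alpha\alpha^{\ast}=\mathrm{id}_X$. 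Decomposing $a = (a - \alpha^{\ast}\alpha(a)) + \alpha^{\ast}\alpha(a)$ into orthogonal summands yields $\|a\|_A^2 \geq \|\alpha(a)\|_X^2$, with equality iff $a \in (\ker\alpha)^\perp$. Thus the first parenthesis is non-negative. If $\|(a,b)\|_C=0$ then $b=0$, which forces $\alpha(a)=\beta(b)=0$, and then $\|a\|_A=\|\alpha(a)\|_X=0$ gives $a=0$, so the form is positive definite.

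Having put an inner product on $C$, I would next verify the Riemannian submersion property for $\pi_A$ using the infimum characterisation recalled in \S \ref{sec:dfnRiem}. Fix $a\in A$ and compute
\begin{equation*}
\inf\{\|(a,b)\|_C^2 : (a,b)\in C\} = \|a\|_A^2 - \|\alpha(a)\|_X^2 + \inf\{\|b\|_B^2 : \beta(b)=\alpha(a)\}.
\end{equation*}
Because $\beta$ itself is a Riemannian submersion, the remaining infimum equals $\|\alpha(a)\|_X^2$ and is attained at $b=\beta^{\ast}(\alpha(a))$. The two copies of $\|\alpha(a)\|_X^2$ cancel, leaving $\inf\|(a,b)\|_C^2 = \|a\|_A^2$, which is exactly the defining property of a Riemannian submersion for $\pi_A$. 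The argument for $\pi_B$ is completely symmetric, since replacing $\alpha(a)$ by $\beta(b)$ in the defining formula yields the same expression.

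The main point requiring care is the positivity of the candidate inner product; everything else is a direct application of the definitions, because the formula is cooked up precisely so that the ``shared'' $X$-component is counted once rather than twice, matching the orthogonal decomposition $(a,b) = (a-\alpha^{\ast}\alpha(a),\, b-\beta^{\ast}\beta(b)) + (\alpha^{\ast}x, \beta^{\ast}x)$ with $x=\alpha(a)=\beta(b)$. As an alternative, one could declare the inner product on $C$ directly through the orthogonal splitting
\begin{equation*}
C = (\ker\alpha \oplus 0) \,\oplus\, (0 \oplus \ker\beta) \,\oplus\, \{(\alpha^{\ast}x,\beta^{\ast}x) : x\in X\},
\end{equation*}
using the inner products on $\ker\alpha$, $\ker\beta$ inherited from $A$, $B$ and the inner product on $X$ transported by $x\mapsto(\alpha^{\ast}x,\beta^{\ast}x)$; but the closed-form expression above is more efficient for the submersion computation.
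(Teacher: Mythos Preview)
Your proof is correct and yields precisely the same inner product as the paper's, though the presentations differ. You give the closed-form expression $\langle(a,b),(a',b')\rangle_C = \langle a,a'\rangle_A + \langle b,b'\rangle_B - \langle\alpha(a),\alpha(a')\rangle_X$ and verify the submersion property via the infimum characterisation, while the paper defines the inner product directly through the orthogonal decomposition $C \cong (\ker\alpha\times 0)\oplus(0\times\ker\beta)\oplus\{(a',b')\in(\ker\alpha)^\perp\times(\ker\beta)^\perp : \alpha(a')=\beta(b')\}$, norming the third summand by $\|(a',b')\|_3 = \|\alpha(a')\|_X$, and then checks the submersion property by examining $\pi_A$ on each summand. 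Your ``alternative'' at the end is exactly the paper's route. The closed-form formula makes the infimum computation clean and transparent; the decomposition makes positive-definiteness immediate. Either way one is computing $\|(a,b)\|_C^2 = \|a_0\|_A^2 + \|b_0\|_B^2 + \|x\|_X^2$, where $a_0,b_0$ are the kernel components and $x=\alpha(a)=\beta(b)$.
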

\begin{proof}
The pullback $C$ is isomorphic to the vector space
\begin{eqnarray}\label{dirsum}
(\ker\alpha \times 0)\oplus(0 \times \ker\beta)\oplus\{(a,b) \in (\ker\alpha)^{\perp} \times (\ker\beta)^{\perp} : \alpha(a)=\beta(b)\}
\end{eqnarray}
Notice that the first term of the direct sum \eqref{dirsum} can be identified with a vector subspace of $A$, so it inherits the inner product of $A$. Put $\|\cdot\|_1$ for the induced norm. Likewise for the second term, which is a vector subspace of $B$; put $\|\cdot\|_2$ for the induced norm. The third term is isomorphic to $X$. In this term we consider the norm $\|(a,b)\|_3 = \|\alpha(a)\|=\|\beta(b)\|$. On the space $C$ we consider the norm $\|((a,0),(0,b),(a',b'))\| = \left( \|(a,0)\|_1^2 + \|(0,b)\|_2^2 + \|(a',b')\|_3^2 \right)^{1/2}$. It is easy to see that, since the norms $\|\cdot\|_i$, $i=1,2,3$ come from inner products, so does the norm $\|\cdot\|$.

Now write $A = \ker\alpha \oplus 0 \oplus (\ker\alpha)^{\perp}$ and notice that the restriction of $\pi_A$ to each term of \eqref{dirsum} is the first projection. In particular, the canonical inner products on $\ker\alpha$ and $\ker\beta$ make the restriction to the first term an isometry and the restriction to the second term an obvious Riemannian submersion. For the third term, since $\alpha$ is a Riemannian submersion we have $\|a\|=\|\alpha(a)\|$ and it follows that the projection is also a Riemannian submersion.
\end{proof}

\section{The horizontal differential of a distribution 
%\textcolor{red}{and its dual}
and its adjoint}\label{sec:Lapl}

\subsection{The dual of a distribution}\label{sec:dual}

Given a smooth distribution $(M,\cD)$, denote $\cD^*$ the disjoint union of vector spaces $\bigsqcup_{x\in M}\cD^*_x$. Recall that in \cite[Prop. 2.10]{AS2}, it was shown that $\cD^{\ast}$ is a locally compact space. Its topology (\cf \cite[\S 2.2]{AS2}) is the smallest topology which makes the following maps continuous:
\begin{itemize}
\item $p : \cD^{\ast} \to M$ is the projection $p(x,\xi)=x$.
\item For every $X \in \cD$ the map $q_X : \cD^{\ast} \to \R$ with $q_X(x,\xi)=\langle \xi, [X]_x \rangle$.
\end{itemize}

First, with the help of local presentations, we make sense of the smooth sections of this family of vector spaces. To this end, let us fix some notation first. Consider a local presentation $(E_U,\rho_U)$. Dualizing diagram \eqref{diag:trian}, for any $x\in U$, we obtain the commutative diagram: %Then we have a commutative diagram
%\begin{eqnarray}\label{diag:trian}
%\xymatrix{
% & \cD_x \ar[d]^{ev_x} \\ 
%E_{U,x} \ar[r]_{\rho_{U,x}} \ar[ru]|-{\widehat{\rho}_{U,x}} & T_x M
%}
%\end{eqnarray}
%Here the map $ev_x$ is the evaluation map discussed in \eqref{seq:distr}. The map $\widehat{\rho}_{U,x} : E_{U,x} \to \cD_x$ is defined as follows: By definition, the map $\rho_{U}$ induces a morphism of $C^{\infty}(M)$-modules $\rho_U : \Gamma_c E_U \to \cD$ which is onto and satisfies $\rho_U(I_x \Gamma_c E_U)\subseteq I_x\cD$. Therefore it induces a morphism of vector spaces $\widehat{\rho}_{U,x} : \Gamma_c E_U / I_x \Gamma_c E_U \to \cD / I_x\cD$. In other words, a linear map $\widehat{\rho}_{U,x} : E_{U,x} \to \cD_x$. This map is onto.

%Dualizing diagram \ref{diag:trian} we obtain the commutative diagram:
\begin{eqnarray}\label{diag:trian*}
\xymatrix{
 & \cD^{\ast}_x \ar[d]^{\widehat{\rho}^{\ast}_{U,x}} \\ 
T^{\ast}_x M \ar[r]_{\rho_{U,x}^{\ast}} \ar[ru]|-{ev^{\ast}_x} & E^{\ast}_{U,x} 
}
\end{eqnarray}
Note that, since $\widehat{\rho}_{U,x}$ is surjective, its dual map $\widehat{\rho}^{\ast}_{U,x}$ is injective.

\begin{definition}\label{dfn:smoothdual}
Let $\omega^{\ast}$ be a map $M \ni x \mapsto \omega^{\ast}(x) \in \cD^{\ast}_x$. We say that $\omega^{\ast}$ is a \emph{smooth} section of $\cD^{\ast}$ iff for every $x \in M$ there is a local presentation $(E_U,\rho_U)$ defined in a neighborhood of $x$ such that the section $\omega^{\ast}_U$ of the bundle $E_U^{\ast}$ defined by  $\omega^{\ast}_U(y)=\widehat{\rho}^{\ast}_{U,y} \circ \omega^{\ast}(y)$ for all $y \in U$ is smooth on $U$. We call $\omega^{\ast}_U$ a \emph{local realization} of $\omega^{\ast}$.
\end{definition}

\begin{notation}
We denote the set of smooth sections of $\cD^{\ast}$ by $C^{\infty}(M,\cD^{\ast})$ and its subset consisting of sections with compact support by $C^{\infty}_c(M,\cD^{\ast})$. Regarding the definition of the $C^{\infty}(M)$-module structure for $C^{\infty}_c(M,\cD^{\ast})$, it is $(f\cdot\omega^{\ast})(y) = f(y)\cdot \omega^{\ast}(y)$. Note that if $\omega^{\ast}_U$ is a local realization of $\omega^{\ast}$ then $f \left|_U\right. \cdot \omega^{\ast}_U$ is a local realization of $f\cdot \omega^{\ast}$.
\end{notation}

%\icomment{If we make sense of $C^{\infty}_c(M,\cD^{\ast})$ as a quotient, we get a quotient map $q_U : C^{\infty}_c(U,E_U^{\ast}) \to C^{\infty}_c(M,\cD^{\ast})$, so the ``global'' horizontal Laplacian is defined immediately from the local ones. But I don't know (yet) about its adjoint...}

\begin{ex}\label{ex:D*}
%\icomment{I removed the notation $\alpha^{\cD^{\ast}}$.}\ycomment{So you suggest to use the same notation for $\alpha$ and $ev^{\ast}(\alpha)$. Then we should say this explicitly. Otherwise, it is correct to write  $ev^{\ast}(\alpha)(x)([X]_x) = \alpha_x(X)$} 
Some elements of $C^{\infty}_c(M,\cD^{\ast})$ arise naturally from 1-forms on $M$ via the evaluation dual $ev^{\ast}$ in diagram \eqref{diag:trian*}. Namely we have a map $ev^{\ast} : \Omega^1_c(M) \to C^\infty_c(M,\cD^*)$: Every $\alpha \in \Omega^1_c(M)$ defines a map $M \ni x \mapsto ev^{\ast}(\alpha)(x) \in \cD_x^{\ast}$ by $ev^{\ast}(\alpha)(x)([X]_x) = \alpha_x(X)$ for every $X \in \cD$. Now, to show that $ev^{\ast}(\alpha)$ satisfies Definition \ref{dfn:smoothdual}, take an arbitrary local presentation $(E_U,\rho_U)$ and put $\alpha^{\ast}_{E_U} = \rho^{\ast}_U(\alpha)\in C^\infty(U,E^{\ast}_U)$. It follows from diagram \eqref{diag:trian*} that $(\widehat{\rho}_{U,y} \circ ev^{\ast}(\alpha))(y) = \alpha^{\ast}_{E_U}(y)$ for all $y \in U$. So $\alpha^{\ast}_{E_U}$ is the local realization of $ev^{\ast}(\alpha)$. %\ycomment{So we have $ev^{\ast}(\alpha)=\alpha^{\cD^{\ast}}$. Why do we need notation $\alpha^{\cD^{\ast}}$?} 
Notice that $\alpha^{\ast}_{E_U}$ vanishes on the kernel of $\rho_U$. This is rather remarkable, given that the dimension of $\ker\rho_{U,y}$ is not constant as we change the point $y$ in $U$.
\end{ex}

%\ycomment{May 12: This is another suggestion of Georges. In particular, it gives another proof of Lemma~\ref{not:smoothdual}.}

%\icomment{OK, I'm replacing replace the old Lemma 3.3 (see previous versions of the draft) with this one. And I'm upgrading this lemma to a proposition.}

We used local presentations in order to define the $C^{\infty}(M)$-module $C^{\infty}_c(M,\cD^{\ast})$. This module also admits a description which does not use local presentations, as explained in Proposition \ref{not:smoothdual} below.

\begin{prop}\label{not:smoothdual}
Let $\omega^{\ast}$ be a map $M \ni x \mapsto \omega^{\ast}(x) \in \cD^{\ast}_x$. If $\omega^*\in C^\infty(M,\cD^*)$, then the function $M \ni x \mapsto \langle \omega^{\ast}(x), [X]_x \rangle$ is smooth on $M$ for any $X\in\cD$. Conversely, if the function $M \ni x \mapsto \langle \omega^{\ast}(x), [X]_x \rangle$ is smooth on $M$ for any $X\in\cD$ and $(E_V,\rho_V)$ is an arbitrary local presentation of $\cD$, then the local realization $\omega^{\ast}_V$ of $\omega^{\ast}$ is smooth on $V$. 
\end{prop}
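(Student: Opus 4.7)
Both directions will rest on the single adjoint identity
\[
\langle \omega^{\ast}(y), \widehat{\rho}_{U,y}(e) \rangle = \langle \widehat{\rho}^{\ast}_{U,y}(\omega^{\ast}(y)), e \rangle = \langle \omega^{\ast}_U(y), e \rangle, \quad e \in (E_U)_y,
\]
which is just the definition of the adjoint combined with the definition $\omega^{\ast}_U(y)=\widehat{\rho}^{\ast}_{U,y}(\omega^{\ast}(y))$ of the local realization. The strategy is to use this to translate pairings of $\omega^{\ast}$ against global elements $[X]_y \in \cD_y$ into pairings of $\omega^{\ast}_U$ against smooth sections of $E_U$, and vice versa.

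For the forward direction, fix $X \in \cD$ and $x_0 \in M$; it suffices to establish smoothness of $y \mapsto \langle \omega^{\ast}(y), [X]_y \rangle$ near $x_0$. By the smoothness assumption on $\omega^{\ast}$, I can pick a local presentation $(E_U,\rho_U)$ in a neighborhood $U$ of $x_0$ whose local realization $\omega^{\ast}_U$ is smooth on $U$. Choosing a cutoff $\chi \in C^{\infty}_c(U)$ with $\chi \equiv 1$ on a smaller neighborhood $U' \ni x_0$, the vector field $\chi X|_U$ is a generator of $\cD|_U = \rho_U(\Gamma_c E_U)$, so there exists $\tau \in \Gamma_c E_U$ with $\rho_U(\tau) = \chi X|_U$. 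For $y \in U'$ this gives $[X]_y = [\chi X]_y = \widehat{\rho}_{U,y}(\tau(y))$ in $\cD_y$ (since $(\chi-1)X \in I_y \cD$), and the adjoint identity reduces the pairing to
\[
\langle \omega^{\ast}(y), [X]_y \rangle = \langle \omega^{\ast}_U(y), \tau(y) \rangle,
\]
which is smooth in $y$ as a pairing of smooth sections of $E_U^{\ast}$ and $E_U$.

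For the converse, assume $y \mapsto \langle \omega^{\ast}(y), [X]_y \rangle$ is smooth for every $X \in \cD$, let $(E_V,\rho_V)$ be an arbitrary local presentation, and fix $y_0 \in V$. Choose a local frame $\sigma_1,\ldots,\sigma_\ell$ of $E_V$ over a neighborhood $V' \subset V$ of $y_0$, together with a cutoff $\chi \in C^{\infty}_c(V')$ with $\chi \equiv 1$ near $y_0$. Each $\chi\sigma_i \in \Gamma_c E_V$, and $\rho_V(\chi\sigma_i) = \chi\rho_V(\sigma_i) \in \cD|_V$ has compact support in $V$. Unwinding the definition of $\cD|_V$ as the $C^{\infty}_c(V)$-module generated by restrictions of elements of $\cD$, this vector field extends by zero to some $Z_i \in \cD$ that agrees with $\rho_V(\sigma_i)$ wherever $\chi \equiv 1$. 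Consequently, on that set, $[Z_i]_y = \widehat{\rho}_{V,y}(\sigma_i(y))$, and the adjoint identity gives
\[
\langle \omega^{\ast}_V(y), \sigma_i(y) \rangle = \langle \omega^{\ast}(y), [Z_i]_y \rangle,
\]
which is smooth by hypothesis. Since $\{\sigma_i\}$ is a frame, this shows $\omega^{\ast}_V$ is smooth near $y_0$, and $y_0 \in V$ was arbitrary.

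The only obstacle I anticipate is the bookkeeping between globally defined elements of $\cD$ (needed to even write $[\,\cdot\,]_y$) and locally defined sections of $E_V$; this is handled by the standard cutoff trick together with the easy observation that a compactly supported element of $\cD|_V$ extends to a bona fide element of $\cD$.
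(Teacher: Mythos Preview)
Your proof is correct and follows essentially the same route as the paper's: both directions hinge on the adjoint identity relating $\omega^{\ast}$ paired against classes in $\cD_y$ with $\omega^{\ast}_U$ paired against sections of $E_U$, together with a local frame. The only cosmetic differences are that in the forward direction the paper expands $X$ in a frame of $E_U$ (writing $X=\sum a_i\rho_U(\sigma_i)$) rather than lifting $\chi X$ directly, and in the converse the paper simply writes $[\rho_U(\sigma_i)]_y$ without spelling out the cutoff-and-extend step that you make explicit; your version is arguably more careful on this local-to-global bookkeeping.
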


\begin{proof}
Let $\omega^*\in C^\infty(M,\cD^*)$. Then for every $x \in M$ there is a local presentation $(E_U,\rho_U)$ defined in a neighborhood of $x$ such that the local realization $\omega^{\ast}_U$ is smooth on $U$. We may assume that there exists a local frame $\sigma_1,\ldots,\sigma_d$ of $E_U$ defined on $U$. Consider $X\in \cD$, supported in $U$. We can write 
\[
X=\sum_{i=1}^d a_i\rho_U(\sigma_i)
\] 
with some $a_i\in C^\infty_c(U)$. Then we have 
\[
\langle \omega^{\ast}(x), [X]_x \rangle= \sum_{i=1}^d a_i(x)\langle \rho^*_{U,x}\omega^{\ast}(x), \sigma_i(x)\rangle= \sum_{i=1}^d a_i(x)\langle \omega^{\ast}_U(x), \sigma_i(x)\rangle,
\]
which depends smoothly on $x\in U$. For the proof in the case of an arbitrary $X\in \cD$, we use a appropriate covering of $M$ and a subordinated partition of unity.

On the other hand, assume that the function $M \ni x \mapsto \langle \omega^{\ast}(x), [X]_x \rangle$ is smooth on $M$ for any $X\in\cD$. Let $(E_V,\rho_V)$ be an arbitrary local presentation of $\cD$. For any $x\in V$, let $U\subset V$ be an open neighborhood of $x$ such that that there exists a local frame $\sigma_1,\ldots,\sigma_d\in C^\infty(U,E_V\left|_U\right.)$ of $E_V\left|_U\right.$. Let  $\sigma^\#_1,\ldots,\sigma^\#_d\in C^\infty(U,E^*_V\left|_U\right.)$ be the dual local frame of $E^*_V\left|_U\right.$. Then, for any $y\in U$, we can write 
\[
\omega^{\ast}_V(y)=\sum_{i=1}^d\langle \omega^{\ast}_V(y), \sigma_i(y)\rangle \sigma^\#_i(y)=\sum_{i=1}^d\langle \omega^{\ast}(y), \rho_{U,y}\sigma_i(y)\rangle \sigma^\#_i(y)=\sum_{i=1}^d\langle \omega^{\ast}(y), [\rho_{U}(\sigma_i)]_y)\rangle \sigma^\#_i(y),
\]
that proves smoothness of $\omega^{\ast}_V$ on $U$.
\end{proof}

%\icomment{I'm puting here this an obvious corollary of prop. \ref{not:smoothdual}.}

%\icomment{I remove: We immediately have the following corollary:}

\begin{cor}\label{cor:smoothdual}
There exists a bilinear pairing 
$$
C^{\infty}_c(M,\cD^{\ast}) \otimes_{C^{\infty}_c(M)} \cD \to C^{\infty}_c(M).
$$
\end{cor}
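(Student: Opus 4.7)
The plan is to define the pairing by the obvious pointwise formula
\[
\langle \omega^{\ast}, X\rangle(x) := \langle \omega^{\ast}(x), [X]_x\rangle, \qquad x\in M,
\]
for $\omega^{\ast}\in C^{\infty}_c(M,\cD^{\ast})$ and $X\in\cD$, and then to verify the four required properties: (i) the resulting function lies in $C^\infty_c(M)$, (ii) it is $\R$-bilinear, (iii) it is balanced over $C^{\infty}_c(M)$, so that it descends to the tensor product.

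First I would handle smoothness. This is exactly the content of the forward direction of Proposition~\ref{not:smoothdual}: if $\omega^{\ast}\in C^{\infty}(M,\cD^{\ast})$, then $x\mapsto \langle \omega^{\ast}(x),[X]_x\rangle$ is smooth on $M$ for every $X\in \cD$. Compact support is immediate, since $\operatorname{supp}\langle \omega^{\ast}, X\rangle \subset \operatorname{supp}\omega^{\ast}$, and $\operatorname{supp}\omega^{\ast}$ is compact by assumption. So $\langle \omega^{\ast}, X\rangle\in C^{\infty}_c(M)$.

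Next, $\R$-bilinearity is clear, since both $\omega^{\ast}(x)\mapsto \langle \omega^{\ast}(x),[X]_x\rangle$ and $[X]_x\mapsto \langle \omega^{\ast}(x),[X]_x\rangle$ are linear on the respective fibers. For $C^{\infty}_c(M)$-balancing, given $f\in C^{\infty}_c(M)$ one computes on one hand
\[
\langle f\omega^{\ast}, X\rangle(x) = \langle f(x)\omega^{\ast}(x), [X]_x\rangle = f(x)\langle \omega^{\ast}(x),[X]_x\rangle,
\]
and on the other
\[
\langle \omega^{\ast}, fX\rangle(x) = \langle \omega^{\ast}(x), [fX]_x\rangle = f(x)\langle \omega^{\ast}(x),[X]_x\rangle,
\]
where the last equality uses $[fX]_x=f(x)[X]_x$ in $\cD_x$, which holds because $(f-f(x))X\in I_x\cD$ by definition of $I_x$. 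This balancing property allows the pairing to factor through $C^{\infty}_c(M,\cD^{\ast}) \otimes_{C^{\infty}_c(M)} \cD$, yielding the stated bilinear pairing.

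There is no real obstacle: the substantive smoothness statement has been established as Proposition~\ref{not:smoothdual}, and the remaining verifications are routine bookkeeping using the definition of the $C^{\infty}(M)$-module structure on $C^{\infty}_c(M,\cD^{\ast})$ recorded just after Definition~\ref{dfn:smoothdual} and the definition of $\cD_x=\cD/I_x\cD$.
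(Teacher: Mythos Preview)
Your proof is correct and is exactly the argument the paper has in mind: the corollary is stated without proof, as it follows immediately from Proposition~\ref{not:smoothdual}, and you have simply written out the routine verifications (smoothness from the proposition, compact support, bilinearity, and $C^\infty_c(M)$-balancing via $[fX]_x=f(x)[X]_x$) that the paper leaves implicit.
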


One can easily check that this pairing is non-degenerate. Whence the $C^{\infty}(M)$-module $C^{\infty}_c(M,\cD^{\ast})$ is in duality with the $C^{\infty}(M)$-module $\cD$.

%\begin{lemma}\label{not:smoothdual}
%If $\omega^{\ast}$ is a smooth section of $\cD^{\ast}$ and $(E_V,\rho_V)$ is an \emph{arbitrary} local presentation of $\cD$, then the local realization $\omega^{\ast}_V$ of $\omega^{\ast}$ is smooth on $V$.
%\end{lemma}

%\begin{proof}
%Take an arbitrary $x\in V$. By definition, there is a local presentation $(E_U,\rho_U)$ defined in a neighborhood of $x$ such that the local realization $\omega^{\ast}_U$ of $\omega^{\ast}$ is smooth on $U$. By Proposition~\ref{prop:equivminlocpr}, the local presentations $(E_U,\rho_U)$ and $(E_V,\rho_V)$ are equivalent at $x$. Therefore, there exist an open neighborhood  $W$ of $x$ such that $W \subset U \cap V$, a local presentation $(E_W,\rho_W)$ and morphisms of local presentations $\phi_{W,U} : (E_W,\rho_W) \to (E_U,\rho_U)$ and $\phi_{W,V} : (E_W,\rho_W) \to (E_V,\rho_V)$ such that $\rho_U\left|_W\right. \circ \phi_{W,U}=\rho_W = \rho_V\left|_W\right. \circ \phi_{W,V}$. For the local realization $\omega^{\ast}_W$ of $\omega^{\ast}$, we have 
%\[
%\omega^{\ast}_W(y)=(\phi_{W,U})^*_y \circ \widehat{\rho}^{\ast}_{U,y} \circ \omega^{\ast}(y)=(\phi_{W,U})^*_y \circ \omega^{\ast}_U(y), \quad y\in W.
%\]
%Therefore, $\omega^{\ast}_W$ is smooth on $W$. Similarly, we have 
%\[
%\omega^{\ast}_W(y)=(\phi_{W,V})^*_y \circ \omega^{\ast}_V(y), \quad y\in W.
%\]
%Since $(\phi_{W,V})^*_y$ is an injective map, it follows that $\omega^{\ast}_V$ is smooth on $W$, in particular, in $x$, and that completes the proof.
%\end{proof}

\subsubsection{The Riemannian metric of the dual}\label{sec:Riemdual}

%\textcolor{red}{The definition of Riemannian metric in \ref{sec:dfnRiem} can be reformulated in terms of the dual $\cD^{\ast}$. Although we will not use this formulation in this paper, we describe it here, in order to have it available for future use.}

%\ycomment{We need the Riemannian metric on the dual in the definition of the adjoint. We also need to know some regularity properties of $\langle \omega, \omega^\prime \rangle_{\cD^*}$, because we have to integrate it. I put a remark below.} \icomment{I totally agree.}

%\begin{remark}\label{rem:dualmetric}
Given a Riemannian metric $\langle\ ,\ \rangle_{\cD}=\{\langle\cdot, \cdot\rangle_x, x\in M\}$ on $(M,\cD)$, one can define a family $\langle\ ,\ \rangle_{\cD^*}=\{\langle\cdot, \cdot\rangle_{x}, x\in M\}$ of inner products on $\cD^*_x$ and the pointwise inner product of two elements $\omega,\omega^\prime \in C^\infty(M,\cD^*)$ as a function $\langle \omega, \omega^\prime \rangle_{\cD^*}$ on $M$ given by
\[
\langle \omega, \omega^\prime \rangle_{\cD^*}(x)=\langle \omega(x), \omega^\prime(x) \rangle_x,\quad x\in M.
\]

Unlike the case of $\cD$ (\cf Remark~\ref{rem:justifRiem}), one can prove the following regularity property of the pointwise inner product on $\cD^*$.

\begin{lemma}\label{lem:D*smooth}
For any $\omega,\omega^\prime\in C^\infty(M,\cD^*)$, we have $\langle \omega, \omega^\prime \rangle_{\cD^*}\in C^\infty(M)$. 
\end{lemma}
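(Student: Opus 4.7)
The plan is to reduce the smoothness of $\langle \omega, \omega'\rangle_{\cD^*}$ to the smoothness of the corresponding local realizations in a local presentation $(E_U,\rho_U)$ of the Riemannian metric, and then use the fact (recalled in item (3) of \S\ref{sec:dfnRiem}) that the dual of a Riemannian submersion is an isometry.

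First, I would localize the problem: since smoothness is a local property, it suffices to prove that $\langle \omega, \omega'\rangle_{\cD^*}$ is smooth on a neighborhood of an arbitrary point $x\in M$. Fix such an $x$ and choose, by Definition~\ref{dfn:metric}, a local presentation $(E_U,\rho_U)$ of the Riemannian metric defined in an open neighborhood $U$ of $x$, endowed with a smooth family of inner products $\{\langle\cdot,\cdot\rangle_{(E_U)_y}\}_{y\in U}$ that makes each $(\rho_U)_y$ a Riemannian submersion onto $\cD_y$. This induces a smooth family of dual inner products on the fibers of $E_U^*$.

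Next, I would bring in the local realizations. By Proposition~\ref{not:smoothdual}, the section $\omega\in C^\infty(M,\cD^*)$ (resp.\ $\omega'$) admits a smooth local realization $\omega^*_U\in C^\infty(U,E_U^*)$ (resp.\ $(\omega')^*_U$) characterized by $\omega^*_U(y)=\widehat{\rho}^*_{U,y}\omega(y)$ for all $y\in U$, and similarly for $\omega'$. The key point is that, since $\widehat{\rho}_{U,y}\colon (E_U)_y\to\cD_y$ is a Riemannian submersion, its adjoint $\widehat{\rho}^*_{U,y}\colon\cD^*_y\to (E_U)^*_y$ is an isometry for every $y\in U$. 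This gives the pointwise identity
\[
\langle \omega, \omega'\rangle_{\cD^*}(y)=\langle \omega(y),\omega'(y)\rangle_y=\langle \widehat{\rho}^*_{U,y}\omega(y),\widehat{\rho}^*_{U,y}\omega'(y)\rangle_{(E_U)^*_y}=\langle \omega^*_U(y),(\omega')^*_U(y)\rangle_{(E_U)^*_y}.
\]

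Finally, the right-hand side is the pointwise inner product of two smooth sections of the smooth Euclidean vector bundle $E_U^*$, and therefore depends smoothly on $y\in U$. Since $x$ was arbitrary, $\langle \omega,\omega'\rangle_{\cD^*}\in C^\infty(M)$, as required. The only subtlety, which is exactly the content of Proposition~\ref{not:smoothdual}, is that one can use \emph{any} local presentation (in particular, one realizing the Riemannian metric) to compute the local realization of $\omega$; without that, the smoothness of $\omega^*_U$ would have to be proved independently, which would be the main technical obstacle. With Proposition~\ref{not:smoothdual} in hand, however, the argument is direct.
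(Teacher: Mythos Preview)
Your proof is correct and follows essentially the same approach as the paper: pass to a local presentation of the Riemannian metric, use that $\widehat{\rho}^{\ast}_{U,y}$ is an isometry to identify $\langle \omega,\omega'\rangle_{\cD^*}$ with the pairing of the smooth local realizations in $E_U^*$, and conclude. Your explicit invocation of Proposition~\ref{not:smoothdual} (to ensure the local realizations are smooth for \emph{this particular} local presentation, not just some presentation) is a worthwhile clarification that the paper leaves implicit.
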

\begin{proof}
Take an arbitrary local presentation $(E_U,\rho_U)$ defined in an open subset $U\subset M$. Then the local realizations $\omega_U$ and $\omega^\prime_U$ of $\omega$ and $\omega^\prime$ respectively are smooth on $U$. Since $\widehat{\rho}^{\ast}_{U,x} : \cD^*_x\to E^\ast_{U,x}$ is an isometry for any $x\in U$, we have 
\[
\langle \omega, \omega^\prime \rangle_{\cD^*}(x)=\langle \omega_U(x), \omega^\prime_U(x) \rangle_{E^\ast_{U,x}},\quad x\in U,
\]
that immediately implies that $\langle \omega, \omega^\prime \rangle_{\cD^*}$ is smooth on $U$.
\end{proof}

\subsection{The horizontal differential and its adjoint}\label{sec:hordif}

In view of the above, we are now ready to give the definition of the horizontal differential of a distribution.

\begin{definition}\label{dfn:hordif}
Let $(M,\cD)$ be a smooth distribution.  
\begin{enumerate}
\item The  \textit{horizontal differential} is the operator %\ycomment{it is not the morphism of $C^{\infty}(M)$-modules} 
$d_{\cD} : C^{\infty}_c(M) \to C^{\infty}_c(M,\cD^{\ast})$ defined as $d_{\cD} = ev^{\ast}\circ d$, where $d : C^{\infty}_c(M) \to \Omega^1_c(M)$ is the de Rham differential.
\item Given a local presentation $(E_U,\rho_U)$, put $d_{E^{\ast}_U} : C^{\infty}_c(U) \to C^{\infty}_c(U,E^{\ast}_U)$ the operator defined as the composition of the de Rham differential $d : C^{\infty}_c(U) \to \Omega^1_c(U)$ with the map $\rho_U^{\ast} : \Omega^1_c(U) \to C^{\infty}_c(U,E_U^{\ast})$. We call $d_{E^{\ast}_U}$ a \emph{local presentation} of the horizontal differential $d_{\cD}$.
\end{enumerate}
\end{definition}

Note that the terminology ``local presentation'' for the operator $d_{E^{\ast}_U}$ is justified by the following commutative diagram:
\begin{eqnarray}\label{diag:hordifloc}
\xymatrix{
C^{\infty}_c(U) \ar[r]^{d} & \Omega^1_c(U)  \ar[rd]_{ev^{\ast}} \ar[rr]^{\rho^{\ast}_U} & & C^{\infty}_c(U,E^{\ast}_U) \\
 & & C^{\infty}_c(U,\cD^{\ast}) \ar[ru]_{\widehat{\rho}^{\ast}_U} &
}
\end{eqnarray}
Thus, we have
\[
d_{E^{\ast}_U}=\widehat{\rho}^{\ast}_U\circ d_{\cD}. 
\]

Now let us fix a Riemannian metric on the distribution $(M,\cD)$, as in Definition \ref{dfn:metric}, and a positive smooth density $\mu$ on $M$.

A naive approach to introducing an adjoint for the operator $d_{\cD}=ev^{\ast}\circ d$ would be to use a Riemannian metric on $M$ in order to make sense of the adjoint of the usual de Rham differential $d^{\ast}$. But such a metric would have to be somehow compatible with the Riemannian metric of the distribution $(M,\cD)$, and this reduces considerably the range of applicability of our constructions.

Instead, we will show in this section that an adjoint can be constructed only with the data of the Riemannian metric on the distribution and the smooth density of $M$, for which no compatibility is required. This is possible thanks to the local presentations of our Riemannian metric.

Whence, with the above data we have:

%Now let us fix a Riemannian metric on the distribution $(M,\cD)$ and a positive smooth density $\mu$ on $M$. We have the following data:\ycomment{There is no Riemannian metric on $M$! Therefore, we can't talk about inner products on $\Omega^1(U)$ and $d^*$.}

\begin{enumerate}
%\item There is a horizontal differential $d_{E_U} : C^{\infty}_c(M) \to C^{\infty}_c(U,E^{\ast}_U)$ defined as the composition of the de Rham differential $d : C^{\infty}_c(M) \to \Omega^1_c(M)$ with the map $\rho_U^{\ast} : \Omega^1_c(M) \to C^{\infty}_c(U,E_U^{\ast})$.

\item Given a local presentation $(E_U,\rho_U)$ of the  Riemannian metric on $(M,\cD)$, first we can define an inner product on $C^{\infty}_c(U,E_U^{\ast})$ by $$(\omega^{\ast}_{U,1},\omega^{\ast}_{U,2})_{L^2(U,E_U,\mu)} = \int_U \langle \omega^{\ast}_{U,1}(y),\omega^{\ast}_{U,2}(y) \rangle_{E^\ast_{U,x}}d\mu(y)$$ (We denote $L^2(U,E^{\ast}_U,\mu)$ the completion of $C^{\infty}_c(U,E_U^{\ast})$ with respect to the norm$\|\cdot\|_{L^2(U,E_U^{\ast},\mu)}$ associated to this inner product.) 
Since $d_{E^{\ast}_U}$ is a first order differential operator, we can define its adjoint $d_{E^{\ast}_U}^{\ast} : C^{\infty}_c(U,E^{\ast}_U) \to C^{\infty}_c(U)$ by 
\[
(d_{E^{\ast}_U}^{\ast}\omega^{\ast}_U,\alpha )_{L^2(U,\mu)} = (\omega^{\ast}_U,d_{E^{\ast}_U}\alpha)_{L^2(U,E^{\ast}_U,\mu)} \text{ for all } \omega^{\ast}_U \in C^{\infty}_c(U,E^{\ast}_U) \text{ and } \alpha \in C^{\infty}_c(U).
\]

\item We can also define an inner product on $C^\infty_c(M,\cD^*)$ by
\[
(\omega,\omega^\prime)_{L^2(M,\cD^*,\mu)}=\int_M \langle \omega, \omega^\prime \rangle_{\cD^*}(x) d\mu(x),\quad \omega,\omega^\prime\in C^\infty(M,\cD^*).
\]

By Lemma~\ref{lem:D*smooth}, the function $\langle \omega, \omega^\prime \rangle_{\cD^*}$ is smooth, so the integral is well-defined. We denote $L^2(M,\cD^*,\mu)$ the completion of $C^\infty_c(M,\cD^*)$ with respect to the norm $\|\cdot\|_{L^2(M,\cD^*,\mu)}$.
\end{enumerate}

Since $\cD^*$ is not a vector bundle, the existence of the adjoint $d_{\cD}^{\ast} : C^{\infty}_c(M,\cD^{\ast}) \to C^{\infty}_c(M)$ of the operator $d_{\cD} : C^{\infty}_c(M) \to C^{\infty}_c(M,\cD^{\ast})$ is not immediate. We will show that such an adjoint arises from the adjoints $d_{E^{\ast}_U}^{\ast}$ of the local presentations $d_{E^{\ast}_U}$.

To make a start with explaining this, let us first fix a local presentation $(E_U,\rho_U)$. Now take $\omega^{\ast} \in C^{\infty}_c(U,\cD^{\ast})$. Let $\omega^{\ast}_U \in C^{\infty}_c(U,E^{\ast}_U)$ be the local realization of $\omega^{\ast}$: $ \omega^{\ast}_U(y)=\widehat{\rho}^{\ast}_{U,y} \circ \omega^{\ast}(y)$ for all $y \in U$. Define $d^{\ast}_{\cD,U} \omega^{\ast}\in C^\infty_c(U)$ by
\begin{equation}\label{eqn:evadj}
d^{\ast}_{\cD,U} \omega^{\ast} (y) = d_{E^{\ast}_U}^{\ast} \omega^{\ast}_U(y) \text{ for all } y \in U.
\end{equation}
\begin{lemma}\label{lem:adjhordif1}
The operator $d^{\ast}_{\cD,U} : C^{\infty}_c(U,\cD^{\ast}) \to C^{\infty}_c(U)$ is adjoint to $d_{\cD}\left|_{U}\right.$.
\end{lemma}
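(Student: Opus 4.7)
The plan is to reduce the desired adjointness identity on $\cD^*$ to the already-established adjointness identity for the honest vector bundle $E^*_U$. Concretely, I must show that
\[
(d^{\ast}_{\cD,U}\omega^{\ast},\alpha)_{L^2(U,\mu)} = (\omega^{\ast}, d_{\cD}\left|_U\right.\alpha)_{L^2(U,\cD^{\ast},\mu)}
\]
for every $\omega^{\ast}\in C^{\infty}_c(U,\cD^{\ast})$ and $\alpha\in C^{\infty}_c(U)$. By the very definition of $d^{\ast}_{\cD,U}$ in \eqref{eqn:evadj}, the left-hand side equals $(d^{\ast}_{E^{\ast}_U}\omega^{\ast}_U,\alpha)_{L^2(U,\mu)}$, and by the definition of $d^{\ast}_{E^{\ast}_U}$ as the formal adjoint of the differential operator $d_{E^{\ast}_U}$, this in turn equals $(\omega^{\ast}_U, d_{E^{\ast}_U}\alpha)_{L^2(U,E^{\ast}_U,\mu)}$. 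So the whole game is to identify this last expression with the right-hand side above.

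The key ingredient is the compatibility of the inner products with the local presentation of the Riemannian metric. Since $(E_U,\rho_U)$ is a local presentation of the Riemannian metric, for every $y\in U$ the map $\widehat{\rho}_{U,y}:(E_U)_y\to\cD_y$ is a Riemannian submersion, so its dual $\widehat{\rho}^{\ast}_{U,y}:\cD^{\ast}_y\to E^{\ast}_{U,y}$ is a \emph{linear isometry} (the same observation was used in the proof of Lemma~\ref{lem:D*smooth}). Combined with the commutative diagram \eqref{diag:hordifloc}, which gives $d_{E^{\ast}_U}\alpha(y)=\widehat{\rho}^{\ast}_{U,y}(d_{\cD}\alpha)(y)$, and with the definition $\omega^{\ast}_U(y)=\widehat{\rho}^{\ast}_{U,y}\omega^{\ast}(y)$, the pointwise inner products match:
\[
\langle \omega^{\ast}_U(y),d_{E^{\ast}_U}\alpha(y)\rangle_{E^{\ast}_{U,y}}
=\langle \widehat{\rho}^{\ast}_{U,y}\omega^{\ast}(y),\widehat{\rho}^{\ast}_{U,y}(d_{\cD}\alpha)(y)\rangle_{E^{\ast}_{U,y}}
=\langle \omega^{\ast}(y),(d_{\cD}\alpha)(y)\rangle_{\cD^{\ast}_y}.
\]
Integrating against $\mu$ then yields $(\omega^{\ast}_U,d_{E^{\ast}_U}\alpha)_{L^2(U,E^{\ast}_U,\mu)}=(\omega^{\ast},d_{\cD}\alpha)_{L^2(U,\cD^{\ast},\mu)}$, and chaining the three equalities gives the claim.

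The only point requiring any care is that the expression $d^{\ast}_{\cD,U}\omega^{\ast}$ produced by \eqref{eqn:evadj} is genuinely independent of the local presentation used to define it; but this is automatic once the adjoint relation above is established, since an adjoint is uniquely determined by the pairing. So there is no serious obstacle; the content of the lemma is exactly the isometry property of $\widehat{\rho}^{\ast}_{U,y}$ encoded in Definition~\ref{dfn:metric}, which is what makes the local construction of $d^{\ast}_{\cD,U}$ globally meaningful and compatible with the intrinsic inner product on $\cD^{\ast}$.
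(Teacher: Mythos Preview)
Your proof is correct and follows essentially the same approach as the paper's own proof: unwind the definition \eqref{eqn:evadj}, use the adjointness of $d^{\ast}_{E^{\ast}_U}$, then apply the commutative diagram \eqref{diag:hordifloc} together with the fact that $\widehat{\rho}^{\ast}_{U,y}$ is an isometry to match the pointwise inner products, and integrate. Your closing remark about independence of the local presentation is exactly what the paper establishes immediately \emph{after} the lemma, so you have anticipated the next step as well.
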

\begin{proof}
For any $\omega^{\ast} \in C^{\infty}_c(U,\cD^{\ast})$ and $\alpha \in C^{\infty}_c(U)$, we have:
\begin{multline*}
\left( d^{\ast}_{\cD,U} \omega^{\ast}, \alpha \right)_{L^2(U,\mu)} = \int_U {d_{\cD}^{\ast} \omega^{\ast}(y)}\alpha(y) d\mu(y) = \int_U {d_{E^{\ast}_U}^{\ast} \omega^{\ast}_U(y)}\alpha(y)  d\mu(y) \\ = \int_{U} \langle \omega^{\ast}_U(y), d_{E^{\ast}_U} \alpha(y) \rangle_{E^*_{U,y}} d\mu(y)= \int_{U} \langle \widehat{\rho}^{\ast}_{U,y} \circ \omega^{\ast}(y), \widehat{\rho}^{\ast}_{U,y}\circ d_{\cD} \alpha(y) \rangle_{E^*_{U,y}} d\mu(y)\\ = \int_{U} \langle \omega^{\ast}(y), d_{\cD} \alpha(y) \rangle_{\cD^*_y} d\mu(y)=\left( \omega^{\ast}, d_{\cD} \alpha \right)_{L^2(U,\cD^*,\mu)},
\end{multline*}
where we used the commutative triangle in diagram \eqref{diag:hordifloc} and the fact that $\widehat{\rho}_{U,y}^{\ast}: \cD_y^* \to E^*_{U,y}$ is an isometry.
\end{proof}

In order to show that $d^{\ast}_{\cD,U}$ can be extended to an adjoint $d^{\ast}_{\cD}$ of $d_{\cD}$ (over the whole of $M$ instead of just $U$), we need to prove that $d^{\ast}_{\cD,U}$ does not depend on the choice of local presentation $(E_U,\rho_U)$. For this, we have to show that,
given open subsets $U, V$ of $M$ such that $U \cap V \neq \emptyset$ and  local presentations $(E_U,\rho_U)$ and $(E_V,\rho_V)$ of the Riemannian metric on $\cD$, for any $\omega^{\ast} \in C^{\infty}_c(U\cap V,\cD^{\ast})$, we have
\[
d^{\ast}_{\cD,U}\omega^{\ast}=d^{\ast}_{\cD,V}\omega^{\ast} \in C^{\infty}_c(U\cap V).
\]
This immediately follows from Lemma~\ref{lem:adjhordif1}, because, for any $\alpha \in C^{\infty}_c(U\cap V)$, we have
\[
\left( d^{\ast}_{\cD,U} \omega^{\ast}-d^{\ast}_{\cD,V} \omega^{\ast}, \alpha \right)_{L^2(M,\mu)}=\left( \omega^{\ast}, d_{\cD} \alpha \right)_{L^2(U,\cD^*,\mu)}-\left( \omega^{\ast}, d_{\cD} \alpha \right)_{L^2(V,\cD^*,\mu)}=0.
\]

So we just proved the following result:

\begin{prop}\label{prop:adjhordif}
There exists a unique operator $d^{\ast}_{\cD} : C^{\infty}_c(M,\cD^{\ast}) \to C^{\infty}_c(M)$ which is adjoint of the horizontal differential $d_{\cD}$. The local presentation of $d^{\ast}_{\cD,U}$ is $d^{\ast}_{E^{\ast}_U}$.
\end{prop}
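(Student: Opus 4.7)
The plan is to construct $d^{\ast}_{\cD}$ globally by patching together the locally defined operators $d^{\ast}_{\cD,U}$ via a partition of unity, and then to deduce the adjoint property termwise from Lemma~\ref{lem:adjhordif1}. First I would fix a locally finite open cover $\{U_i\}_{i \in I}$ of $M$ such that each $U_i$ admits a local presentation $(E_{U_i},\rho_{U_i})$ of the Riemannian metric on $\cD$ (such a cover exists since Definition~\ref{dfn:metric} guarantees a local presentation around every point and $M$ is paracompact). Choosing a smooth partition of unity $\{\chi_i\}_{i \in I}$ subordinate to this cover, I would define
\[
d^{\ast}_{\cD}\omega^{\ast} := \sum_{i \in I} d^{\ast}_{\cD,U_i}(\chi_i \omega^{\ast}), \qquad \omega^{\ast} \in C^{\infty}_c(M,\cD^{\ast}).
\]
Since $\chi_i \omega^{\ast}$ is compactly supported in $U_i$, each summand lies in $C^{\infty}_c(U_i)$ (extended by zero to $M$) by \eqref{eqn:evadj}, and local finiteness of the cover ensures the sum is a well-defined element of $C^{\infty}_c(M)$.

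Next I would check that this construction is independent of the choice of cover and of partition of unity. This is precisely the content of the overlap compatibility identity $d^{\ast}_{\cD,U}\omega^{\ast} = d^{\ast}_{\cD,V}\omega^{\ast}$ on $U\cap V$ established in the paragraph immediately preceding the proposition statement. A standard common-refinement argument then shows that two different choices of cover and partition produce the same global operator $d^{\ast}_{\cD}$.

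I would then verify the adjoint property. For any $\alpha \in C^{\infty}_c(M)$, using $\sum_i \chi_i = 1$ together with Lemma~\ref{lem:adjhordif1} applied on each $U_i$, we obtain
\[
(d^{\ast}_{\cD}\omega^{\ast},\alpha)_{L^2(M,\mu)} = \sum_{i}(d^{\ast}_{\cD,U_i}(\chi_i\omega^{\ast}),\alpha)_{L^2(U_i,\mu)} = \sum_{i}(\chi_i\omega^{\ast}, d_{\cD}\alpha)_{L^2(U_i,\cD^{\ast},\mu)} = (\omega^{\ast}, d_{\cD}\alpha)_{L^2(M,\cD^{\ast},\mu)}.
\]
Uniqueness of $d^{\ast}_{\cD}$ follows from the non-degeneracy of the pairing between $C^{\infty}_c(M,\cD^{\ast})$ and $C^{\infty}_c(M)$ noted after Corollary~\ref{cor:smoothdual}: any two operators satisfying the adjoint identity must agree on every $\omega^{\ast}$, tested against arbitrary $\alpha$. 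The claim that $d^{\ast}_{E^{\ast}_U}$ is the local presentation of $d^{\ast}_{\cD}$ over $U$ is tautological from \eqref{eqn:evadj}.

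The only non-formal step here is the overlap compatibility, which has already been carried out in the excerpt; everything else is routine partition-of-unity bookkeeping, so no real obstacle remains. If anything, the conceptual subtlety lies upstream, in making sure that the local definition \eqref{eqn:evadj} is itself intrinsic to the local presentation, but this is handled by Lemma~\ref{lem:adjhordif1} via the isometry property $\widehat{\rho}^{\ast}_{U,y}:\cD^{\ast}_y\to E^{\ast}_{U,y}$.
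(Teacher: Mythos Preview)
Your proposal is correct and follows essentially the same approach as the paper: the key step in both is the overlap compatibility $d^{\ast}_{\cD,U}\omega^{\ast}=d^{\ast}_{\cD,V}\omega^{\ast}$ on $U\cap V$, deduced from Lemma~\ref{lem:adjhordif1}, after which the paper simply declares that the local operators glue while you make the gluing explicit via a partition of unity. One minor quibble: uniqueness follows from non-degeneracy of the $L^2(M,\mu)$ inner product on $C^{\infty}_c(M)$, not from the bilinear pairing of Corollary~\ref{cor:smoothdual}.
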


\section{The horizontal Laplacian of a distribution}\label{sec:horLapl}

\subsection{The definition}

Now we are able to define the horizontal Laplacian of a distribution.

\begin{definition}\label{dfn:horlapl}
Let $(M,\cD)$ be a smooth distribution. Choose a Riemannian metric on $\cD$ and a positive smooth density $\mu$ on $M$. 
\begin{enumerate}
\item The operator $\Delta_{\cD} = d^{\ast}_{\cD} \circ d_{\cD} : C^{\infty}_c(M) \to C^{\infty}_c(M)$ is called the \emph{horizontal Laplacian} of the distribution $(M,\cD)$.
\item Given an open subset $U\subset M$, the operator $\Delta_{\cD,U}=d^{\ast}_{\cD,U}\circ (d_{\cD}\left|_{U}\right.) : C^{\infty}_c(U) \to C^{\infty}_c(U)$ is called the \emph{restriction} of $\Delta_{\cD}$ to $U$.
\item Given a local presentation  $(E_U,\rho_U)$ of $(M,\cD)$. the operator $\Delta_{E_U} = d^{\ast}_{E_U^{\ast}}\circ d_{E_U^{\ast}} : C^{\infty}_c(U) \to C^{\infty}_c(U)$ is called a \emph{local presentation} of the horizontal Laplacian $\Delta_{\cD}$.
\end{enumerate}
\end{definition}

\begin{remarks}\label{rems:horlapl}
\begin{enumerate}
\item Definition \ref{dfn:horlapl} is quite geometric, as it uses the Riemannian metric of the distribution (and a positive density on $M$). Notice that $\Delta_{\cD,U}=\Delta_{E_U}$. This shows that, locally, the horizontal Laplacian $\Delta_{\cD}$ is nicely controlled by its local presentations $(E_U,\rho_U)$. In appendix \ref{app:isometry} we discuss the relation between horizontal Laplacians via an isometry (\cf proposition \ref{prop:isometry}).

\item Also $\Delta_{\cD,U}$ can be described using the quadratic form associated with the inner product of $E_U$: 
\begin{equation}\label{eqn:quadrloc}
(\Delta_{\cD,U}u,u)=\int_U \|d_{E_{U}}u(x)\|_{E^*_{U,x}}^{2}d\mu(x) \text{ for all } u \in C^{\infty}_c(U).
\end{equation}
Actually this integral formula holds globally, using the inner product of the fibers $\cD_x$: 
\begin{equation}\label{eqn:quadrglob}
(\Delta_{\cD}u,u)=\int_M \|d_{\cD}u(x)\|_{\cD^*_x}^{2}d\mu(x) \text{ for all } u \in C^{\infty}_c(M).
\end{equation}

\item Locally, the horizontal Laplacian also admits a ``sum of squares'' description:
%\begin{enumerate}
%\item Choose an orthonormal frame of $E_U$. Then it is easy to see that $\Delta_{\cD,U} = \sum_{i=1}^k (X_{i}\left|_{U}\right.)^{\ast}(X_{i}\left|_{U}\right.)$, where $X_1, \ldots, X_k \in \cD$ are such that $X_{1}\left|_{U}\right., \ldots, X_{k}\left|_{U}\right.$ generate $\cD\left|_{U}\right.$. (In particular the map $\rho_U$ is defined using $X_{1}\left|_{U}\right., \ldots, X_{k}\left|_{U}\right.$.)
Choose an orthonormal frame $(\omega_1,\ldots,\omega_d)$ of $E_U$. Then $\rho_{U}(\omega_1), \ldots, \rho_{U}(\omega_d) \in \cD\left|_{U}\right.$ generate $\cD\left|_{U}\right.$ and we have
\begin{equation}\label{e:sum_of_sq}
\Delta_{\cD,U} = \sum_{i=1}^d \rho_{U}(\omega_i)^{\ast}\rho_{U}(\omega_i).
\end{equation} 
To see this, we can use formula \eqref{eqn:quadrloc}. Denote by $(\omega^\#_1,\ldots, \omega^\#_d)$ the dual orthonormal frame of $E^*_U$. Then, for any $u\in C^\infty_c(U)$, we have 
\begin{multline*}
\|d_{E_{U}}u(x)\|_{E^*_{U,x}}^{2} = \sum_{i=1}^d \left|\langle d_{E_{U}}u(x), \omega^\#_i(x)\rangle _{E^*_{U,x}}\right|^2=\sum_{i=1}^d \left|\langle d_{E_{U}}u(x), \omega_i(x)\rangle\right|^2 \\ = \sum_{i=1}^d \left|\langle du(x), \rho_U[\omega_i(x)]\rangle\right|^2=\sum_{i=1}^d \left|\rho_U[\omega_i]u(x)\right|^2.    
\end{multline*}
Here, by the same notation $\langle\cdot,\cdot\rangle$, we denote the duality between $E^*_U$ and $E_U$ and the duality between $T^*U$ and $TU$. By \eqref{eqn:quadrloc}, we get 
\[
(\Delta_{\cD,U}u,u)=\sum_{i=1}^d \|\rho_U[\omega_i]u\|_{L^2}^2=\sum_{i=1}^d (\rho_U[\omega_i]^*\rho_U[\omega_i]u,u),    
\]
that implies \eqref{e:sum_of_sq}.
\item We will consider $\Delta_{\cD}$ as an unbounded linear operator on the Hilbert space $L^2(M,\mu)$ with domain $C^{\infty}_c(M)$.
\end{enumerate}
\end{remarks}

\subsection{Symbol of the horizontal Laplacian}\label{sec:sumbhor}

The notion of the principal symbol of a operator is  connected with some algebra of differential or pseudodifferential operators. Usually, it is a homomorphism from this algebra to an algebra of symbols. Whence, in order to speak about the principal symbol of the horizontal Laplacian $\Delta_{\cD}$, we need to ensure that it belongs in some pseudodifferential calculus.

Since $\Delta_{\cD}$ is a second order differential operator on $M$, the obvious choice of pseudodifferential calculus for it is the standard calculus of the manifold $M$. From this viewpoint, its principal symbol $\sigma_{\Delta_{\cD}}$ is a smooth function on $T^*M$, homogeneous of degree $2$. Recall that any vector field $X$ on $M$ is a first order differential operator on $M$, whose principal symbol is given by 
\[
\sigma_X(x,\xi)=\langle X(x), \xi\rangle, \quad x\in M, \xi\in T^*_xM. 
\] 
Using properties of the principal symbol and the ``sum of squares'' description \eqref{e:sum_of_sq}, we get
\begin{multline}\label{e:sigmaD}
\sigma_{\Delta_{\cD,U}}(x,\xi)=\sum_{i=1}^d \left|\langle \rho_U[\omega_i](x), \xi\rangle \right|^2 =\sum_{i=1}^d \left|\langle \omega_i, \rho^*_{U,x}(\xi)\rangle \right|^2=\left| \rho^*_{U,x}(\xi) \right|^2_{E^*_{U,x}}=\left|ev^*_{x}(\xi) \right|^2_{\cD^*_{x}}, %\quad x\in U, \xi\in T^*_xM.
\end{multline}
for all $x \in U$ and $\xi \in T^{\ast}_x M$. Here, at the last step, we used the diagram \eqref{diag:trian*} and the fact that $\rho_{U,x}^{\ast}$ is an isometry.

\begin{remark}\label{rem:symbhor}
The equality \eqref{e:sigmaD} %$\sigma_{\Delta_{\cD,U}}(x,\xi)=\left|ev^*_{x}(\xi) \right|^2_{\cD^*_{x}}$ 
suggests that there should be a construction of the principal symbol  of $\Delta_{\cD}$ as a function on the locally compact space $\cD^{\ast}$. Such a symbol would carry information about the module $\cD$ rather than the manifold $M$. First, notice that every $X \in \cD$ gives rise to a symbol $\sigma_{X} : \cD_x^{\ast} \to \C$ 
%as follows: Let $(E_U,\rho_U)$ be a local presentation of $\cD$ over an open $U \subset M$. Then there exists a section $\omega \in \Gamma E_U$ such that $X\left|_{U}\right. = \rho_U \circ \omega$. Of course $\omega$ is not unique, but the class $[X]_x$ in $\cD_x$ does not depend on the choice of $\omega$. In other words, suppose $\eta \in \Gamma E_U$ is such that $X\left|_{U}\right. = \rho_U \circ \omega = \rho_U \circ \eta$. Then, by the definition of the map $\widehat{\rho}_U$ we have $[X]_x = \widehat{\rho}_{U,x}(\omega(x))=\widehat{\rho}_{U,x}(\eta(x))$. Now put $\sigma_X(x,\xi)=\langle [X]_x,\xi \rangle = \langle \widehat{\rho}_{U,x}(\omega(x)),\xi \rangle$.\ycomment{I don't understand your argument given before. I would just put $\sigma_X(x,\xi)=\langle [X]_x,\xi \rangle $. My guess that these are the only $C^\infty(M)$-linear maps from $\cD$ to $C^\infty(M)$.} \icomment{You are right. Throughout the paper, I've been trying to show that everything starts from local presentations, and my argument above is in this spirit. But perhaps this is not necessary. Feel free to remove my argument, and just put $\sigma_X(x,\xi)=\langle [X]_x,\xi \rangle $, as you said.} 
given by $\sigma_X(x,\xi)=\langle [X]_x,\xi \rangle $ for any $x\in M$ and $\xi\in \mathcal D_x$. Then, as in \eqref{e:sigmaD},
it makes sense to define $\sigma_{\Delta_{\cD},U} = \sum_{i=1}^d \overline{\sigma_{\rho_U[\omega_i]}}\sigma_{\rho_U[\omega_i]}$, in other words $\sigma_{\Delta_{\cD},U}(x,\xi) = \left|\xi\right|^2_{\cD^{\ast}_x}$ for every $(x,\xi) \in \cD_x^{\ast}$. However, since the module $\cD$ is \emph{not} necessarily involutive, one cannot associate a pseudodifferential calculus to the distribution $(M,\cD)$. Indeed, it is easy to see that the algebra of differential operators on $M$ generated by $\cD$ coincides with the algebra of differential operators on $M$ generated by the minimal Lie-Rinehart algebra $\cU(\cD)$ of the distribution $\cD$. From this point of view, the symbol we just constructed is meaningless. However this discussion gives rise to a second viewpoint on the horizontal Laplacian $\Delta_{\cD}$ and its principal symbol, which we explain in \S \ref{sec:symblonghor} below.
\end{remark}

\subsubsection{The longitudinal symbol}\label{sec:symblonghor}

Now put $\cF$ the minimal Lie-Rinehart algebra $\cU(\cD)$ of the distribution $\cD$. We restrict to the case where the module $\cF$ is locally finitely generated, so that $(M,\cF)$ is a singular foliation in the sense of \cite{AS1}.

We have $\cD \subseteq \cF$ as modules and $I_x\cD \subseteq I_x\cF$ as ideals, for every $x \in M$. Whence, by taking the quotients, we find that there is a map $\iota_x : \cD_x \to \cF_x$. This map is not injective, %\ycomment{Why is this map not injective? Do you have an example?} \icomment{Something general: Take two $C^{\infty}(M)$-modules $A$ and $B$, such that $A \subseteq B$; it is not always true that the induced map $A_x \to B_x$ is injective, at least not at every point $x$. For example, take the foliation $\cF$ on $\R^2$ defined by the action of $GL(2)$. We have $\cF \subseteq \cX(\R^2)$, but at $0$ we have $\cF_0 = \R^4$ and the map $\iota_x$ is just the evaluation map... Of course our inclusion $\cD \subseteq \cU(\cD)$ is rather special (for foliations $\cU(\cF)=\cF$). But for the distributions $\cD$ which arise in sub-Riemannian geometry, we have $\cU(\cD)=\cX(M)$, so I think one can easily come up with an example... Actually, the map $\iota_x$ should also be thought of as a kind of evaluation map. But, in any case, we don't need to discuss the injectivity of $\iota_x$ here, we don't use it anywhere in the paper. So, feel free to remove it if you wish.} 
 but we can dualize it to obtain a linear map $$\iota^{\ast}_x : \cF_x^{\ast} \to \cD_x^{\ast}.$$ 

\begin{lemma}
The map $\iota^{\ast} : \cF^{\ast} \to \cD^{\ast}$ is continuous.
\end{lemma}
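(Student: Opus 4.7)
The plan is to use the universal property of the initial topology on $\cD^*$. Recall that the topology on $\cD^*$ is defined as the smallest one for which the projection $p_{\cD^*} : \cD^* \to M$ and the functions $q_X^{\cD} : \cD^* \to \R$, $(x,\xi) \mapsto \langle \xi, [X]_x\rangle$, $X\in \cD$, are all continuous. By the universal property, a map $f : Y \to \cD^*$ from any topological space is continuous if and only if $p_{\cD^*}\circ f$ and $q_X^{\cD}\circ f$ are continuous for all $X\in\cD$. Applying this with $f=\iota^*$ reduces the statement to checking two conditions.

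First, I would observe that $p_{\cD^*}\circ\iota^* = p_{\cF^*}$, because for any $(x,\xi)\in\cF^*$ the element $\iota^*_x(\xi) \in \cD^*_x$ lies over the same base point $x$. Since $p_{\cF^*}$ is continuous by definition of the topology on $\cF^*$, this disposes of the first condition.

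Second, I would fix $X\in\cD$ and compute $q_X^{\cD}\circ \iota^*$. For $(x,\xi)\in\cF^*$, the definition of the transpose gives
\[
(q_X^{\cD}\circ \iota^*)(x,\xi) = \langle \iota^*_x(\xi),\,[X]_x\rangle_{\cD_x} = \langle \xi,\,\iota_x([X]_x)\rangle_{\cF_x}.
\]
Because the inclusion of modules $\cD\subseteq \cF$ sends $[X]_x\in\cD_x$ to the class $[X]_x\in\cF_x$ of the very same vector field $X$ (now viewed in $\cF$), the right hand side is exactly $q_X^{\cF}(x,\xi)$. By the definition of the topology on $\cF^*$, the function $q_X^{\cF}$ is continuous, so $q_X^{\cD}\circ \iota^*$ is continuous as well.

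Both conditions of the universal property are satisfied, and therefore $\iota^*$ is continuous. There is no substantial obstacle here: the only step requiring a moment of care is the identification $\iota_x([X]_x)=[X]_x$, which rests on the compatibility of the quotient maps $\cD\to \cD_x=\cD/I_x\cD$ and $\cF\to\cF_x=\cF/I_x\cF$ under the inclusion $\cD\hookrightarrow\cF$ (equivalently, on the fact that $I_x\cD\subseteq I_x\cF$, as already noted in the excerpt).
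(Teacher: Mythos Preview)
Your proof is correct and follows essentially the same approach as the paper's: both verify continuity via the initial topology on $\cD^*$ by checking the two identities $p_{\cD^*}\circ\iota^* = p_{\cF^*}$ and $q_X^{\cD}\circ\iota^* = q_{\iota(X)}^{\cF}$ for $X\in\cD$. Your version is slightly more explicit about the universal property and about why $\iota_x([X]_x)=[X]_x$, but the argument is the same.
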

\begin{proof}
As we recalled in Section \ref{sec:dual}, %\ref{sec:fibers} 
given a smooth distribution $(M,\cB)$, the space $\cB^{\ast}=\bigcup_{x \in M}\cB_x^{\ast}$ is a locally compact space when it is endowed with the smallest topology making the projection $p : \cB^{\ast} \to M$ as well as the maps $q_X : \cB^{\ast} \to \R$ continuous, for every $X \in \cB$. It is easy to see that the map $\iota^{\ast}$ commutes with the projections $p^{\cF}$ and $p^{\cD}$ of $\cF^{\ast}$ and $\cD^{\ast}$ respectively, namely $p^{\cD} \circ \iota^{\ast}=p^{\cF}$. Moreover, if $X \in \cD$ then $q^{\cD}_X \circ \iota^{\ast} = q^{\cF}_{\iota(X)}$. (Here $q^{\cD}_X : \cD^{\ast} \to \R$ and $q^{\cF}_X : \cF^{\ast} \to \R$ are the maps induced by the vector fields $X \in \cD$ and $\iota(X) \in \cF$ respectively.) Whence $\iota^{\ast}$ is continuous.
\end{proof}

The operator $\Delta_{\cD}$ also defines a second order pseudodifferential multiplier in the longitudinal pseudodifferential calculus associated with the singular foliation $\cF$, which was constructed in \cite{AS2}. 

Explicitly, recall that any vector field $X\in \cF$ is a first order differential multiplier, whose longitudinal principal symbol is a continuous function on $\cF^*$ is given by 
\[
\sigma_X(x,\xi)=\langle [X]_x, \xi\rangle, \quad x\in M, \xi\in \cF^*_x. 
\] 
Using properties of the principal symbol and the ``sum of squares'' description \eqref{e:sum_of_sq}, we can compute the longitudinal principal symbol of $\Delta_{\cD}$ as follows:
\begin{multline*}
\sigma_{\Delta_{\cD}}(x,\xi)
%=\sum_{i=1}^d \left|\langle [\rho_U[\omega_i]]_x, \xi\rangle \right|^2
=\sum_{i=1}^d \left|\langle \hat\rho_{U,x}[\omega_i(x)], \xi\rangle \right|^2=\sum_{i=1}^d \left|\langle \hat\rho_{U,x}[\omega_i(x)], \iota^*_x\xi\rangle \right|^2\\ =\sum_{i=1}^d \left|\langle \omega_i(x), \hat\rho^\ast_{U,x}[\iota^*_x\xi]\rangle \right|^2=\left|\hat\rho^\ast_{U,x}[\iota^*_x\xi]\right|_{E^*_{U,x}}^2= \left|\iota^*_{x}(\xi) \right|^2_{\cD^*_{x}}, \quad x\in M, \xi\in \cF^*_x.
\end{multline*}
Here at the last step, we used the fact that $\hat\rho^\ast_{U,x} : \cD^*_{x}\to E^*_{U,x}$ is an isometry.
%\[
%\sigma_{\Delta_{\cD}}(x,\xi)=\sum_{i=1}^d \left|\langle \rho_U[\omega_i], \xi\rangle \right|^2=\sum_{i=1}^d \left|\langle \rho_U[\omega_i], \iota^*\xi\rangle \right|^2= \left|\iota^*_{x}(\xi) \right|^2_{\cD^*_{x}}, \quad x\in M, \xi\in \cF^*_x.
%\]  
\begin{remark}
Notice that this symbol vanishes outside the zero section of $\cF^{\ast}$. Specifically, it vanishes on the subset $\coprod_{x \in M}\{\xi \in \cF^{\ast}_x : \xi\left|_{\iota_x(\cD_x)}\right.=0\}$. Whence, our operator $\Delta_{\cD}$ may not be elliptic in the longitudinal pseudodifferential calculus of $(M,\cF)$.
\end{remark}

\subsection{The horizontal Laplacian as a multiplier of the foliation algebra}\label{sec:multiplier}

Let $(M,\cD)$ be a smooth distribution such that $\cF = \cU(\cD)$ is a foliation. We show here the existence of a pseudodifferential multiplier $P_{\cD}$ of $C^{\ast}_r(\cF)$, in the sense of \cite{AS2}, such that the horizontal Laplacian $\Delta_{\cD}$ is the representation of $P_{\cD}$ to $L_2(M,\mu)$.

%\icomment{I'm putting your construction here, and I'm removing what I wrote regarding bisubmersions. This is just to keep the paper short. The constructions with bisubmersions that I wrote can be written down with all the details if necessary -- and this will be necessary if we ever decide to generalise the Chernoff paper to multipliers. But this is work for a different paper, as mentioned in the last sentence of this section.}

Indeed, as shown in Lemma A.2, our Laplacian can be written as
\[
\Delta_\cD=\sum_{\alpha=1}^m\sum_{j=1}^{d_\alpha} \phi_\alpha (X^{(\alpha)}_j)^* X^{(\alpha)}_j\psi_\alpha  
\]
or just 
\[
\Delta_\cD=\sum_{j=1}^m Y_j^* X_j  
\]
with some $X_j, Y_j\in \cD$. Now, from [3] (or [10]) we know that each $X\in \cF$ is the presentation of some multiplier $X^\cF\in \Psi^1(\cF)$ and, since the presentation is a $\ast$-presentation, each $X^*\in \cF$ is the presentation of the multiplier $(X^\cF)^*\in \Psi^1(\cF)$. Therefore, $\Delta_\cD$ is the presentation of the multiplier 
\[
P_\cD=\sum_{j=1}^m (Y^\cF_j)^* X^\cF_j\in \Psi^2(\cF).   
\]
Note that the above also works for noncompact manifolds, because in this case all the sums are infinite, but locally finite.

\begin{remark}
The proof of hypoellitipicity for $\Delta_{\cD}$ that we give in \S \ref{sec:hypoell} goes through verbatim for $P_{\cD}$ as well. In order to prove the essential self-adjointness of $P_{\cD}$ as we do in \S \ref{sec:selfadj} though, one needs to generalize the results in \cite{Chernoff} to the setting developed in \cite{AS2}. This is beyond the scopes of the current article.
\end{remark}

\section{Examples}

Here we present explicit examples of the constructions given in the previous sections. Specifically, we provide explicit calculations for the Riemannian metric of a distribution $(M,\cD)$, the horizontal differential $d_{\cD}$ and its dual $d_{\cD}^{\ast}$, as well as the Laplacian $\Delta_{\cD}$ (verifying that it is a sum of squares), in the following cases: First, in \S \ref{sec:vanishorigin} we look at the distribution $(\R^2,\cD)$ where $\cD$ is the module of vector fields in $\R^2$ which vanish at the origin. In other words, the module $\cD$ in this case is the one generated by the infinitesimal generators of the action of $GL(2,\R)$ on $\R^2$. Second, in \S \ref{sec:pathological}, we examine the quite pathological distribution of $\R^2$ mentioned in item e) of examples \ref{exs:distr}. Third, in \S \ref{sec:Heisenberg} we consider the sub-Riemannian structure of the Heisenberg group.

Notice that our first example arises from a Lie group action. More generally, let $\g$ be a Lie algebra of dimension $k$ and $\g \to \cX(M), V \mapsto V^{\dagger}$ be an (infinitesimal) action of $\g$ on a smooth manifold $M$.
%If $V_1,\ldots,V_k$ is a basis of $\g$, put $\cD$ the submodule of vector fields generated by the vector fields $V_1^{\dagger},\ldots,V_k^{\dagger}$. In fact, $\cD$ in this case is a foliation. Note that we obtain the same module $\cD$ if we start from a different basis of $\g$.
Put $\cD$ the submodule of vector fields generated by all vector fields $V^{\dagger}$ with $V\in \g$. In fact, $\cD$ in this case is a foliation. In the case $G$ is the Lie algebra of a compact Lie group $G$ and $M$ is compact, any invariant Riemannian metric on $G$ gives rise to a Riemannian metric on $\cD$ and the associated horizontal Laplacian $\Delta_{\cD}$ is exactly the operator $-\Delta_G$ introduced by Atiyah in \cite[page 12]{AtiyahLNM401}. Also note that the construction of $\Delta_{\cD}$ does not require any compactness assumptions. (Of course, neither does the construction of $-\Delta_G$.) %\icomment{I don't think it is necessary to give more explanations for this... One sees it immediately by looking at Atiyah's book. What do you think?} 

On the other hand, the distribution $(\R^3,\cD)$ arising considering the sub-Riemannian structure of the Heisenberg group, is not involutive. However, the fibers $\cD_{(x,y,z)}$ have dimension $2$ at every $(x,y,z) \in \R^3$. Whence $\cD$ is a projective module, and the familiar Serre-Swan theorem implies that it is the module of sections of a vector sub-bundle $H \to \R^3$ of $T\R^3$. This bundle is a minimal local presentation of $\cD$, where $\rho : H \to TM$ is the inclusion map. This is the case for any smooth distribution $(M,\cD)$ such that the module $\cD$ is projective. It follows that, in cases as such (e.g. the Heisenberg group), our horizontal Laplacian $\Delta_{\cD}$ coincides with the one given in \cite{YK1} and, in the case when the distribution is bracket generating, it coincides with the usual sub-Laplacian in sub-Riemannian geometry (see, for instance, \cite{Agrachev2009}, \cite{Gordina}, \cite{Hassannezhad}, \cite{Montgomery} and the references therein)

Last, the module $\cD$ of the pathological distribution we examine in \S \ref{sec:pathological} is neither projective, nor a foliation. Nevertheless, we are able to attach a horizontal Laplacian to it.

\subsection{Vector fields on the plane, vanishing at the origin}\label{sec:vanishorigin}

%\ycomment{I think it would be helpful to mention two cases as an illustration. First, it is the case of a constant rank distribution studied in my previous paper. So $\cD =\Gamma(H)$ and $H$ is a (global) local presentation. The example of a Heisenberg group is a particular case. Second, it is the case when $\cD$ is given by an arbitrary Lie group action. Then, as a particular case, we get the Laplacian $\Delta_G$ along the orbits, which was already used by Atiyah on \cite[page 12]{AtiyahLNM401}. The first example is a particular case.}

%\icomment{I totally agree. And indeed, the first example is just the action of $GL(2,\R)$. In fact, I think that having the examples you suggest in the paper is more crucial than having Appendix A... But I won't be able to work much the next 2 weeks: This week I have exams of my courses in the university (so I'll be marking exam papers), plus two visitors: Ping Xu and Matthieu Stienon are coming from Penn State. And the following week I'll be in Cyprus. So, if you have time to make some progress with these cases, the draft will be free from me.}

%\ycomment{I also won't be able to work much the next three weeks. In the second part of this week, I go to Novosibirsk, then I go to Spain from 11 to 21 of June. If I find the time, I will try to make some progress.}

%\icomment{There are a couple of little corrections regarding notation, which I corrected in the last 2 examples, They are written in red, could you check them and remove the colors? I also added one more remark in the first example, also written in red.}

Let us consider the distribution $(\mathbb R^2, \cD)$, where $\cD$ is the $C^\infty_c(\mathbb R^2)$-module of compactly supported vector fields on $\RR^2$, vanishing at the origin. In fact, this is the foliation generated by vector fields 
\[
X_{11}=x\partial_x,\quad X_{12}=x\partial_y,\quad X_{21}=y\partial_x,\quad X_{22}=y\partial_y.
\]
Working as in examples \ref{exs:fibcalc} we find $\cD_{(x,y)}\cong \RR^2$ if $(x,y)\neq (0,0)$ and $\cD_{(0,0)}\cong \RR^4$.

Consider $\alpha=\alpha_1(x,y)dx+\alpha_2(x,y)dy\in \Omega^1_c(\RR^2)$ and recall that $ev^{\ast}(\alpha)(x,y)([X]_{(x,y)}) = \alpha_{(x,y)}(X)$ for every $X \in \cD$. So we have
\[
ev^*(\alpha)(x,y)=(\alpha_1(x,y), \alpha_2(x,y))\in \cD^*_{(x,y)}\cong \RR^2 \text{ if } (x,y)\neq (0,0)
\] 
and 
\[
ev^*(\alpha)(0,0)=0\in \cD_{(0,0)}\cong \RR^4. 
\]
Whence, for $f\in C^\infty_c(\RR^2)$, $d_\cD f(x,y)=ev^{\ast}(df)(x,y)=(\partial_xf(x,y), \partial_yf(x,y))\in \cD^*_{(x,y)}\cong \RR^2$ if $(x,y)\neq (0,0)$ and $d_\cD f(0,0)=0\in \cD_{(0,0)}\cong \RR^4$.

The minimal local presentation $E_U$ at $(0,0)$ is given by the trivial vector bundle $E_U=\RR^2\times \RR^4$ over $U=\RR^2$. If we denote by $\{\sigma_{ij}, i,j=1,2\}$ the standard base in $\mathbb R^4$ and by $\{\sigma^*_{ij}, i,j=1,2\}$ the dual base in $(\mathbb R^4)^*$, then $\rho_U$ sends each $\sigma_{ij}$ to $X_{ij}$. Now, for $\alpha=\alpha_1(x,y)dx+\alpha_2(x,y)dy\in \Omega^1_c(\RR^2)$ we find $\langle x\partial_x , \alpha \rangle=x\alpha_1(x,y)$ and $\langle x\partial_y , \alpha \rangle=x\alpha_2(x,y)$, $\langle y\partial_x , \alpha \rangle=y\alpha_1(x,y)$, $\langle y\partial_y , \alpha \rangle=y\alpha_2(x,y)$. Therefore%, for $\alpha=\alpha_1(x,y)dx+\alpha_2(x,y)dy\in \Omega^1_c(\RR^2)$, we have 
\[
\rho^*_U\alpha=x\alpha_1\sigma^*_{11}+x\alpha_2\sigma^*_{12}+y\alpha_1\sigma^*_{21}+y\alpha_2\sigma^*_{22}
\]
Whence, for $f\in C^\infty_c(\RR^2)$, $d_{E_U} f\in C^\infty_c(\RR^2, E^*_U)$ is given by
\[
d_{E_U} f(x,y)=xf_x(x,y)\sigma^*_{11}+xf_y(x,y)\sigma^*_{12}+yf_x(x,y)\sigma^*_{21}+yf_y(x,y)\sigma^*_{22}.
\]

The restriction of a Riemannian metric on $\cD$ to $\mathbb R^2\setminus \{0\}$ is a Riemannian metric on the manifold $\mathbb R^2\setminus \{0\}$, in other words, a smooth family of inner products on the fibers of the trivial bundle $T(\R^2 \setminus \{0\})=(\R^2 \setminus \{0\}) \times \R^2$. %\icomment{In fact, we have an exact sequence $0 \to C^{\infty}_c(\R^2 \setminus \{0\},(\R^2)^{\ast}) \to C^{\infty}_c(\R^2,\cD^{\ast}) \to (\R^4)^{\ast} \to 0$. By the map $\widehat{\rho}_U^{\ast}$, it maps to the exact sequence $0 \to C^{\infty}(\R^2 \setminus\{0\}, (E_U^{\ast})|_{\R^2 \setminus\{0\}}) \to C^{\infty}(\R^2,E_U^{\ast}) \to (\R^4)^{\ast} \to 0$. This gives another way to see the behaviour of the Riemannian metric on $\cD$ near the origin. Is the Riemannian metric on $\cD$ the mapping cone of the Riemannian metrics in some sense?}  
So, it can be written as 
\[
g_{(x,y)}=A(x,y)dx^2+2B(x,y)dx\,dy+C(x,y)dy^2, \quad (x,y)\neq (0,0). 
\]
with some $A,B,C\in C^\infty_c(\R^2 \setminus \{0\}).$ Its behavior near the origin is described as follows. Let $\{G_{(x,y)}, (x,y)\in \RR^2\}$ be a smooth family of inner products in the fibers of $E_U$:
\[
G_{(x,y)}=\sum_{i_1,j_1,i_2,j_2=1,2} G_{i_1j_1,i_2j_2}(x,y)\sigma^*_{i_1j_1}\sigma^*_{i_2j_2},
\]
then, for any $(x,y)\in \RR^2$, the map $\rho_U : \mathbb R^4\to \RR^2$ is a Riemannian submersion, or, equivalently, $\rho^*_U : (\mathbb R^2)^*\cong T^*_{(x,y)}\RR^2 \to (\RR^4)^*$ is an isometry. For $\alpha=\alpha_1(x,y)dx+\alpha_2(x,y)dy\in \Omega^1_c(\RR^2)$, we have
\[
\|\alpha(x,y)\|^2_{g^{-1}}=\|\rho^*_U\alpha(x,y)\|^2_{G^{-1}}. 
\]
In particular, if $G$ is the standard metric on $\RR^4$, then $(\sigma_{ij}, i,j=1,2)$ is an orthonormal base in $\mathbb R^4$ and
\[
\|\alpha(x,y)\|^2_{g^{-1}}=(x^2+y^2)(\alpha_1^2(x,y)+\alpha_2^2(x,y)). 
\]
We get
\[
g_{(x,y)}=\frac{1}{x^2+y^2}(dx^2+dy^2), \quad (x,y)\neq (0,0). 
\]
Assume that the positive density $\mu$ on $\RR^2$ is given by
\[
\mu=dx\,dy.
\]

Let $\omega$ be a map $\RR^2 \ni (x,y) \mapsto \omega(x,y) \in \cD^{\ast}_{(x,y)}$. By definition, $\omega$ is a smooth section of $\cD^{\ast}$ iff its local realization $\omega_U$ defined by  $\omega_U=\widehat\rho^{\ast}_{U} \circ \alpha$ is smooth on $\RR^2$. If we write $\omega$ on $\RR^2\setminus\{0\}$ as $\omega=\omega_1(x,y)dx+\omega_2(x,y)dy$, then 
\[
\widehat \rho^*_U\omega=x\omega_1\sigma^*_{11}+x\omega_2\sigma^*_{12}+y\omega_1\sigma^*_{21}+y\omega_2\sigma^*_{22},
\]
and $\omega$ is smooth iff the functions $x\omega_1, x\omega_2, y\omega_1, y\omega_2$ extend to smooth functions on $\RR^2$.  %\textcolor{red}{Note that this allows $\alpha$ to be an ``abnormal'' 2-form, for instance $\alpha = \frac{1}{x} dx + \frac{1}{y} dy$.}

For $\omega\in C^\infty_c(\RR^2,\cD^*)$ of the form $\omega=\omega_1(x,y)dx+\omega_2(x,y)dy$ on $\RR^2\setminus\{0\}$, by definition, we have 
\begin{multline*}
\int_{\RR^2} d^{\ast}_{\cD}\omega(x,y)f(x,y)\,dx\,dy= \int_{\RR^2} \langle \omega(x,y), d_{\cD}f(x,y)\rangle_{\cD^*_{(x,y)}}\,dx\,dy\\ =\int_{\RR^2} (x^2+y^2)\left(\omega_1(x,y)\frac{\partial f}{\partial x}(x,y)+\omega_2(x,y)\frac{\partial f}{\partial y}(x,y)\right)\,dx\,dy  
\end{multline*} 
for every $f \in C^{\infty}_c(\R^2)$. Integrating by parts, we see that $d^*_\cD\omega\in C^\infty_c(\RR^2,\cD^*)$ must be given by  %\icomment{I'm sure the formula below is correct, but it's not obvious to me how it comes up from the theory we developed. It would be good to give some more explanations here...}
\[
d^*_\cD\omega(x,y)=-\frac{\partial}{\partial x}((x^2+y^2)\omega_1)-\frac{\partial}{\partial y}((x^2+y^2)\omega_2).
\]
Finally, for $f\in C^\infty_c(\RR^2)$, $\Delta_\cD f\in C^\infty_c(\RR^2)$ is given by %\icomment{Likewise, more explanations for this formula would help...}
\[
\Delta_\cD f(x,y)=-\frac{\partial}{\partial x}\left((x^2+y^2)\frac{\partial f}{\partial x}\right)-\frac{\partial}{\partial y}\left((x^2+y^2)\frac{\partial f}{\partial y}\right).
\]

In accordance with item c), this operator admits a ``sum of squares'' description:
\[
\Delta_\cD=X^*_{11}X_{11}+X^*_{12}X_{12}+X^*_{21}X_{21}+X^*_{22}X_{22}.
\]
%\[
%\Delta_\cD =\left(x\frac{\partial}{\partial x}\right)^* x\frac{\partial }{\partial x}-\left(y\frac{\partial}{\partial x}\right)^* y\frac{\partial }{\partial x}-\left(x\frac{\partial}{\partial y}\right)^*x\frac{\partial }{\partial y}-\left(y\frac{\partial}{\partial y}\right)^* y\frac{\partial }{\partial y},
%\]
Notice that the above expression shows that the horizontal Laplacian $\Delta_\cD$ is a longitudinal Laplacian of $(M,\cD)$ introduced in \cite{AS2}.

\begin{remark}\label{rem:regul1}
This example also illustrates the kind of regularity represented by the algebra $C^{\infty}_c(M,\cD^{\ast})$ in general. As we already pointed out, in this particular example, an element $\omega$ of $C^{\infty}(\R^2,\cD^{\ast})$ is a map $(\omega_1,\omega_2) : \R^2\setminus\{0\} \to \R^2$ such that the functions  $x\omega_1, x\omega_2, y\omega_1, y\omega_2 : \R^2 \setminus \{0\} \to \R$ extend to smooth functions on $\R^2$. This is equivalent to the functions $\omega_1,\omega_2 : \R^2 \setminus \{0\} \to \R$ being smooth in the usual sense. We also have $\omega(0,0)= \widehat \rho^*_U\omega(0,0) =0\in \cD^{\ast}_{(0,0)}\cong \R^4$. 
%\eg $\omega_1 = \frac{1}{x}$ and $\omega_2 = \frac{1}{y}$, although these are not smooth functions on $\R^2$.\ycomment{This is wrong because $\omega$ is smooth if and only if $x\omega_1, x\omega_2, y\omega_1, y\omega_2$ are smooth. Here it seems that this is equivalent to $\omega_1,\omega_2$ to be smooth.} But it does not include $\omega_1 = \frac{1}{x^2}$ and $\omega_2 = \frac{1}{y^2}$ because in this case $x\omega_1 = \frac{1}{x}$ and $y\omega_2 = \frac{1}{y}$ cannot be extended to smooth functions on $\R^2$.
\end{remark}

\begin{remark}
As we already said, the module $\cD$ in this example is generated from the infinitesimal generators of the action of $GL(2,\R)$ on $\R^2$. Recall that the foliation associated with this action has also been considered in \cite{AS1}. In fact, the horizontal Laplacian $\Delta_{\cD}$ is the longitudinal Laplacian introduced in \cite{AS2} for this example.

However, the analysis of the Riemannian metric we give here adds some extra information concerning the nature of the singularity at zero. Recall that in \cite{AS1}, the singularity was reflected only by the dimension jump of the fibers $\cD_{(x,y)}$ at $(0,0)$: When $(x,y) \neq (0,0)$ we have $\cD_{(x,y)}=\R^2$, while $\cD_{(0,0)} = \R^4$ is the Lie algebra of $GL(2,\R)$. But now we see that the pathology of the singularity at $(0,0)$ reflects also on the norm of the vectors of $\cD_{(0,0)}$, starting from the Euclidean metric on the local presentation $\R^2 \times \R^4$: Our description of the metric near $(0,0)$ implies that the norm of any vector in $\cD_{(0,0)}$ is none other than infinity.

In other words, even if we start from something as simple as the Euclidean metric of $\R^4$ (which is used to define the metric of the local presentation $\R^2 \times \R^4$) we obtain a Riemannian metric on the fibers of $\cD$ which explodes to infinity at $(0,0)$. Remarkably though, a horizontal Laplacian can still be defined in a geometric way.
\end{remark}

\subsection{The pathological distribution on the plane}\label{sec:pathological}

%\icomment{We need a good name for this distribution...}

Here we consider the distribution $(\R^2,\cD)$ discussed in item (e) of examples \ref{exs:distr}. Recall that the module $\cD$ is generated by the vector fields $\partial_x$ and $\phi\partial_y$, where $\phi : \R^2 \to \R$ is defined by $\phi(x,y)=e^{-\frac{1}{x}}$ if $x > 0$ and $\phi(x,y)=0$ if $x \leq 0$. Also recall from item (c) in examples \ref{exs:fibcalc} that its fibers are $\cD_{(x,y)}=\R$ if $x<0$, $\cD_{(0,0)}=\R^2$ and $\cD_{(x,y)}=\R^2$ if $x > 0$.

As in the previous example, let $\alpha = \alpha_1(x,y)dx + \alpha_2(x,y)dy \in \Omega^1(\R^2)$. We find: 
\[
ev^{\ast}(\alpha)(0,y) = (\alpha_1(0,0),\alpha_2(0,y)) \in \R^2, \text{ for any } y \in \R 
\]
\[
ev^{\ast}(\alpha)(x,y) = \alpha_1(x,y) \in \R, \text{ if } x <0,
\]
\[
ev^{\ast}(\alpha)(x,y) = (\alpha_1(x,y),\alpha_2(x,y)) \in \R^2, \text{ if } x >0.
\]
So, if $f \in C^{\infty}_c(\R^2)$ we find 
\[
d_{\cD}f(0,y)=\left(\frac{\partial f}{\partial x}(0,y), \frac{\partial f}{\partial y}(0,y)\right) \text{ for any } y \in \R,
\]
\[
d_{\cD}f(x,y)=\frac{\partial f}{\partial x}(x,y) \text{ if } x < 0, y \in \R, 
\]
and
\[
d_{\cD}f(x,y)=\left(\frac{\partial f}{\partial x}(x,y),\frac{\partial f}{\partial y}(x,y)\right) \text{ if } x > 0, y \in \R.
\]
Now we consider $U=\R^2$ and the local presentation $E_U = \R^2 \times \R^2$ which is minimal at any $(x,y)$ with $x\geq 0$. Again, we will consider the standard Euclidean metric $G$ on $\R^2$, the standard orthonormal frame $\{\sigma_1,\sigma_2\}$ of $E_U$ defined by the canonical (orthonormal) basis of $\R^2$, as well as its dual frame $\{\sigma_1^{\ast},\sigma_2^{\ast}\}$ of $E_U^{\ast}$. The map $\rho_U$ sends $\sigma_1 \mapsto \partial_x$ and $\sigma_2 \mapsto \phi\partial_y$. For an arbitrary 1-form $\alpha = \alpha_1(x,y)dx + \alpha_2(x,y)dy$ we find 
\[
\langle \partial_x,\alpha \rangle = \alpha_1(x,y) \text{ and } \langle \alpha,\phi\partial_y \rangle = \phi(x,y)\alpha_2(x,y).
\]
It follows that $\rho^{\ast}_{U}(\alpha) = \alpha_1(x,y)\sigma_1^{\ast} + \phi(x,y)\alpha_2(x,y)\sigma_2^{\ast}$. Therefore, the local presentation of $d_{\cD}$ is 
\[
d_{E_U}f(x,y) = \frac{\partial f}{\partial x}(x,y) \sigma_1^{\ast} + \phi(x,y)\frac{\partial f}{\partial y}(x,y) \sigma_2^{\ast}.
\]

A map $\omega : (x,y)\in \R^2\mapsto \omega(x,y)\in \cD^*_{(x,y)}$ can be written as $\omega=\omega_1(x,y)\in \cD^*_{(x,y)}\cong \R$ if $x<0$ and $\omega=(\omega_1(x,y),\omega_2(x,y))\in \cD^*_{(x,y)}\cong \R^2$ if $x\geq 0$. It is smooth if and only if $\omega_1\in C^\infty(\R^2)$ and the function $\phi\omega_2$ on $C^\infty(\R^2_+)$ extended by zero to $\cR^2_-$ is smooth on $\R^2$. Whence, if $g$ is the Riemannian metric of $\cD$ induced by $G$, the equality $\|\omega(x,y)\|_{g^{-1}}^2 = \|\widehat{\rho}_U^{\ast}\omega(x,y)\|_{G^{-1}}^2$ implies that for every $(x,y) \in \R^2$ we get
\[
\|\omega(x,y)\|_{g^{-1}}^2 = \omega_1^2(x,y) 
\]
if $x<0$ and
\[
\|\omega(x,y)\|_{g^{-1}}^2 = \omega_1^2(x,y) + (\phi(x,y))^2\omega_2^2(x,y)
\]
if $x\geq 0$. 
%It follows that:
%\[
%g(x,y)= dx^2 \text{ if } x<0, \quad g(x,y)= dx^2 + e^{\frac{2}{x}}dy^2 \text{ if } x>0.
%\]
Last, the integration by parts argument discussed in \S \ref{sec:vanishorigin} gives
\[
d_{\cD}^{\ast}\omega(x,y) = -\frac{\partial\omega_1}{\partial x}(x,y)
\]
if $x<0$  and 
\[
d_{\cD}^{\ast}\omega(x,y) = -\frac{\partial\omega_1}{\partial x}(x,y) - \frac{\partial}{\partial y}\left(\phi(x,y)^2\omega_2(x,y)\right)
\]
if $x\geq 0$. 
Finally
\[
\Delta_{\cD}f(x,y) = -\frac{\partial^2 f}{\partial x^2} -\frac{\partial}{\partial y}\phi(x,y)^2\frac{\partial f}{\partial y}.
\]

\begin{remark}\label{rem:regul2}
As in remark \ref{rem:regul1}, here we point out the kind of regularity represented by the algebra $C^{\infty}_c(M,\cD^{\ast})$ in this case: Let $\omega\in C^{\infty}_c(\R^2,\cD^{\ast})$. Then the restriction of $\omega$ to the left half-plane $U_{-} = \{(x,y) \in \R^2 : x < 0\}$ is a smooth map $(x,y)\in U_{-} \mapsto \omega_1^-(x,y) \in \R$ in the usual sense. The restriction of $\omega$ to the closed right half-plane $\bar U_{+} = \{(x,y) \in \R^2 : x\geq 0\}$ is a smooth map $(x,y)\in U_{+} \mapsto (\omega^+_1(x,y), \omega^+_2(x,y)) \in \R^2$. Finally, we have compatibility conditions: the function $\omega_1$, which is equal to $\omega_1^-$ on $U_-$ and $\omega_1^+$ on $\bar U_+$, is a smooth function on $\R^2$, and the function $\phi(x,y)\omega_2(x,y), x>0$ extended by zero to $\R^2$ is a smooth function on $\R^2$. For instance, we can take $\omega_2(x,y)=e^{\alpha/x}, (x,y)\in U_+$ with $\alpha<1$.
\end{remark}

\subsection{The Heisenberg group}\label{sec:Heisenberg}

%\icomment{This distribution has constant rank, so the Laplacian is the one given in your previous papers. Still, I think it is worth to give the full details here, specially to calculate the Riemannian metric. Just because the Heisenberg group is studied by several people.}

Consider the distribution $(\R^3,\cD)$, where the module $\cD$ is generated by the vector fields
\[
X = \partial_x - \frac{1}{2}y\partial_z, \qquad Y = \partial_y + \frac{1}{2}x\partial_z
\]
We have $[X,Y]=\partial_z$ (also $[X,\partial_z]=[Y,\partial_z]=0$), so $\cD$ is not involutive. Moreover, the vector fields $X, Y$ are linearly independent (with respect to $C^{\infty}(\R)$-coefficients), so the module $\cD$ is projective. Whence, for every $(x,y,z) \in \R^3$ the fiber $\cD_{(x,y,z)}$ is isomorphic to $\R^2$, therefore $H = \cup_{(x,y,z)\in \R^3}\cD_{(x.y.z)}$ is a rank $2$ vector subbundle of $T{\R^3}$. Similarly, for every $(x,y,z) \in \R^3$ the fiber $\cD^*_{(x,y,z)}$ is isomorphic to $\R^2$, and $H^* = \cup_{(x,y,z)\in \R^3}\cD^*_{(x.y.z)}$ is a rank $2$ vector subbundle of $T{\R^3}$.

Given a $1$-form $\alpha = \alpha_1(x,y,z)dx + \alpha_2(x,y,z)dy + \alpha_3(x,y,z)dz$ in $\Omega^1(\R^3)$, for every $(x,y,z) \in \R^3$ we find:
\[
ev^{\ast}(\alpha)(x,y,z) = \left(\alpha_1(x,y,z) - \frac{1}{2}\alpha_3(x,y,z)y, \alpha_2(x,y,z) + \frac{1}{2}\alpha_3(x,y,z)x\right)\in \cD^*_{(x,y,z)}\cong \R^2. 
\]
Whence for every $f \in C^{\infty}_c(\R^3)$ we have:
\[
d_{\cD}f = \left(\frac{\partial f}{\partial x}(x,y,z) - \frac{1}{2}\frac{\partial f}{\partial z}(x,y,z)y, \frac{\partial f}{\partial y}(x,y,z) + \frac{1}{2}\frac{\partial f}{\partial z}(x,y,z)x\right)\in \cD^*_{(x,y,z)}\cong \R^2. 
\]

%\[
%ev^{\ast}(\alpha)(x,y,z)([X]_{(x,y,z)}) = \alpha_1(x,y,z) - \frac{1}{2}y \alpha_3(x,y,z)
%\]
%\[
%ev^{\ast}(\alpha)(x,y,z)([Y]_{(x,y,z)}) = \alpha_2(x,y,z) + \frac{1}{2}y\alpha_3(x,y,z)
%\]
%Whence, for every $f \in C^{\infty}_c(\R^3)$ we have
%\[
%d_{\cD}f = ev^{\ast}(df)(x,y,z)=\left(\frac{\partial f}{\partial x}(x,y,z) - \frac{1}{2}y\frac{\partial f}{\partial z}(x,y,z), \quad \frac{\partial f}{\partial y}(x,y,z) + \frac{1}{2}y\frac{\partial f}{\partial z}(x,y,z), \quad \frac{\partial f}{\partial z}(x,y,z)\right).
%\]
Put $U = \R^3$, $E_{U} = \R^3 \times \R^2$ and consider the standard Euclidean metric $G$ on $\R^2$ and the standard orthonormal frame $\{\sigma_1, \sigma_2\}$ of $E_U$ induced by the canonical orthonormal basis of $\R^2$, as well as its dual frame $\{\sigma_1^{\ast}, \sigma_2^{\ast}\}$. The map $\rho_U : E_U \to T\R^3$ sends $\sigma_1 \mapsto X$ and $\sigma_2 \mapsto Y$. We find:
\[
ev^{\ast}(\alpha)(x,y,z)([X]_{(x,y,z)}) = \alpha_1(x,y,z) - \frac{1}{2}y \alpha_3(x,y,z)
\]
\[
ev^{\ast}(\alpha)(x,y,z)([Y]_{(x,y,z)}) = \alpha_2(x,y,z) + \frac{1}{2}x\alpha_3(x,y,z)
\]
It follows that 
\[
\rho_U^{\ast}(\alpha)(x,y,z) = \left(\alpha_1(x,y,z) - \frac{1}{2}y\alpha_3(x,y,z)\right)\sigma_1^{\ast} + \left( \alpha_2(x,y,z) + \frac{1}{2}x\alpha_3(x,y,z)\right)\sigma_2^{\ast}
\]
so the local presentation of $d_{\cD}$ is 
\[
d_{E_U^{\ast}}f(x,y,z) = \left(\frac{\partial f}{\partial x}(x,y,z) - \frac{1}{2}y\frac{\partial f}{\partial z}(x,y,z)\right)\sigma_1^{\ast} + \left( \frac{\partial f}{\partial y}(x,y,z) + \frac{1}{2}x\frac{\partial f}{\partial z}(x,y,z)\right)\sigma_2^{\ast}.
\]
Putting $g$ the Riemannian metric of $\cD$ induced by $G$, for a map $\omega : (x,y,z)\in \R^3\mapsto \omega(x,y,z)=(\omega_1(x,y,z),\omega_2(x,y,z))\in \cD^*_{(x,y,z)}\cong \R^2$ the equality $\|\omega(x,y,z)\|_{g^{-1}}^2 = \|\widehat{\rho}_U^{\ast}\omega(x,y,z)\|_{G^{-1}}^2$ implies that for every $(x,y,z) \in \R^3$ we have
\[
\|\omega(x,y,z)\|_{g^{-1}}^2=\omega_1(x,y,z)^2+\omega_2(x,y,z)^2.
\]
Whence for every $\omega : (x,y,z)\in \R^3\mapsto \omega(x,y,z)=(\omega_1(x,y,z),\omega_2(x,y,z))\in \cD^*_{(x,y,z)}\cong \R^2$ we have:
\[
d^*_{\cD}\omega(x,y,z)= -\left(\frac{\partial }{\partial x}- \frac{1}{2}\frac{\partial }{\partial z}y\right)\omega_1(x,y,z)-\left(\frac{\partial }{\partial y} + \frac{1}{2}\frac{\partial }{\partial z}x\right)\omega_2(x,y,z). 
\]
Finally, we get
\[
\Delta_{\cD} = -\left(\frac{\partial }{\partial x} - \frac{1}{2}\frac{\partial }{\partial z}y\right)^2-\left(\frac{\partial }{\partial y} + \frac{1}{2}\frac{\partial }{\partial z}x\right)^2.
\]

%\begin{multline*}
%\|\alpha(x,y)\|_{g^{-1}}^2 = \left(\alpha_1(x,y,z) - \frac{1}{2}y\alpha_3(x,y,z)\right)^2 + \left( \alpha_2(x,y,z) + \frac{1}{2}x\alpha_3(x,y,z)\right)^2 \\ = \alpha_1^2(x,y,z) + \alpha_2^2(x,y,z) + \frac{1}{4}(x^2 + y^2)\alpha_3^2 (x,y,z) + x\alpha_2(x,y,z)\alpha_3(x,y,z) - y\alpha_1(x,y,z)\alpha_3(x,y,z)
%\end{multline*}

%the equality $\|\alpha(x,y,z)\|_{g^{-1}}^2 = \|\rho_U^{\ast}\alpha(x,y,z)\|_{G^{-1}}^2$ implies that for every $(x,y,z) \in \R^3$ we have
%\begin{multline*}
%\|\alpha(x,y)\|_{g^{-1}}^2 = \left(\alpha_1(x,y,z) - \frac{1}{2}y\alpha_3(x,y,z)\right)^2 + \left( \alpha_2(x,y,z) + \frac{1}{2}x\alpha_3(x,y,z)\right)^2 \\ = \alpha_1^2(x,y,z) + \alpha_2^2(x,y,z) + \frac{1}{4}(x^2 + y^2)\alpha_3^2 (x,y,z) + x\alpha_2(x,y,z)\alpha_3(x,y,z) - y\alpha_1(x,y,z)\alpha_3(x,y,z)
%\end{multline*}
%Whence, away from the union of the $x$-axis with the $y$-axis in $\R^3$, the metric $g$ is: 
%\[
%g(x,y,z) = dx^2 + dy^2 + \frac{4}{x^2 + y^2}dz^2 - \frac{1}{y}dxdz + \frac{1}{x}dydz
%\]

\section{Some analytic properties of the Laplacian}\label{sec:anal}

\subsection{Essential self-adjointness of the Laplacian}\label{sec:selfadj}

In this section we restrict to distributions $(M,\cD)$ such that $M$ is a compact manifold. In this setting we are able to prove the next, fundamental property of our Laplacian.

\begin{thm} 
Let $(M,\cD)$ be a smooth distribution such that $M$ is compact. The Laplacian $\Delta_\cD$, considered as an unbounded operator in the Hilbert space $L^2(M,\mu)$, with domain
$C^\infty(M)$, is essentially self-adjoint.
\end{thm}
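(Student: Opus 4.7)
By Definition~\ref{dfn:horlapl} and Proposition~\ref{prop:adjhordif}, $\Delta_{\cD} = d_{\cD}^{\ast}\circ d_{\cD}$ is symmetric and non-negative on the dense domain $C^{\infty}(M) \subset L^{2}(M,\mu)$, so only uniqueness of the self-adjoint extension is at stake. The plan is to adapt to the present setting the argument of \cite{YK1}, which in the constant-rank case proceeds via the Chernoff self-adjointness criterion \cite{Chernoff}.

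The first step is to reduce essential self-adjointness of the second-order operator $\Delta_{\cD}$ to that of an auxiliary first-order symmetric operator. Form
\[
\cA = \begin{pmatrix} 0 & d_{\cD}^{\ast} \\ d_{\cD} & 0 \end{pmatrix}
\]
on the Hilbert space $\cH = L^{2}(M,\mu) \oplus L^{2}(M,\cD^{\ast},\mu)$, with initial dense domain $C^{\infty}(M) \oplus C^{\infty}(M,\cD^{\ast})$. Then $\cA$ is symmetric by Proposition~\ref{prop:adjhordif}, and $\cA^{2}$ restricts to $\Delta_{\cD}$ on the first summand. Hence essential self-adjointness of $\cA$ would imply that of $\cA^{2}$, and this property passes to each diagonal block, since the blocks are themselves symmetric non-negative operators with dense initial domains.

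To verify finite propagation speed of $\cA$, cover the compact manifold $M$ by finitely many open sets $U_{\alpha}$ carrying local presentations $(E_{U_{\alpha}},\rho_{U_{\alpha}})$ of the Riemannian metric on $\cD$, each equipped with an orthonormal frame $\{\sigma_{j}^{(\alpha)}\}_{j=1}^{d_{\alpha}}$; set $X_{j}^{(\alpha)} = \rho_{U_{\alpha}}(\sigma_{j}^{(\alpha)}) \in \cD|_{U_{\alpha}}$. By Remarks~\ref{rems:horlapl}(c), on each $U_{\alpha}$ the local realisation of $d_{\cD}$ is $d_{E_{U_{\alpha}}^{\ast}} u = \sum_{j}(X_{j}^{(\alpha)} u)\, (\sigma_{j}^{(\alpha)})^{\#}$, so by the computation in Section~\ref{sec:sumbhor} the principal symbol of $\cA$ at $(x,\xi) \in T^{\ast}M$ has norm $\|ev_{x}^{\ast}(\xi)\|_{\cD_{x}^{\ast}}$. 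Smoothness of the Riemannian metric on $\cD^{\ast}$ (Lemma~\ref{lem:D*smooth}) together with compactness of $M$ yields a uniform bound $\|ev_{x}^{\ast}(\xi)\|_{\cD_{x}^{\ast}} \leq C|\xi|_{g_{M}}$ with respect to an auxiliary Riemannian metric $g_{M}$ on $M$. Standard energy estimates then give uniform finite propagation speed for $\cA$, and Chernoff's theorem produces the desired essential self-adjointness of $\cA$.

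The main obstacle is that $\cD^{\ast}$ is only a field of finite-dimensional vector spaces rather than a genuine vector bundle, so Chernoff's theorem does not apply off the shelf. I would handle this by running the wave-equation argument locally through the isometric embeddings $\widehat{\rho}_{U_{\alpha}}^{\ast}: \cD_{y}^{\ast} \hookrightarrow E_{U_{\alpha},y}^{\ast}$, in which setting the standard Chernoff argument for symmetric first-order operators on sections of genuine vector bundles applies directly. The local wave propagators agree on overlaps thanks to the equivalence of local presentations of the Riemannian metric proved in Lemma~\ref{lem:equivRiem}, and a finite partition of unity subordinate to $\{U_{\alpha}\}$ then assembles a global propagator for $\cA$. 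This patching is precisely where the generalised, non-constant-rank setting requires more care than in \cite{YK1}.
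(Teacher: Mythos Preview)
Your overall strategy---reduce to a first-order symmetric operator and invoke Chernoff---matches the paper's, but the paper sidesteps precisely the obstacle you flag rather than trying to overcome it. Instead of forming $\cA$ on $L^2(M,\mu)\oplus L^2(M,\cD^*,\mu)$, the paper replaces $d_{\cD}$ by an operator $D:C^\infty(M)\to C^\infty(M,\mathbb{C}^N)$ with $N=\sum_\alpha d_\alpha$, defined by $(Du)^{\alpha}_j=X^{(\alpha)}_j(\varphi_\alpha u)$ for a partition of unity $\{\varphi_\alpha^2\}$. Now the target is a genuine trivial bundle, so Chernoff applies directly to $A=\begin{pmatrix}0&D^*\\D&0\end{pmatrix}$ and gives essential self-adjointness of $D^*D=\sum_\alpha\varphi_\alpha\Delta_{\cD,U_\alpha}\varphi_\alpha$. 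The link back to $\Delta_{\cD}$ is then made by the IMS localization formula
\[
\Delta_{\cD}=\sum_{\alpha}\varphi_\alpha\Delta_{\cD,U_\alpha}\varphi_\alpha+\tfrac12\sum_\alpha[[\Delta_{\cD},\varphi_\alpha],\varphi_\alpha],
\]
and since the double commutator is a bounded zeroth-order operator, Kato--Rellich finishes the proof. No patching of wave propagators is needed.

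Your proposed workaround has a genuine gap. The isometric embeddings $\widehat{\rho}_{U_\alpha}^*:\cD_y^*\hookrightarrow E_{U_\alpha,y}^*$ have images of \emph{varying} dimension in $y$ (this is the whole point of the non-constant-rank setting), so they do not cut out a subbundle of $E_{U_\alpha}^*$. Consequently there is no reason for the wave evolution of the first-order bundle operator on $E_{U_\alpha}^*$ to preserve this family of subspaces, and there is no well-defined fibrewise projection back onto it. The equivalence of local presentations in Lemma~\ref{lem:equivRiem} goes through a third, larger bundle via Riemannian submersions; it does not furnish an identification of $E_{U_\alpha}^*$ with $E_{U_\beta}^*$ on overlaps that would let you glue propagators. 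The paper's device---absorb everything into a single trivial $\mathbb{C}^N$ and pay the price with IMS and Kato--Rellich---is exactly what buys you a clean application of Chernoff without these issues.
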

 
\begin{proof}
Let $M=\bigcup_{\alpha=1}^k U_\alpha$ be a finite open covering of $M$ such that, for any $\alpha=1,\ldots,k$, there exist a local presentation $(E_{U_\alpha},\rho_{U_\alpha})$ and a local orthonormal frame $(\omega^{(\alpha)}_1,\ldots,\omega^{(\alpha)}_{d_\alpha})$ of $E_{U_\alpha}$. As mentioned above, the restriction of $\Delta_\cD$ to $U_\alpha$ is written as
\[
\Delta_{\cD,U_\alpha}=\sum_{j=1}^{d_\alpha}(X^{(\alpha)}_j)^* X^{(\alpha)}_j,
\]
where $X^{(\alpha)}_j=\rho_{U_\alpha}(\omega^{(\alpha)}_j)\in \cD\left|_{U_\alpha}\right.$, $j=1,\ldots, d_\alpha$.

Take a partition of unity subordinate to this covering, that is, a family $\{\varphi_\alpha\in C^\infty(M), \alpha =1, \ldots, k\}$ of smooth functions on $M$ such that $0\leq \varphi_\alpha(x) \leq 1$ for any $x\in M$, $\operatorname{supp}\varphi_\alpha\subset U_\alpha$ and $\sum_{\alpha=1}^k\varphi^2_\alpha(x)=1$ for any $x\in M$. 

For $N=\sum_{\alpha=1}^kd_\alpha$, denote by $C^\infty(M,\mathbb C^N)$ the space of smooth functions on $M$ with values in the standard Hermitian space $\mathbb C^N$, whose elements are written as $\{v^{(\alpha)}_j, \alpha=1,\ldots,k, j=1,\ldots, d_\alpha\}$. 
Denote by $L^2(M,\mathbb C^N,\mu)$ the associated Hilbert space of square integrable functions. Consider the operator $D: C^\infty(M)\to C^\infty(M,\mathbb C^N)$ given, for $u\in C^\infty(M)$, by
\[
(Du)^{\alpha}_j=  X^{(\alpha)}_j(\varphi_\alpha u), \quad \alpha=1,\ldots,k , \quad j=1,\ldots, d_\alpha.
\]

On the Hilbert space $H=L^2(M,\mu)\oplus L^2(M,\mathbb C^N,\mu)$, consider the operator $A$, with domain $D(A)=C^\infty(M)\oplus
C^\infty(M,\mathbb C^N)$, given by the matrix
\[
A=\begin{pmatrix} 0 & D^*\\
D & 0
\end{pmatrix}.
\]
It is clear that the operator $A$ is symmetric. Applying \cite[Theorem
2.2]{Chernoff} to the skew-symmetric operator $L=iA$, we obtain that
every power of $A$ is essentially self-adjoint. Since
\[
A^2=\begin{pmatrix} D^*D & 0\\
0 & DD^*
\end{pmatrix},
\]
the operator 
\[
D^*D=\sum_{\alpha=1}^m\sum_{j=1}^{d_\alpha} \varphi_\alpha (X^{(\alpha)}_j)^* X^{(\alpha)}_j\varphi_\alpha=\sum_{\alpha=1}^m \varphi_\alpha \Delta_{\cD,U_\alpha} \varphi_\alpha
\]
is essentially self-adjoint on $C^\infty(M)$.

Now we use the IMS localization formula:
\[
\Delta_{\cD}=\sum_{\alpha=1}^k \varphi_\alpha \Delta_{\cD,U_\alpha}\varphi_\alpha+\frac 12 \sum_{\alpha=1}^k [[\Delta_{\cD},\varphi_\alpha],\varphi_\alpha].
\]

Since the operator $\frac 12 \sum_{\alpha=1}^k [[\Delta_{\cD},\varphi_\alpha],\varphi_\alpha]$ is bounded, by the Kato-Rellich theorem (see \cite[Ch. V, Thm 4.4]{Kato}), it follows that the operator $\Delta_{\cD}$ is essentially self-adjoint on $C^\infty(M)$.

\end{proof}

\subsection{Longitudinal hypoellipticity of the Laplacian}\label{sec:hypoell}

%\ycomment{Below I put a very rough draft on hypoellipticity results. I just took the corresponding part of my Math. Sb. paper and corrected, according our current setting. Actually, as you can see, there are only two places, which should be corrected. So, probably, this part should be made much shorter. Currently, this is just a complete proof.}

In this section we prove the hypoellipticity of the horizontal Laplacian $\Delta_{\cD}$, for a distribution on a compact manifold $M$. To this end, we will make substantial use of the viewpoint on $\Delta_{\cD}$ as a longitudinal differential operator. So, throughout this section we fix a smooth distribution $(M,\cD)$ such that $M$ is compact and its minimal Lie-Rinehart algebra $\cF = \cU(\cD)$ is a foliation. Using local presentations of the given distribution, we are able in \S \ref{sec:longhypoell} to follow the line of proof for hypoellipticity given in \cite{YK1}, appropriately adapted to our context.

\subsubsection{Longitudinal pseudodifferential calculus}\label{s:psi}

We will need the classes $\Psi^m({\mathcal F})$ of longitudinal pseudodifferential operators. Operators as such were constructed in \cite{AS2} as multipliers of the foliation $C^{\ast}$-algebra. Here, as in \cite[\S 3]{YK1}, we will consider their image by the trivial representation to $L^2(M,\mu)$. In this section we recall the following results from \cite[\S 3]{YK1}, that are used in \S \ref{sec:longhypoell} in order to prove hypoellipticity.

One can define the longitudinal principal symbol map
$\sigma_m : \Psi^m({\mathcal F})\to C(\mathcal F^*\setminus 0)$. Here $\mathcal F^*$ denotes the cotangent bundle of $\mathcal F$ (see Section \ref{sec:dual}). 
%defined as $\mathcal F^*=\bigsqcup_{x\in M}\mathcal F^*_x$, where, for any $x\in M$, $\mathcal F^*_x$ is the dual space of $\mathcal F_x$, the fiber of $\mathcal F$ at $x$. Observe that $\mathcal F^*$ is not a vector bundle in the usual sense. One can show that $\mathcal F^*$ is a locally compact topological space.

\begin{thm} \label{t:comp}
Given $P_i\in \Psi^{m_i}({\mathcal F})$, $i=1,2$, their composition
$P=P_1\circ P_2$ is in $\Psi^{m_1+m_2}({\mathcal F})$ and
$\sigma_{m_1+m_2}(P)=\sigma_{m_1}(P_1)\sigma_{m_2}(P_2)$.
\end{thm}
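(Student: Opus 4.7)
The plan is to reduce the statement to known results about the longitudinal pseudodifferential calculus of singular foliations constructed in \cite{AS2}. The key point is that the classes $\Psi^m(\cF)$ are defined using the local machinery of bi-submersions of $\cF$ (following the approach of \cite{AS1}), on each of which a longitudinal pseudodifferential operator is represented as an operator-valued symbol that is smoothing along the bi-submersion fibers. First, I would cover the support of both $P_1$ and $P_2$ by a locally finite family of bi-submersions $(V_\alpha, s_\alpha, t_\alpha)$ of $\cF$. Using a partition of unity subordinate to this cover, decompose $P_i = \sum_{\alpha} P_{i,\alpha}$ where each $P_{i,\alpha}$ is supported in a single bi-submersion.

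Second, I would apply the classical composition theorem of H\"ormander for pseudodifferential operators on each bi-submersion. On a chart where $V_\alpha$ is locally trivialised as a fiber bundle, the operators $P_{i,\alpha}$ are families of classical pseudodifferential operators along the fibers (depending smoothly on the base parameter), and their composition in this local picture satisfies the standard product rule: the composition is again in the appropriate symbol class of order $m_1 + m_2$, and its principal symbol is the product of the principal symbols. Then I would show that compositions on overlapping bi-submersions are compatible under the equivalence relation that glues $\Psi^\bullet(\cF)$ together; this compatibility is essentially guaranteed by the functorial property of composition in the convolution algebra of $\cF$.

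Third, to get the symbol formula, I would use that the longitudinal principal symbol map $\sigma_m$ is defined as the image of $\Psi^m(\cF)/\Psi^{m-1}(\cF)$ in $C(\cF^*\setminus 0)$ by restriction to the leafwise cotangent bundle. Since the classical formula $\sigma(P_1 P_2) = \sigma(P_1)\sigma(P_2)$ holds on each bi-submersion chart, and since the restriction to $\cF^*$ commutes with the projection from the chart cotangent bundles, the global identity $\sigma_{m_1+m_2}(P) = \sigma_{m_1}(P_1)\sigma_{m_2}(P_2)$ follows.

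The main obstacle will be to verify cleanly that the bi-submersion compositions patch together into a single element of $\Psi^{m_1+m_2}(\cF)$, independently of the choice of partition of unity and of bi-submersions. However, this is exactly the type of computation that is already carried out for the convolution algebra of $\cF$ in \cite{AS2}; the classical pseudodifferential machinery only adds the commuting symbol calculus on top of it, so no essentially new difficulty arises. Thus the bulk of the proof reduces to citing the local composition theorem for operators with smooth symbols and invoking the patching results of \cite{AS2}.
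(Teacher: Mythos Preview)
The paper does not actually prove Theorem~\ref{t:comp}. It is stated in \S\ref{s:psi} as one of several results \emph{recalled} from \cite[\S 3]{YK1} (and ultimately from the longitudinal pseudodifferential calculus of \cite{AS2}); no argument is given in the present paper. So there is no ``paper's own proof'' to compare your proposal against.

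That said, your sketch is in the right spirit: the composition and symbol properties in \cite{AS2} are indeed established by working locally on bi-submersions, using the classical pseudodifferential composition formula along the fibers, and then checking compatibility under the equivalence of bi-submersions so that the local pieces glue. One point to be careful about: the classes $\Psi^m(\cF)$ used in this paper are the \emph{images under the trivial representation} on $L^2(M,\mu)$ of the multipliers constructed in \cite{AS2} (see the opening paragraph of \S\ref{s:psi}), so strictly speaking one first proves the statement at the level of multipliers of the foliation $C^*$-algebra and then pushes it forward by the representation, rather than patching directly on $M$. Your outline slightly conflates these two levels, but the substance is correct and matches the approach of \cite{AS2}.
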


\begin{thm}
Given $P_i\in \Psi^{m_i}({\mathcal F})$, $i=1,2$, the commutator
$[P_1, P_2]$ is in $\Psi^{m_1+m_2-1}({\mathcal F})$.
\end{thm}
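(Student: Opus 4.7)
The plan is to reduce this to the composition theorem (Theorem~\ref{t:comp}) combined with a principal-symbol vanishing argument. First I would apply Theorem~\ref{t:comp} twice: since $P_1 P_2 \in \Psi^{m_1+m_2}(\mathcal F)$ and $P_2 P_1 \in \Psi^{m_1+m_2}(\mathcal F)$, the difference $[P_1,P_2] = P_1 P_2 - P_2 P_1$ lies in $\Psi^{m_1+m_2}(\mathcal F)$ as an immediate consequence of the fact that $\Psi^{m_1+m_2}(\mathcal F)$ is a vector space. So the only thing left to check is that the order actually drops by one.

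Next I would compute the principal symbol of the commutator at top order. By the multiplicativity part of Theorem~\ref{t:comp},
\[
\sigma_{m_1+m_2}([P_1,P_2]) = \sigma_{m_1}(P_1)\,\sigma_{m_2}(P_2) - \sigma_{m_2}(P_2)\,\sigma_{m_1}(P_1).
\]
Now both $\sigma_{m_1}(P_1)$ and $\sigma_{m_2}(P_2)$ are elements of $C(\mathcal F^* \setminus 0)$, i.e.\ continuous scalar functions on the (commutative) dual space $\mathcal F^*$. Pointwise multiplication of such functions is commutative, so the right-hand side vanishes identically on $\mathcal F^* \setminus 0$.

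Finally I would invoke the standard exactness property of a pseudodifferential calculus: the principal symbol map fits into a short exact sequence
\[
0 \longrightarrow \Psi^{m-1}(\mathcal F) \longrightarrow \Psi^{m}(\mathcal F) \stackrel{\sigma_m}{\longrightarrow} C(\mathcal F^*\setminus 0)_{\text{hom.\ of deg.\ }m} \longrightarrow 0,
\]
which is built into the construction of $\Psi^\bullet(\mathcal F)$ in \cite{AS2} (this is the longitudinal analogue of the classical fact in the Kohn--Nirenberg calculus). Vanishing of $\sigma_{m_1+m_2}([P_1,P_2])$ therefore places $[P_1,P_2]$ in $\Psi^{m_1+m_2-1}(\mathcal F)$, as required.

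The only potentially delicate point is the last step, namely confirming that the longitudinal calculus of a singular foliation, as developed in \cite{AS2} and used here at the level of the trivial representation on $L^2(M,\mu)$, satisfies the full short exact sequence above (in particular that kernel of $\sigma_m$ is exactly $\Psi^{m-1}(\mathcal F)$, not just contained in some slightly larger residual class). This is the expected behaviour and is implicit in \cite{AS2}, but it is the piece of the argument that is not formally automatic from Theorem~\ref{t:comp} alone. Everything else is linear algebra and the commutativity of $C(\mathcal F^*\setminus 0)$.
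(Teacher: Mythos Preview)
The paper does not actually prove this theorem: it is stated in \S\ref{s:psi} as one of several results \emph{recalled} from \cite[\S 3]{YK1} (which in turn rests on \cite{AS2}), with no proof given. So there is no ``paper's own proof'' to compare against.

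Your argument is the standard one and is correct. The reduction to vanishing of the top principal symbol via Theorem~\ref{t:comp} and commutativity of $C(\mathcal F^*\setminus 0)$ is exactly how this is proved in the classical calculus, and the longitudinal version goes the same way. You are also right to flag the exactness of the symbol sequence as the one substantive ingredient not formally contained in Theorem~\ref{t:comp}; that exactness is indeed established in \cite{AS2} (it is the content of their Proposition~4.3/Theorem~4.4 setup, where the kernel of the principal symbol of order $m$ is shown to be $\Psi^{m-1}(\mathcal F)$), so your appeal to it is legitimate.
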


An operator $P\in \Psi^{m}({\mathcal F})$ is said to be
longitudinally elliptic, if its longitudinal principal symbol
$\sigma_{m}(P)$ is invertible.

\begin{thm} \label{t:reg}
Given  a longitudinally elliptic operator $P\in \Psi^{m}({\mathcal
F})$, there is an operator $Q\in \Psi^{-m}({\mathcal F})$ such that
$1-P\circ Q$ and $1-Q\circ P$ are in $\Psi^{-\infty}({\mathcal F})$.
\end{thm}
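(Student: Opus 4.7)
The plan is to carry out the standard parametrix construction adapted to the longitudinal calculus $\Psi^\bullet(\mathcal{F})$, using only the three tools made available: the symbol map $\sigma_m$, the composition formula of Theorem \ref{t:comp}, and the commutator/lower-order remainder behaviour that comes with it. First I would invert the principal symbol. Since $P$ is longitudinally elliptic, $\sigma_m(P)$ is an invertible continuous function on $\mathcal{F}^*\setminus 0$, so the pointwise inverse $\sigma_m(P)^{-1}$ makes sense on $\mathcal{F}^*\setminus 0$ and is homogeneous of degree $-m$. Multiplying by a smooth excision function that vanishes near the zero section, one obtains a bona fide homogeneous symbol of order $-m$, and then by the standard surjectivity of the principal symbol map onto such symbols (built into the construction of $\Psi^\bullet(\mathcal{F})$ in \cite{AS2}), one lifts it to an operator $Q_0\in\Psi^{-m}(\mathcal{F})$ with $\sigma_{-m}(Q_0)=\sigma_m(P)^{-1}$.

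Next I would iterate to remove successive lower-order remainders. By Theorem~\ref{t:comp},
\[
\sigma_0(P\circ Q_0)=\sigma_m(P)\sigma_{-m}(Q_0)=1,
\]
so $R_1:=1-P\circ Q_0\in\Psi^{-1}(\mathcal{F})$. Setting $Q_N:=Q_0\circ(1+R_1+R_1^2+\cdots+R_1^N)$, one gets $1-P\circ Q_N=R_1^{N+1}\in\Psi^{-(N+1)}(\mathcal{F})$ again by Theorem~\ref{t:comp}. The main technical point is to make sense of the limit: I would form an asymptotic sum $Q_r\sim Q_0\circ\sum_{j\geq 0}R_1^j$ in the calculus $\Psi^{-m}(\mathcal{F})$, i.e. choose $Q_r\in\Psi^{-m}(\mathcal{F})$ such that, for each $N$, $Q_r-Q_0\circ\sum_{j=0}^N R_1^j\in\Psi^{-m-N-1}(\mathcal{F})$. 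Then $1-P\circ Q_r\in\Psi^{-\infty}(\mathcal{F})$, giving a right parametrix. Exactly the same reasoning applied to $Q_0\circ P=1-R'_1$ with $R'_1\in\Psi^{-1}(\mathcal{F})$ produces a left parametrix $Q_\ell\in\Psi^{-m}(\mathcal{F})$ with $1-Q_\ell\circ P\in\Psi^{-\infty}(\mathcal{F})$.

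Finally, I would reconcile the left and right parametrices by the classical identity
\[
Q_\ell-Q_r=Q_\ell\circ(1-P\circ Q_r)-(1-Q_\ell\circ P)\circ Q_r,
\]
which, by Theorem~\ref{t:comp}, lies in $\Psi^{-\infty}(\mathcal{F})$. Hence $Q_r$ itself is also a left parametrix modulo $\Psi^{-\infty}(\mathcal{F})$, and one may take $Q:=Q_r$ (or equivalently $Q_\ell$) to obtain the required two-sided parametrix.

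The only real obstacle is the asymptotic summation step: the ordinary Borel/asymptotic-completeness argument for $\Psi^\bullet$ on a smooth manifold must hold in $\Psi^\bullet(\mathcal{F})$ as constructed in \cite{AS2}. Since $M$ is compact, one can realise this concretely by choosing, for each $j$, a representative $Q_0\circ R_1^j\in\Psi^{-m-j}(\mathcal{F})$, introducing a partition of unity and cut-offs on the symbol side as in \cite[\S 3]{YK1}, and invoking the closure of $\Psi^\bullet(\mathcal{F})$ under such asymptotic sums; once this is in place, the rest of the argument is the purely formal Neumann-series manipulation described above.
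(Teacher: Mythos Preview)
The paper does not actually prove Theorem~\ref{t:reg}: it is one of several results in \S\ref{s:psi} that are merely \emph{recalled} from \cite[\S 3]{YK1} (and ultimately from the construction of the longitudinal calculus in \cite{AS2}). So there is no ``paper's own proof'' to compare your attempt with.

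That said, your outline is exactly the standard parametrix construction and is the argument one would expect to find behind the citation: lift $\sigma_m(P)^{-1}$ to some $Q_0\in\Psi^{-m}(\mathcal F)$ via surjectivity of the principal symbol map, use Theorem~\ref{t:comp} to get $1-PQ_0\in\Psi^{-1}(\mathcal F)$, form the Neumann series asymptotically, and reconcile left and right parametrices by the usual algebraic identity. You correctly flag the one genuine input beyond Theorems stated here, namely asymptotic completeness of $\Psi^\bullet(\mathcal F)$ (and surjectivity of $\sigma_m$), both of which are part of the calculus built in \cite{AS2}. With those granted, your argument is complete.
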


For any $s$, we fix a longitudinally elliptic operator $\Lambda_s$ of order $s$. Without loss of generality, we can assume that $\Lambda_s$ is formally self-adjoint and
\[
\Lambda_s\circ \Lambda_{-s}=I+R_s, \quad \Lambda_{-s}\circ
\Lambda_{s}=I+R^\prime_s, \quad R_s, R^\prime_s\in \Psi^{-\infty}(\mathcal F).
\]

\begin{definition}
For $s\geq 0$, the Sobolev space $H^s(\mathcal F)$ is defined as the
domain of $\Lambda_s$ in $L^2(M)$:
\[
H^s(\mathcal F)=\{u\in L^2(M) : \Lambda_su\in L^2(M)\}.
\]
 The norm in $H^s(\mathcal F)$ is
defined by the formula
\[
\|u\|^2_{s}=\|\Lambda_su\|^2+\|u\|^2, \quad u\in H^s(\mathcal F).
\]

For $s<0$, $H^s(\mathcal F)$ is defined as the dual space of
$H^{-s}(\mathcal F)$.
\end{definition}

\begin{thm}\label{t:action_in_Sobolev}
For any $s\in \mathbb R$, an operator $A\in \Psi^m(\mathcal F)$
determines a bounded operator $A : H^s(\mathcal F)\to H^{s-m}(\mathcal F)$.
\end{thm}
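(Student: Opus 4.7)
The plan is to reduce the general statement to the $L^2$-boundedness of zeroth-order longitudinal pseudodifferential operators, and then to establish that $L^2$-boundedness via the classical Hörmander-type parametrix argument, carried out within the calculus recalled from \cite{AS2}.

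First I would fix $s,m\in \R$ and $A\in \Psi^m(\cF)$. Using the formally self-adjoint longitudinally elliptic operators $\Lambda_t$ together with the parametrix identity $\Lambda_{-s}\circ\Lambda_s = I+R'_s$, I write
\[
\Lambda_{s-m}\circ A \;=\; \bigl(\Lambda_{s-m}\circ A\circ \Lambda_{-s}\bigr)\circ \Lambda_s \;+\; \Lambda_{s-m}\circ A\circ R'_s,
\]
where $R'_s\in \Psi^{-\infty}(\cF)$. By Theorem~\ref{t:comp} the operator $B:=\Lambda_{s-m}\circ A\circ \Lambda_{-s}$ lies in $\Psi^0(\cF)$, and the remainder $\Lambda_{s-m}\circ A\circ R'_s$ lies in $\Psi^{-\infty}(\cF)$. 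Since by definition $\|u\|_s^2 = \|\Lambda_s u\|^2 + \|u\|^2$ and $\|Au\|_{s-m}^2 = \|\Lambda_{s-m}Au\|^2 + \|Au\|^2$, the desired bound $\|Au\|_{s-m}\lesssim \|u\|_s$ is a consequence of the $L^2$-boundedness of $B$, of the smoothing remainder, and of $A$ regarded on spaces of lower index (handled by induction on $m$, with the case $m<0$ obtained by duality from the case $m>0$ since $H^{-s}(\cF)$ is the dual of $H^s(\cF)$).

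Second, for an arbitrary $B\in \Psi^0(\cF)$ I would run the usual square-root/parametrix construction. Pick $C>\sup|\sigma_0(B)|^2$; then $c:=\sqrt{C-\overline{\sigma_0(B)}\,\sigma_0(B)}$ is a non-negative symbol. Using Theorem~\ref{t:comp} inductively one produces $B_N\in \Psi^0(\cF)$ with principal symbol $c$ such that
\[
C\cdot I - B^*\circ B \;=\; B_N^*\circ B_N + R_N,\qquad R_N\in \Psi^{-N}(\cF),
\]
and an asymptotic summation yields $B_\infty\in \Psi^0(\cF)$ with $R_\infty\in \Psi^{-\infty}(\cF)$. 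The identity $\|Bu\|^2 = C\|u\|^2 - \|B_\infty u\|^2 + \langle R_\infty u,u\rangle$ then gives $\|B\|_{L^2\to L^2}^2 \leq C + \|R_\infty\|_{L^2\to L^2}$, as soon as the last term is finite.

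The main obstacle is therefore not the algebraic manipulation but two structural facts about the longitudinal calculus of \cite{AS2} which the whole scheme depends on: (i) every $R\in \Psi^{-\infty}(\cF)$ acts as a bounded operator on $L^2(M,\mu)$ under the trivial representation, and (ii) the symbolic calculus is closed under taking square roots of strictly positive order-zero symbols, so that the iteration stays inside $\Psi^0(\cF)$. Both properties are built into the construction of the longitudinal calculus but need to be verified in the singular foliation setting; once they are granted, the argument is essentially the one transcribed from \cite[Section 3]{YK1} (which treats the constant-rank case) to the generalised smooth distributions of the present article.
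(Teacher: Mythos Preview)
The paper does not actually prove this theorem: in \S\ref{s:psi} all of the results on the longitudinal pseudodifferential calculus, including Theorem~\ref{t:action_in_Sobolev}, are merely \emph{recalled} from \cite[\S 3]{YK1} (which in turn builds on \cite{AS2}). There is therefore no ``paper's own proof'' to compare your proposal against.

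That said, your sketch is the standard route one would expect to find in \cite{YK1}: reduce to the order-zero case via $\Lambda_{s-m}A\Lambda_{-s}$ and then run the H\"ormander square-root argument inside the calculus. The two structural ingredients you flag at the end --- $L^2$-boundedness of smoothing operators under the trivial representation, and closure of the symbol class under square roots (or at least asymptotic completeness of the calculus) --- are exactly what must be supplied by the machinery of \cite{AS2} and \cite{YK1}, and you are right to isolate them as the nontrivial inputs. One small quibble: your remark that the bound on $\|Au\|^2$ (without the $\Lambda_{s-m}$) is ``handled by induction on $m$'' is a bit loose, since $m$ is a real parameter; in practice one simply observes that $A\in \Psi^m(\cF)\subset \Psi^{\max(m,0)}(\cF)$ and that $\Lambda_0$ can be taken to be the identity, so the term $\|Au\|$ is controlled by the same reduction with $s$ replaced by $s$ and $s-m$ replaced by $0$.
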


\begin{prop}\label{p:density_in_Sobolev}
For $s\in \mathbb Z$, the space $C^\infty(M)$ is dense in
$H^s(\mathcal F)$.
\end{prop}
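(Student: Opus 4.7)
The plan is to handle the three cases $s=0$, $s>0$, and $s<0$ separately, with $s>0$ carrying the main analytic content.

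For $s=0$, one has $H^0(\cF)=L^2(M)$ by definition, and density of $C^\infty(M)$ in $L^2(M)$ on a compact manifold is classical.

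For $s>0$ integer, I would construct a family of Friedrichs-type mollifiers $T_\epsilon:L^2(M)\to C^\infty(M)$, $\epsilon>0$, as follows. Fix a finite atlas $\{(U_\alpha,\phi_\alpha)\}$ of $M$ with a subordinate partition of unity $\{\chi_\alpha\}$, and set
\[
T_\epsilon u=\sum_\alpha \chi_\alpha\cdot \phi_\alpha^{\ast}\bigl(J_\epsilon\,(\phi_\alpha)_{\ast}(\chi_\alpha u)\bigr),
\]
where $J_\epsilon$ denotes standard Euclidean mollification by convolution with a smoothed bump. By construction $T_\epsilon u\in C^\infty(M)$, the family $\{T_\epsilon\}$ is uniformly bounded on $L^2(M)$, and $T_\epsilon u\to u$ in $L^2(M)$ for every $u\in L^2(M)$. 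The crux of the proof is a Friedrichs-type commutator estimate: for every $u\in H^s(\cF)$,
\[
[\Lambda_s,T_\epsilon]\,u\longrightarrow 0 \quad\text{in}\quad L^2(M) \text{ as } \epsilon\to 0,
\]
together with uniform $H^s(\cF)\to L^2(M)$ boundedness of $\{[\Lambda_s,T_\epsilon]\}$. Granted this, for $u\in H^s(\cF)$ one writes $\Lambda_s T_\epsilon u=T_\epsilon\Lambda_s u+[\Lambda_s,T_\epsilon]u$, and since $T_\epsilon\Lambda_s u\to \Lambda_s u$ in $L^2(M)$ (because $\Lambda_s u\in L^2(M)$) while the commutator tends to zero in $L^2(M)$, it follows that $T_\epsilon u\to u$ in $H^s(\cF)$, yielding density.

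For $s<0$ integer, I would use duality. The continuous inclusion $i:H^{-s}(\cF)\hookrightarrow L^2(M)$ (with $-s>0$) has dense range, because $C^\infty(M)\subset H^{-s}(\cF)\subset L^2(M)$ and $C^\infty(M)$ is dense in $L^2(M)$. Define $T:L^2(M)\to H^s(\cF)=H^{-s}(\cF)^{\ast}$ by $(Tf)(g)=\langle g,f\rangle_{L^2}$ for $f\in L^2(M)$ and $g\in H^{-s}(\cF)$. Then $T$ is bounded; it is injective because $Tf=0$ forces $\langle g,f\rangle_{L^2}=0$ for all $g\in C^\infty(M)$, hence $f=0$; and it has dense range because any $\phi\in (H^s(\cF))^{\ast}=H^{-s}(\cF)$ annihilating $T(L^2(M))$ satisfies $\langle \phi,f\rangle_{L^2}=0$ for all $f\in L^2(M)$, forcing $\phi=0$. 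Thus $L^2(M)$ sits densely in $H^s(\cF)$, and composing with density of $C^\infty(M)$ in $L^2(M)$ yields the proposition.

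The main obstacle is the Friedrichs commutator estimate for $\Lambda_s\in\Psi^s(\cF)$. On $\mathbb R^n$ the analogous classical result for a standard pseudodifferential operator $A$ is proved by an integration-by-parts analysis of the Schwartz kernel of $[A,J_\epsilon]$, exploiting the $O(\epsilon)$-concentration of $J_\epsilon$ about the diagonal together with the smoothness of the coefficients of $A$; the resulting kernel is, up to a uniformly bounded remainder of order $\operatorname{ord}(A)-1$, small with $\epsilon$. In our setting one localizes via the partition of unity $\{\chi_\alpha\}$ and invokes the local structure of elements of $\Psi^s(\cF)$ provided in \cite{AS2}, which within each chart $U_\alpha$ expresses $\Lambda_s$ through kernels that are smooth along the leaves of $\cF$. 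This reduces $[\Lambda_s,T_\epsilon]$ to a finite sum of local contributions to which the classical Euclidean Friedrichs estimate applies leafwise, providing the required uniform bound and strong convergence.
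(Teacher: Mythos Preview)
The paper does not supply a proof of this proposition. It is stated in \S\ref{s:psi} among several results explicitly \emph{recalled} from \cite[\S 3]{YK1}, without argument. There is therefore no in-paper proof to compare your proposal against.

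On its own merits, your outline follows the standard Friedrichs-mollifier route and the duality argument for negative $s$ is clean. The only point that is genuinely delicate is the commutator estimate $[\Lambda_s,T_\epsilon]u\to 0$ in $L^2(M)$ for $\Lambda_s\in\Psi^s(\cF)$: the longitudinal calculus of \cite{AS2} is built on bisubmersions of a \emph{singular} foliation, not on charts in which the operator is a classical leafwise $\Psi$DO, so your phrase ``the classical Euclidean Friedrichs estimate applies leafwise'' hides real work. For integer $s>0$ one can sidestep this by choosing $\Lambda_s$ to be a longitudinal \emph{differential} operator (a polynomial in generating vector fields of $\cF$ and their adjoints, which is possible since $M$ is compact and $\cF$ is finitely generated); then $[\Lambda_s,T_\epsilon]$ reduces to commutators of vector fields with mollifiers, where the classical Friedrichs lemma applies directly. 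That refinement would close the gap in your sketch.
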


\subsubsection{Longitudinal hypoellipticity}\label{sec:longhypoell}

%\icomment{Since the hypoellipticity is proven by the same line of thought as in your first paper, basically in this section I'm just recalling it. Some proofs do use the local presentations we introduce here, but I put all this material in the appendix, in fact shortened substancially. I think it is better this way just for reasons of presentation: The flow of ideas is not interrupted by technicalities, so it is more readable.}

As above, let $M$ be a compact manifold and $(M,\cD)$ be a smooth distribution such that $\cF=\cU(\cD)$ is a foliation. Let $g$ be a Riemannian structure on $\cD$ and $\mu$ a positive smooth density on $M$. We will use classes $\Psi^m({\mathcal F})$ of longitudinal pseudodifferential operators and the corresponding scale $H^s(\mathcal F)$ of longitudinal Sobolev space associated with $\mathcal F$ (\cf \S \ref{s:psi}).

As in \cite{YK1}, we follow the line of proof of hypoellipticity for sums of squares operators given in \cite[Chapter II, \S 5]{Treves1}. (The specific hypoellipticity result there is \cite[Chapter II, Cor. 5.1]{Treves1}.) In fact, the proof is as the one given in \cite{YK1}, so here we restrict to describing it.

First, as in \cite[Chapter II, Lemma 5.2]{Treves1} we state subelliptic estimates for the operator $\Delta_\cD$. The proof of Theorem \ref{t:Hs-hypo} below, is exactly as the proof \cite[Thm. 2.1]{YK1}, except for two lemmas that need to be adapted to the current setting. We will give these lemmas in Appendix \ref{app}.

\begin{thm}\label{t:Hs-hypo}
There exists $\epsilon>0$ such that, for any $s\in \mathbb R$, we have
\[
\|u\|_{s+\epsilon}^2 \leq
C_s\left(\|\Delta_\cD u\|_s^2+\|u\|_s^2\right), \quad u\in C^\infty(M),
\]
where $C_s>0$ is some constant.
\end{thm}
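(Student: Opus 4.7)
My plan is to follow the standard Kohn--Hörmander--Trèves sum-of-squares strategy, adapted to the longitudinal Sobolev scale $H^s(\cF)$ of the ambient foliation $\cF=\cU(\cD)$, mirroring the line of proof in \cite{YK1}. I would first reduce to the case $s=0$ using the longitudinal pseudodifferential calculus: apply $\Lambda_s$, write $\|u\|_{s+\epsilon}^2\lesssim \|\Lambda_s u\|_\epsilon^2+\|u\|^2$, and use that $[\Delta_\cD,\Lambda_s]\in\Psi^{s+1}(\cF)$ by Theorem~\ref{t:comp} and the commutator theorem for $\Psi^m(\cF)$, so $\Delta_\cD\Lambda_s u=\Lambda_s\Delta_\cD u + (\text{lower order})$. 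Density of $C^\infty(M)$ in $H^s(\cF)$ (Proposition~\ref{p:density_in_Sobolev}) then lets the $s=0$ estimate propagate to arbitrary $s$.

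To prove the $s=0$ estimate, take a finite open cover $\{U_\alpha\}$ of the compact manifold $M$ admitting local presentations $(E_{U_\alpha},\rho_{U_\alpha})$ with orthonormal frames $\omega_j^{(\alpha)}$, and set $X_j^{(\alpha)}=\rho_{U_\alpha}(\omega_j^{(\alpha)})\in\cD\left|_{U_\alpha}\right.$. By \eqref{e:sum_of_sq} the restriction $\Delta_{\cD,U_\alpha}$ is a sum of squares, so integration by parts, combined with a subordinate partition of unity $\{\varphi_\alpha^2\}$, yields the basic identity
\[
\sum_{\alpha,j}\|\varphi_\alpha X_j^{(\alpha)} u\|_0^2 \;=\; (\Delta_\cD u,u) + O(\|u\|_0^2)
\]
after absorbing commutators $[\Delta_\cD,\varphi_\alpha]$ (a first-order longitudinal operator). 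Cauchy--Schwarz gives $\sum_{\alpha,j}\|\varphi_\alpha X_j^{(\alpha)} u\|_0^2 \lesssim \|\Delta_\cD u\|_0^2+\|u\|_0^2$, which is the base-case estimate on the vector fields generating $\cD$.

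The heart of the argument is the bootstrap: since $\cF=\cU(\cD)$ is the minimal Lie--Rinehart algebra containing $\cD$ and is locally finitely generated, every $Y\in\cF$ is, locally, a finite combination of iterated brackets $[X_{j_1}^{(\alpha)},[X_{j_2}^{(\alpha)},\ldots,X_{j_r}^{(\alpha)}]\ldots]$ of bounded depth $r$. Using the two adapted lemmas whose proofs are indicated in Appendix~\ref{app}---essentially a commutator lemma stating that if $\|X_j u\|_{s}$ is controlled then $\|[X_i,X_j]u\|_{s-1/2}$ is controlled (via a Plancherel-type argument in the longitudinal dual $\cF^*$), and a lemma asserting that the $X_j^{(\alpha)}$ together with their iterated brackets span $\cF$ in the longitudinal symbol sense---I would iterate the base estimate at most $r$ times, each step losing a half derivative, to control $\|Yu\|_{-(r-1)/2}$ for every $Y\in\cF$. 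Because longitudinal ellipticity of $\Lambda_{1}$ with respect to $\cF$ is recovered from a basis of such $Y$'s, Theorem~\ref{t:reg} converts this into control of $\|u\|_\epsilon$ with $\epsilon=2^{-r}$ (or any comparable exponent), yielding the theorem.

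The main obstacle, and the reason the authors relegate two lemmas to the appendix, is making the commutator/bracket step rigorous in the non-constant-rank setting: the generators $X_j^{(\alpha)}$ are not everywhere linearly independent, the transitions $A_{U_\alpha,U_\beta}$ between overlapping local presentations are only surjective morphisms of bundles (not isomorphisms), and the fibers $\cD_x$ jump in dimension. Consequently, quantities such as $\|X_j^{(\alpha)} u\|^2$ must be interpreted via the Riemannian metric on $\cD$ constructed through local presentations rather than as norms on a fixed bundle, and the bracket-generation lemma must be phrased as a statement about longitudinal symbols on $\cF^*$ (using $\iota^*:\cF^*\to\cD^*$ from \S\ref{sec:symblonghor}) rather than about pointwise linear independence---precisely where the equivalence theory of \S\ref{sec:equivpres} and \S\ref{sec:equivRiem} ensures that the estimates are independent of the chosen local presentations and patch together globally.
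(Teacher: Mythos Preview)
Your overall strategy is the right one---Kohn's sum-of-squares argument transplanted into the longitudinal calculus of $\cF=\cU(\cD)$, as in \cite{YK1}---and your base-case estimate $\sum_{\alpha,j}\|\varphi_\alpha X_j^{(\alpha)}u\|^2\lesssim(\Delta_\cD u,u)+\|u\|^2$ is exactly the first of the paper's two adapted lemmas (Appendix~\ref{app}, replacing \cite[Lemma~4.1]{YK1}). However, you misidentify the content of the second adapted lemma, and this leads to a real gap in your reduction step.

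The crucial point is that knowing only $[\Delta_\cD,\Lambda_s]\in\Psi^{s+1}(\cF)$ is \emph{not} enough to pass from $s=0$ to general $s$: an order-$(s+1)$ remainder cannot be absorbed by the right-hand side $\|\Delta_\cD u\|_s^2+\|u\|_s^2$. What the paper actually proves (Lemma~\ref{l:com}, replacing \cite[Lemma~4.2]{YK1}) is the structured decomposition
\[
[\Delta_\cD,\Lambda_s]=\sum_{k=1}^N T^s_k X_k + T^s_0,\qquad X_k\in\cD,\quad T^s_k\in\Psi^s(\cF),
\]
obtained by writing $\Delta_\cD=\sum_{\alpha,j}\phi_\alpha(X_j^{(\alpha)})^*X_j^{(\alpha)}\psi_\alpha$ globally and commuting $\Lambda_s$ through each factor. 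The point is that the commutator is not merely of lower order, but factors through the vector fields $X_k\in\cD$ with coefficients of order $s$; this is precisely what lets you re-invoke the base estimate on $\|X_k\Lambda_s u\|$ and close the loop. Your two ``adapted lemmas'' (a bracket-bootstrap lemma and a spanning lemma) are not the ones the paper isolates: those steps are unchanged from \cite{YK1} and work without modification, whereas the genuine singular-foliation issue lies in establishing the two lemmas above using local presentations and the sum-of-squares formula~\eqref{e:sum_of_sq}. Your final paragraph about non-constant rank and equivalence of presentations, while not wrong in spirit, is therefore aimed at the wrong target.
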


As a consequence, we get the following longitudinal hypoellipticity result. Again, its proof is exactly as the proof of \cite[Thm. 2.2]{YK1}, so we omit it.

\begin{thm}\label{t:hypo}
If $u\in H^{-\infty}(\mathcal F):=\bigcup_{t\in \mathbb R}H^t(\mathcal F)$ such that $\Delta_\cD u\in H^s(\mathcal F)$ for some $s\in \mathbb R$, then $u \in H^{s+\varepsilon}(\mathcal F)$. 
\end{thm}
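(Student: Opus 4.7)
The plan is to derive Theorem~\ref{t:hypo} from the subelliptic estimate of Theorem~\ref{t:Hs-hypo} by a Friedrichs-mollifier bootstrap carried out inside the longitudinal calculus $\Psi^\bullet(\mathcal F)$. Using the longitudinally elliptic operator $\Lambda_1\in \Psi^1(\mathcal F)$ fixed in Section~\ref{s:psi}, I would introduce a regularizing family $J_\tau\in \Psi^{-\infty}(\mathcal F)$ for $\tau\in(0,1]$ (for instance $J_\tau=\varphi(\tau^2\Lambda_1^*\Lambda_1)$ for a Schwartz function $\varphi$ with $\varphi(0)=1$), which is uniformly bounded in $\Psi^0(\mathcal F)$, converges strongly to the identity on every $H^\sigma(\mathcal F)$ as $\tau\to 0$, and, by Theorem~\ref{t:comp} and the commutator theorem, whose commutators $[\Delta_\cD,J_\tau]$ lie uniformly in $\Psi^1(\mathcal F)$.

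The core step is a single ``gain of $\varepsilon$'': show that if $u\in H^t(\mathcal F)$ and $\Delta_\cD u\in H^t(\mathcal F)$ then $u\in H^{t+\varepsilon}(\mathcal F)$. I would apply the subelliptic estimate, valid a priori on $C^\infty(M)$, to the smooth function $J_\tau u$ with parameter $s=t$:
\begin{equation*}
\|J_\tau u\|_{t+\varepsilon}^2 \leq C_t\bigl(\|\Delta_\cD J_\tau u\|_t^2+\|J_\tau u\|_t^2\bigr),
\end{equation*}
and write $\Delta_\cD J_\tau u=J_\tau\Delta_\cD u+[\Delta_\cD,J_\tau]u$. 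The terms $\|J_\tau\Delta_\cD u\|_t$ and $\|J_\tau u\|_t$ are dominated uniformly in $\tau$ by $\|\Delta_\cD u\|_t$ and $\|u\|_t$ respectively. For the commutator, I would exploit the local sum-of-squares description $\Delta_{\cD,U_\alpha}=\sum_j (X_j^{(\alpha)})^*X_j^{(\alpha)}$ of Remark~\ref{rems:horlapl}\,(c), patched by a partition of unity, to split
\begin{equation*}
[\Delta_{\cD,U_\alpha},J_\tau] = \sum_j (X_j^{(\alpha)})^*[X_j^{(\alpha)},J_\tau]+\sum_j [(X_j^{(\alpha)})^*,J_\tau]X_j^{(\alpha)};
\end{equation*}
each inner commutator $[X_j^{(\alpha)},J_\tau]$ is uniformly in $\Psi^0(\mathcal F)$, so testing against $J_\tau u$ in $H^t$, integrating by parts, and applying Cauchy--Schwarz with a small parameter $\delta>0$, one bounds the borderline contribution by $\delta\|J_\tau u\|_{t+\varepsilon}$ plus lower-order terms involving $\|u\|_t$ and $\|\Delta_\cD u\|_t$, and absorbs the $\delta$-term on the left-hand side of the subelliptic inequality. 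This yields $\|J_\tau u\|_{t+\varepsilon}\leq C(\|u\|_t+\|\Delta_\cD u\|_t)$ uniformly in $\tau$.

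Once the uniform bound is in hand, weak compactness of balls in the Hilbert space $H^{t+\varepsilon}(\mathcal F)$ combined with the strong convergence $J_\tau u\to u$ in $H^t(\mathcal F)$ forces $u\in H^{t+\varepsilon}(\mathcal F)$. I would then iterate the gain: starting from $u\in H^{t_0}(\mathcal F)$ (which exists because $u\in H^{-\infty}(\mathcal F)$), define $t_{k+1}=\min(t_k+\varepsilon,s+\varepsilon)$ and apply the bootstrap step with parameter $\min(t_k,s)$; the hypothesis $\Delta_\cD u\in H^{\min(t_k,s)}(\mathcal F)$ is automatic because $\Delta_\cD u\in H^s(\mathcal F)$ and $\min(t_k,s)\leq s$. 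After finitely many steps $t_N=s+\varepsilon$, giving $u\in H^{s+\varepsilon}(\mathcal F)$ as required.

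The main obstacle is the commutator absorption just sketched: on $\mathbb R^n$ with H\"ormander classes this is a standard integration by parts, but here one must verify that the singular-foliation calculus of \cite{AS2} admits a Friedrichs family with quantitative $\Psi^0(\mathcal F)$-norm bounds for $[X_j^{(\alpha)},J_\tau]$ and that pairing in $H^t$ produces the needed $\delta$-absorption. All remaining ingredients (the density in Proposition~\ref{p:density_in_Sobolev} used to enlarge the subelliptic estimate beyond $C^\infty(M)$, and the boundedness in Theorem~\ref{t:action_in_Sobolev}) are already at our disposal, so that the remainder of the argument runs exactly as in the proof of \cite[Thm.~2.2]{YK1}.
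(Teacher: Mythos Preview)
Your proposal is correct and follows the same approach as the paper, which omits the proof entirely and defers to \cite[Thm.~2.2]{YK1}: the Kohn-style bootstrap you outline (mollify by $J_\tau\in\Psi^{-\infty}(\mathcal F)$ uniformly bounded in $\Psi^0(\mathcal F)$, apply the subelliptic estimate to $J_\tau u$, control the commutator $[\Delta_\cD,J_\tau]$ via its sum-of-squares structure, pass to the limit by weak compactness, and iterate) is exactly that argument. One small correction to your absorption sketch: the borderline commutator terms cannot be absorbed into $\delta\|J_\tau u\|_{t+\varepsilon}$, since in general $\varepsilon<1$ while each such term carries a full $X_k$-derivative; the correct sink is $\sum_k\|X_k J_\tau u\|_t^2$, which is available because the proof of Theorem~\ref{t:Hs-hypo} (combining the estimate~\eqref{Xestimate} with Lemma~\ref{l:com}) in fact yields the stronger inequality $\|v\|_{t+\varepsilon}^2+\sum_k\|X_k v\|_t^2\leq C_t(\|\Delta_\cD v\|_t^2+\|v\|_t^2)$ for $v\in C^\infty(M)$.
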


\subsubsection{Proof of Theorem \ref{t:Hs-hypo}}\label{app}%\ycomment{It is not clear why we should put this section into the appendix. It follows (almost) just after Theorem 5.7. One could insert it in the previous subsection of the unified section.}

As in \cite{YK1}, for the proof of Theorem~\ref{t:Hs-hypo} we follow Kohn's proof of the
subellipticity of the H{\"o}rmander's operators \cite{Kohn} (see also \cite{Treves1},\cite{HelfferNier}). It is easy to see that the proof of \cite[Thm. 2.1]{YK1} given in \cite[\S 4]{YK1}, also holds for the Laplacian $\Delta_{\cD}$ we constructed in this paper. The only points that need to be adapted to our current context are \cite[Lemma 4.1]{YK1} and \cite[Lemma 4.2]{YK1}. So here we just give the proofs of these lemmas for the operator $\Delta_{\cD}$.

Starting with this, \cite[Lemma 4.1]{YK1} is replaced by the next lemma.

\begin{lemma}
For any $X\in \cD$, there exists $C>0$ such that
\begin{equation}\label{Xestimate}
\|Xu\|^2 \leq C\left((\Delta_\cD u,u)+\|u\|^2\right), \quad
u\in C^\infty(M).
\end{equation}
\end{lemma}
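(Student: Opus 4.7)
The strategy is to bound $\|Xu\|^2$ pointwise in terms of the quadratic form associated with $\Delta_\cD$, exploiting the fact that $X$ acts on $C^\infty(M)$ precisely by pairing with $d_\cD u$ against the class $[X]_x \in \cD_x$. The first observation is that for any $x \in M$ and any $X \in \cD$, the very construction of $d_\cD = ev^\ast \circ d$ gives
\[
Xu(x) = du(x)(X(x)) = \langle d_\cD u(x), [X]_x \rangle_{\cD^*_x,\,\cD_x},
\]
so by the Cauchy--Schwarz inequality for the inner product on $\cD_x$ (and its dual on $\cD^*_x$),
\[
|Xu(x)|^2 \leq \|d_\cD u(x)\|_{\cD^*_x}^2 \cdot \|[X]_x\|_{\cD_x}^2.
\]
Combined with the global quadratic-form identity \eqref{eqn:quadrglob},
\[
(\Delta_\cD u,u) = \int_M \|d_\cD u(x)\|_{\cD^*_x}^2\,d\mu(x),
\]
it suffices to bound the function $x \mapsto \|[X]_x\|_{\cD_x}^2$ uniformly on $M$.

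To obtain such a bound, I would proceed locally via a presentation. Fix $x_0 \in M$ and a local presentation $(E_U,\rho_U)$ of the Riemannian metric defined on a neighborhood $U$ of $x_0$, together with a local orthonormal frame $(\omega_1,\dots,\omega_d)$ of $E_U$. Since $\rho_U(\omega_1),\dots,\rho_U(\omega_d)$ generate $\cD|_U$, one can write $X|_U = \sum_{i=1}^d f_i \rho_U(\omega_i)$ with smooth functions $f_i \in C^\infty(U)$; hence $[X]_y = \widehat{\rho}_{U,y}\bigl(\sum_i f_i(y)\omega_i(y)\bigr)$ in $\cD_y$. Because $\widehat{\rho}_{U,y}$ is a Riemannian submersion (Definition~\ref{dfn:metric}) and $(\omega_i(y))$ is orthonormal in $(E_U)_y$, we obtain the smooth upper bound
\[
\|[X]_y\|_{\cD_y}^2 \leq \Bigl\|\sum_{i=1}^d f_i(y)\omega_i(y)\Bigr\|_{(E_U)_y}^2 = \sum_{i=1}^d f_i(y)^2, \qquad y \in U.
\]
Covering $M$ by finitely many such neighborhoods (which is possible since $M$ is compact) and taking the supremum of the resulting smooth upper bounds yields a constant $C_X > 0$ with $\|[X]_y\|_{\cD_y}^2 \leq C_X$ for all $y \in M$.

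Putting the two ingredients together,
\[
\|Xu\|^2 = \int_M |Xu(x)|^2\,d\mu(x) \leq C_X \int_M \|d_\cD u(x)\|_{\cD^*_x}^2\,d\mu(x) = C_X (\Delta_\cD u,u),
\]
which proves \eqref{Xestimate} with $C = C_X$; the $\|u\|^2$ term on the right-hand side is not needed, but its presence causes no harm. The only subtle point in this argument is the uniform bound on $\|[X]_y\|_{\cD_y}$: a priori this function is merely upper semicontinuous (because $\dim \cD_y$ jumps), so one cannot work with it directly, and the detour through a local presentation, where the submersion property of $\widehat{\rho}_{U,y}$ converts expansions in $E_U$ into pointwise norm estimates in $\cD$, is the essential step. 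Compactness of $M$ then finishes the argument.
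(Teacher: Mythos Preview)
Your proof is correct, but it takes a genuinely different route from the paper's.

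The paper first establishes the estimate for $u\in C^\infty_c(U)$ supported in a single presentation chart, using the sum-of-squares form $\Delta_{\cD,U}=\sum_i \rho_U(\omega_i)^*\rho_U(\omega_i)$ directly: writing $X|_U=\sum_j a_j\rho_U(\omega_j)$ gives $\|Xu\|^2\le C\sum_j\|\rho_U(\omega_j)u\|^2=C(\Delta_\cD u,u)$. It then globalises via a partition of unity $\sum\varphi_\alpha^2=1$ and the IMS localisation formula, absorbing the commutator terms $[X,\varphi_\alpha]$ and $[[\Delta_\cD,\varphi_\alpha],\varphi_\alpha]$ into the $\|u\|^2$ contribution; this is exactly why the $\|u\|^2$ term appears in the paper's statement.

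Your argument is pointwise rather than operator-theoretic: you apply Cauchy--Schwarz in each fibre $\cD_x$ to obtain $|Xu(x)|^2\le\|d_\cD u(x)\|_{\cD^*_x}^2\,\|[X]_x\|_{\cD_x}^2$, then use the global quadratic-form identity \eqref{eqn:quadrglob} once you have a uniform bound $\|[X]_x\|_{\cD_x}^2\le C_X$. You correctly note that this norm function need not be continuous (cf.\ Remark~\ref{rem:justifRiem}), and you bypass this by bounding it from above by the smooth function $\sum_i f_i^2$ coming from a lift to $E_U$, using that $\widehat\rho_{U,y}$ is a Riemannian submersion (hence norm non-increasing). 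Compactness of $M$ then gives the uniform constant. This avoids the IMS formula entirely and yields the slightly sharper bound $\|Xu\|^2\le C(\Delta_\cD u,u)$ without the $\|u\|^2$ term. The paper's approach, by contrast, is the standard template for H\"ormander-type operators and matches the localisation machinery used elsewhere in Section~\ref{sec:anal}.
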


\begin{proof}
Let $U$ be an open subset of $M$ such that there exist a local presentation $(E_U,\rho_U)$ and a local
orthonormal frame $(\omega_1,\ldots,\omega_d)$ of $E_U$. Then, for any $u\in C^\infty_c(U)$, we have
\[
(\Delta_{\cD,U}u,u)=\sum_{i=1}^d \int_U |\rho_U[\omega_i]u(x)|^2d\mu(x).
\]
Take an arbitrary $\omega\in \Gamma E_U$ such that $\rho_U(\omega)=X\left|_U\right.$. We can write $\omega=\sum_{j=1}^da_j\omega_j$ with some $a_j\in C^\infty(\bar U), j=1,\ldots,d$. Therefore, for any $u\in C^\infty_c(U)$, we get
\[
\|Xu\|^2=\int_U |\rho_U(\omega) u(x)|^2\,d\mu(x) \leq C\sum_{j=1}^d\int_U |\rho_U[\omega_j]u(x)|^2\,d\mu(x) = C(\Delta_\cD u,u).
\]

To prove the estimate \eqref{Xestimate} in the general case, we take a
finite open covering $M=\cup_{\alpha=1}^k U_\alpha$ of $M$ such that, for any $\alpha=1,\ldots,k$, there exist a local representation $(E_{U_\alpha},\rho_{U_\alpha})$ and a local orthonormal frame $(\omega^{(\alpha)}_1,\ldots,\omega^{(\alpha)}_{d_\alpha})$ of $E_{U_\alpha}$. Take a partition of unity subordinate to this covering, that is, a family $\{\varphi_\alpha\in C^\infty(M), \alpha =1, \ldots, k\}$ of smooth functions on $M$ such that $0\leq \varphi_\alpha(x) \leq 1$ for any $x\in M$, $\operatorname{supp}\varphi_\alpha\subset U_\alpha$ and $\sum_{\alpha=1}^k\varphi^2_\alpha(x)=1$ for any $x\in M$. Now we use the IMS localization formula:
\[
(\Delta_{\cD}u,u)=\sum_{\alpha=1}^k (\Delta_{\cD,U_\alpha}(\varphi_\alpha u), \varphi_\alpha u)+\frac 12 \sum_{\alpha=1}^k ([[\Delta_{\cD,U_\alpha},\varphi_\alpha],\varphi_\alpha] u, u), \quad u\in C^\infty(M),
\]
and the fact that, for any $\varphi\in C^\infty(M)$, the operators $[X,\varphi]$ and $[[\Delta_{\cD},\varphi],\varphi]$ are zero order differential operators and, therefore, bounded in $L^2$.
\end{proof}

Now \cite[Lemma 4.2]{YK1} is replaced by the next lemma.

\begin{lemma}\label{l:com}
The operator $[\Delta_\cD, \Lambda_s]$ can be represented in the form
\[
[\Delta_\cD, \Lambda_s]=\sum_{k=1}^N T^s_k X_k+T^s_0,
\]
where $X_k\in \cD, k=1,\ldots, N,$ and $T^s_k \in
\Psi^s(\mathcal F), k=0,\ldots, N$.
\end{lemma}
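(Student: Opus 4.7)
The strategy is to exploit the global representation $\Delta_\cD=\sum_{j=1}^m Y_j^*X_j$ (with $X_j,Y_j\in\cD$) established in Section~\ref{sec:multiplier} together with the calculus rules for $\Psi^m(\cF)$ recalled in \S\ref{s:psi}: composition preserves orders (Theorem~\ref{t:comp}), commutators drop order by one, and any $X\in\cF$ (in particular any $X\in\cD$) together with its formal adjoint belongs to $\Psi^1(\cF)$. Since $M$ is compact, the sum in $\Delta_\cD=\sum_j Y_j^*X_j$ is finite, so I may commute $\Lambda_s$ term by term.

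The first step is to apply Leibniz for commutators:
\[
[\Delta_\cD,\Lambda_s]=\sum_{j=1}^m\bigl(Y_j^*[X_j,\Lambda_s]+[Y_j^*,\Lambda_s]X_j\bigr).
\]
By the commutator theorem, $A_j:=[X_j,\Lambda_s]\in\Psi^s(\cF)$ and $B_j:=[Y_j^*,\Lambda_s]\in\Psi^s(\cF)$. The second summand $B_jX_j$ is already in the required form $T^s_kX_k$ with $T^s_k=B_j\in\Psi^s(\cF)$ and $X_k=X_j\in\cD$, so it only remains to process $Y_j^*A_j$.

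The second step is to push the vector field to the right in $Y_j^*A_j$. Writing the formal adjoint as $Y_j^*=-Y_j+c_j$ with $c_j:=-\mathrm{div}_\mu(Y_j)\in C^\infty(M)\subset\Psi^0(\cF)$, I commute:
\[
Y_j^*A_j=-Y_jA_j+c_jA_j=-A_jY_j-[Y_j,A_j]+c_jA_j.
\]
Since $Y_j\in\Psi^1(\cF)$ and $A_j\in\Psi^s(\cF)$, the commutator $[Y_j,A_j]\in\Psi^s(\cF)$, and by Theorem~\ref{t:comp} also $c_jA_j\in\Psi^s(\cF)$. Thus $-A_jY_j$ is of the required form $T^s_kX_k$ with $X_k=Y_j\in\cD$ and $T^s_k=-A_j\in\Psi^s(\cF)$, while $-[Y_j,A_j]+c_jA_j$ is a pure $\Psi^s(\cF)$-term to be absorbed into $T^s_0$.

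Collecting: with the $2m$ vector fields $\{X_1,\dots,X_m,Y_1,\dots,Y_m\}\subset\cD$ playing the role of $X_1,\dots,X_N$, I set $T^s_k$ equal to $B_j$ or $-A_j$ as appropriate, and
\[
T^s_0:=\sum_{j=1}^m\bigl(-[Y_j,A_j]+c_jA_j\bigr)\in\Psi^s(\cF),
\]
giving exactly the stated decomposition. I do not anticipate a serious obstacle: the only point that needs care is the bookkeeping of orders and of the right-action of the vector fields, which is routine once one invokes the order-drop property of the commutator bracket in $\Psi^\bullet(\cF)$. The essential ingredients are the ``sum of squares'' global form of $\Delta_\cD$ (so that no partition-of-unity reduction is needed for the statement itself, only compactness of $M$ to keep the sum finite) and the fact that both $\cD\ni X$ and $X^*$ lift to multipliers in $\Psi^1(\cF)$.
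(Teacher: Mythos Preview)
Your argument is correct and follows essentially the same route as the paper: write $\Delta_\cD$ as a finite sum of products of first–order operators in $\Psi^1(\cF)$, apply Leibniz to $[\,\cdot\,,\Lambda_s]$, use $Y^*=-Y+c$ to swap the vector field to the right, and invoke the order–drop of commutators to see that all remainder terms land in $\Psi^s(\cF)$. The paper carries out exactly these manipulations, only it does so with the explicit cutoff factors $\phi_\alpha,\psi_\alpha$ from a partition of unity, arriving at $X_k\in\{\psi_\alpha X^{(\alpha)}_j,\ \phi_\alpha X^{(\alpha)}_j\}$; your choice to start from the packaged form $\Delta_\cD=\sum_j Y_j^*X_j$ merely absorbs that bookkeeping up front.

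One caveat on the citation: the global identity $\Delta_\cD=\sum_j Y_j^*X_j$ that you invoke from \S\ref{sec:multiplier} is itself attributed there to ``Lemma~A.2'', i.e.\ to the very lemma you are proving; in the paper's logical order this formula is \emph{derived} inside the proof of Lemma~\ref{l:com} via the partition of unity and the local sum-of-squares description~\eqref{e:sum_of_sq}. There is no genuine circularity in content—the decomposition follows from Remarks~\ref{rems:horlapl}(c) and compactness, independently of any commutator computation—but to make your write-up self-contained you should either reproduce that one-line partition-of-unity argument or cite Remarks~\ref{rems:horlapl}(c) directly rather than \S\ref{sec:multiplier}.
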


\begin{proof}
Let $M=\bigcup_{\alpha=1}^m U_\alpha$ be a finite open covering of $M$ such that, for any $\alpha=1,\ldots,m$, there exist a local representation $(E_{U_\alpha},\rho_{U_\alpha})$ and a local orthonormal frame $(\omega^{(\alpha)}_1,\ldots,\omega^{(\alpha)}_{d_\alpha})$ of $E_{U_\alpha}$. As mentioned above, the restriction of $\Delta_\cD$ to $U_\alpha$ is written as
\[
\Delta_\cD\left|_{U_\alpha}\right.=\sum_{j=1}^{d_\alpha}(X^{(\alpha)}_j)^* X^{(\alpha)}_j,
\]
where $X^{(\alpha)}_j=\rho_{U_\alpha}(\omega^{(\alpha)}_j)\in \cD\left|_{U_\alpha}\right.$, $j=1,\ldots, d_\alpha$.

Let $\phi_\alpha\in C^\infty(M)$ be a partition of unity subordinate
to the covering, ${\rm supp}\,\phi_\alpha\subset U_\alpha$, and
$\psi_\alpha\in C^\infty(M)$ such that ${\rm
supp}\,\psi_\alpha\subset U_\alpha$,
$\phi_\alpha\psi_\alpha=\phi_\alpha$. Then we have
\begin{multline*}
\Delta_\cD=\sum_{\alpha=1}^m \phi_\alpha (\Delta_\cD\left|_{U_\alpha}\right.)\psi_\alpha=\sum_{\alpha=1}^m\sum_{j=1}^{d_\alpha} \phi_\alpha (X^{(\alpha)}_j)^* X^{(\alpha)}_j\psi_\alpha\\ =\sum_{\alpha=1}^m \sum_{j=1}^{d_\alpha} \phi_\alpha (X^{(\alpha)}_j)^* \psi_\alpha X^{(\alpha)}_j + \sum_{\alpha=1}^m \sum_{j=1}^{d_\alpha} \phi_\alpha (X^{(\alpha)}_j)^* [X^{(\alpha)}_j,\psi_\alpha].
\end{multline*}
We can write
\begin{align*}
\phi_\alpha (X^{(\alpha)}_j)^*\psi_\alpha X^{(\alpha)}_j\Lambda_s=& \phi_\alpha (X^{(\alpha)}_j)^*\Lambda_s \psi_\alpha X^{(\alpha)}_j+\phi_\alpha (X^{(\alpha)}_j)^*[\psi_\alpha X^{(\alpha)}_j,\Lambda_s]\\=& \Lambda_s \phi_\alpha (X^{(\alpha)}_j)^*\psi_\alpha X^{(\alpha)}_j+[\phi_\alpha (X^{(\alpha)}_j)^*,\Lambda_s] \psi_\alpha X^{(\alpha)}_j\\ &+[\psi_\alpha X^{(\alpha)}_j,\Lambda_s]\phi_\alpha (X^{(\alpha)}_j)^*+ [\phi_\alpha (X^{(\alpha)}_j)^*,[\psi_\alpha X^{(\alpha)}_j,\Lambda_s]].
\end{align*}
Since $(X^{(\alpha)}_j)^*=-X^{(\alpha)}_j+c^{(\alpha)}_j$ with some $c^{(\alpha)}_j\in C^\infty(M)$, we get
\[
\Delta_\cD \Lambda_s=\Lambda_s \Delta_\cD+\sum_{\alpha=1}^m \sum_{j=1}^{d_\alpha} T^{s,(\alpha)}_{1,j} \psi_\alpha X^{(\alpha)}_j+ \sum_{\alpha=1}^m \sum_{j=1}^{d_\alpha} T^{s,(\alpha)}_{2,j} \phi_\alpha X^{(\alpha)}_j+T^s_0,
\]
where the operators
\begin{align*}
T^{s,(\alpha)}_{1,j}=&[\phi_\alpha (X^{(\alpha)}_j)^*,\Lambda_s], \quad
T^{s,(\alpha)}_{2,j}=-[\psi_\alpha X^{(\alpha)}_j,\Lambda_s], \\
T^s_0=& \sum_{\alpha=1}^m \sum_{j=1}^{d_\alpha} \Big( [\psi_\alpha X^{(\alpha)}_j,\Lambda_s]\phi_\alpha c^{(\alpha)}_j+[\phi_\alpha (X^{(\alpha)}_j)^*,[\psi_\alpha X^{(\alpha)}_j,\Lambda_s]]\\ & +[\phi_\alpha (X^{(\alpha)}_j)^* [X^{(\alpha)}_j,\psi_\alpha],\Lambda_s]\Big)
\end{align*}
belong to $\Psi^s(\mathcal F)$. Setting $\{X_k, k=1,\ldots, N\}=\{\psi_\alpha X^{(\alpha)}_j, \phi_\alpha X^{(\alpha)}_j, \alpha=1,\ldots,m, j=1,\ldots,d_\alpha \}$ with $N=2\sum_{\alpha=1}^md_\alpha$, we complete the proof.
\end{proof}

%For any $k$, we have
%\begin{multline}\label{e:1}
%\|X_ku\|_s^2= \|\Lambda_sX_ku\|^2+ \|X_ku\|^2\leq
%\|X_k\Lambda_s u\|^2+\|[\Lambda_s,X_k] u\|^2+ \|X_ku\|^2\\ \leq
%\|X_k \Lambda_s u\|^2+ (\Delta_\cD u,u)+C\| u\|_s^2.
%\end{multline}
%Next, by \eqref{Xestimate}, it follows that
%\begin{equation}\label{e:2}
%\begin{aligned}
%\|X_k\Lambda_s u\|^2\leq & C( (\Delta_\cD\Lambda_su,\Lambda_su)+ \|u\|^2_s)\\
%=&C((\Delta_\cD u,u)_s+([\Delta_\cD,\Lambda_s]u,\Lambda_su)+\|u\|^2_s)\\
%= & C((\Delta_\cD u,u)_s+((\sum_{k=1}^N T^s_k
%X_k+T^s_0)u,\Lambda_su)+\|u\|^2_s)\\ \leq &
%C_1(\|\Delta_\cD u\|^2_s+\sum_{k=1}^N \|X_ku\|_s \|u\|_s+\|u\|^2_s)\\
%\leq & \epsilon \sum_{k=1}^N \|X_ku\|^2_s
%+C_2(\epsilon)(\|\Delta_\cD u\|^2_s+\|u\|^2_s)
%\end{aligned}
%\end{equation}
%for any $\epsilon>0$ with some $C_2(\epsilon)>0$. From \eqref{e:1} and \eqref{e:2}, we
%immediately get
%\begin{equation}\label{e:3}
%\sum_{k=1}^N  \|X_ku\|_s^2\leq C(\|\Delta_\cD u\|_s^2+\|u\|^2_s).
%\end{equation}
%Plugging \eqref{e:3} into \eqref{e:0}, we obtain \eqref{e:DL}.

\appendix

%\section{Atlas of local presentations}\label{app:atlas}

%\icomment{New appendix, April 23}

\section{The longitudinal de Rham complex and the Hodge Laplacian}\label{app:deRhamHodge}

%\icomment{June 20}

%\ycomment{June 16: The idea is, first, to define the spaces $C^{\infty}(M,\Lambda^n\cD^*) $, as we did for $C^{\infty}(M,\cD^*)$  in Section 3.1. Then we use the formula 
%\[
%d_\cD\omega(X_0,\ldots, X_{k})=\sum_{i=0}^k(-1)^iX_i[\omega(X_0,\ldots,\hat{X}_i,\ldots, X_{k})]+ \sum_{i<j}(-1)^{i+j}\omega([X_i,X_j], X_0,\ldots,\hat{X}_i,\ldots,\hat{X}_j,\ldots, X_{k}).
%\]
%It seems that everything works. We don't need a metric. With George, we tried to define the corresponding multiplier, but we didn't managed to do this. This is open.}

%\icomment{Yes, I was also thinking this is the way... OK, I'll try to write down all the details in this appendix, and then we'll see what to do with it. Maybe it's a good idea to give the de Rham complex in this paper, it will be useful to people. And I think it's better to keep it in the appendix, because the rest of the paper is focused on the Laplacian, the de Rham complex is a rather different story.}

%\icomment{I understand that the multiplier is useful. But it is also useful to check the ellipticity of this differential. In the usual de Rham cohomology, the differential is an elliptic operator.}

The purpose of this appendix is to exhibit that the notion of local presentation, as well as the Riemannian metric we introduce in this paper, can be used to provide further developments for singular situations such as the ones we consider here. Specifically, we present two developments as such:
\begin{itemize}
\item We build the appropriate longitudinal de Rham complex along an arbitrary singular foliation.
\item We construct a Hodge Laplacian for an arbitrary singular foliation.
\end{itemize}
Explicit computations of the longitudinal de Rham cohomology, as well as analytic results arising from the Hodge Laplacian, are the subject of future work.

\subsection{The foliated de Rham complex of a singular foliation}\label{app:longdeRham}

In this section, we consider the case of a generalised smooth distribution $(M,\cF)$ which is involutive, namely it is a singular foliation. In this case, we extend the horizontal differential $d_{\cF} : C^{\infty}(M) \to C^{\infty}(M,\cF^{\ast})$ to a differential complex. It gives rise to an appropriate cohomology of the distribution $(M,\cF)$. This is a version of the foliated cohomology appearing in \cite[\S 2.1]{Sylvain}.

So let us make a fresh start. The following constructions apply to an arbitrary generalised smooth distribution $(M,\cD)$.

\begin{definition}\label{dfn:wedgedistr}
Let $(M,\cD)$ be a generalised smooth distribution and $k \in \N$, $k \geq 1$. We define $\Lambda^k \cD$ to be the $C^{\infty}(M)$-submodule of $\Lambda^k\cX(M)$ generated by $X_1\wedge\ldots\wedge X_k$, where $X_1,\ldots,X_k$ are vector fields in $\cD$. Also put $\Lambda^0 \cD=C^{\infty}(M)$.
\end{definition}
For $k \geq 1$ we make the following easy observations:
\begin{enumerate}
\item An arbitrary element of $\Lambda^k \cD$ is a linear combination of $\sum_{i \in I}\phi_{i}X_{i_1}\wedge\ldots\wedge X_{i_k}$, where $\phi_i \in C^{\infty}(M)$ and $X_{i_1},\ldots,X_{i_k} \in \cD$, for all $i \in I$. 
\item Let $x \in M$ and $U \subset M$ an open neighborhood of $x$. The module $\Lambda^k \cD$ can be restricted to $U$ by putting $(\Lambda^k \cD)|_U = \Lambda^k (\cD|_U)$. 
\item Since $\cD$ is locally finitely generated, $\Lambda^k \cD$ is locally finitely generated as well. %\ycomment{I think we just take local generators $X_1,\ldots, X_d$ for $\cD$ and say that the set of all $X_{i_1}\wedge\ldots\wedge X_{i_k}$ generates $\Lambda^k\cD$. Why do we need $E_U$?} \icomment{As I wrote at the comment in page 23, throughout the paper I've been trying to connect everything with local presentations. Feel free to remove this connection here as well.} %One way to see this, is the following: Let $x \in M$ and $U$ as above and consider $\rho_U : E_U \to TM$ a minimal local presentation of $\cD$ over $U$. Then, up to shrinking the neighborhood $U$ if necessary, we have that $\cD|_U$ is generated by $\widehat{\rho}_U(e_1),\ldots\widehat{\rho}_U(e_n)$, where $e_1,\ldots,e_n$ is a basis of sections for $E_U$. Now $\rho_U$ can be extended by linearity to $\Lambda^k\rho_U : \Lambda^k E_U \to \Lambda^k TM$. Put $\widehat{\Lambda^k\rho}_U$ the corresponding map between the respective modules of sections. It follows that $(\Lambda^k \cD)|_U$ is generated by the basis of $\Lambda^k E_U$ induced by $e_1,\ldots,e_n$.
%\item 
Put $(\Lambda^k \cD)_x = \frac{\Lambda^k \cD}{I_x\Lambda^k \cD}$. It is easy to see that $(\Lambda^k \cD)_x = \Lambda^k (\cD_x)$. Therefore, we also have $(\Lambda^k \cD)_x^{\ast} = \Lambda^k (\cD_x^{\ast})$. Put $(\Lambda^k \cD)^{\ast} = \bigcup_{x \in M}(\Lambda^k \cD)_x^{\ast}$.
\item Let $\rho_U : E_U \to TM$ be a local presentation of $\cD$ over an open $U\subset M$. Then $\rho_U$ can be extended by linearity to $\Lambda^k\rho_U : \Lambda^k E_U \to \Lambda^k TM$. Put $\widehat{\Lambda^k\rho}_U$ the corresponding map between the respective modules of sections. We have the commutative diagrams:
\begin{eqnarray}\label{eqn:wedge}
\xymatrix{
(\Lambda^k E_U)_x \ar@{->>}[r]^{\widehat{\Lambda^k \rho}_{U,x}} \ar@{->>}[rd]_{\Lambda^k \rho_{U,x}} & \Lambda^k \cD_x \ar@{->>}[d]^{ev_x} \\
& \Lambda^k D_x
}
\quad and \quad
\xymatrix{
 & \Lambda^k \cD^{\ast}_x \ar[d]^{\widehat{\Lambda^k \rho}^{\ast}_{U,x}} \\ 
\Lambda^k T^{\ast}_x M \ar[r]_{\Lambda^k \rho_{U,x}^{\ast}} \ar[ru]|-{ev^{\ast}_x} & \Lambda^k E^{\ast}_{U,x} 
}
\end{eqnarray}
\item We can also define the $C^{\infty}(M)$-module of smooth sections of $\Lambda^k \cD^{\ast}$ as in definition \ref{dfn:smoothdual}. Namely, smooth sections are maps $M \ni x \to \eta^{\ast}(x) \in \Lambda^k \cD^{\ast}_x$, such that: For every $x \in M$ there is a local presentation $(E_U,\rho_U)$ of $\cD$, defined in a neighborhood $U$ of $x$, so that the section of $\Lambda^k E_U^{\ast}$ defined by $\eta_U^{\ast}(y) = \widehat{\Lambda^k\rho}_{U,y} \circ \eta^{\ast}(y)$, is smooth on $U$. We denote this module by $C^{\infty}(M,\Lambda^k\cD^{\ast})$ and write $C^{\infty}_c(M,\Lambda^k\cD^{\ast})$ for the module of sections with compact support. 
\item We can also define these modules of sections in a ``coordinate-free'' way: As in corollary \ref{cor:smoothdual}, we have a bilinear pairing $$C^{\infty}_c(M,\Lambda^k\cD^{\ast}) \otimes_{C^{\infty}_c(M)}\Lambda^k\cD \to C^{\infty}(M).$$
For $\omega \in C^{\infty}_c(M,\Lambda^k\cD^{\ast})$, $X_1\wedge \ldots \wedge X_k\in \Lambda^k\cD$, the function $\langle \omega, X_1\wedge \ldots \wedge X_k\rangle\in C^{\infty}(M)$ is
given by 
\[
\langle \omega, X_1\wedge \ldots \wedge X_k\rangle(x)= \omega(x)([X_1]_x, \ldots, [X_k]_x),\quad x\in M.
\]
\end{enumerate}

Now let us look at the case of a singular foliation. To this end, we change our notation and write $(M,\cF)$ instead of $(M,\cD)$.

\begin{definition}\label{def:hordeRham}
Let $(M,\cF)$ be a singular foliation. 
\begin{enumerate}
\item Elements of $C^{\infty}(M,\Lambda^k \cF^{\ast})$ are called \emph{foliated $k$ forms}.
\item The \emph{foliated de Rham complex} associated with $(M,\cF)$ is
\[
C^{\infty}(M) \stackrel{d^0_{\cF}}{\longrightarrow} C^{\infty}(M,\cF^{\ast}) \stackrel{d^1_{\cF}}{\longrightarrow} C^{\infty}(M,\Lambda^2 \cF^{\ast}) \ldots  C^{\infty}(M,\Lambda^k \cF^{\ast}) \stackrel{d^k_{\cF}}{\longrightarrow} C^{\infty}(M,\Lambda^{k+1} \cF^{\ast}) \stackrel{d^{k+1}_{\cF}}{\longrightarrow} \ldots
\]
where $d^0_{\cF}$ is the longitudinal differential $d_{\cF}$ and for every $k \geq 1$ the differential $d_{\cF}^k$ is given by the usual Chevalley-Eilenberg formula, namely:
\begin{multline}\label{eqn:ChevEil}
d_\cF\eta^{\ast}([X_0]_x,\ldots,[X_k]_x)=\sum_{i=0}^k(-1)^iX_i\langle\eta^{\ast}, X_0\wedge\ldots\wedge\hat{X}_i\wedge\ldots\wedge X_{k}\rangle(x) \\ + \sum_{i<j}(-1)^{i+j}\langle \eta^{\ast}, [X_i,X_j]\wedge X_0\wedge\ldots\wedge\hat{X}_i\wedge\ldots\wedge\hat{X}_j\wedge\ldots\wedge X_{k}\rangle(x).
\end{multline}
for every $\eta^{\ast} \in C^{\infty}(M,\Lambda^k \cF^{\ast})$, $x\in M$  and $[X_0]_x,\ldots,[X_k]_x \in \cF_x$. One can show that this definition is correct, that is, the right hand side is independent of the choice of the representatives $X_0,\ldots,X_k \in \cF$.
\end{enumerate}
\end{definition}

%\icomment{I'm putting this example here:}

\begin{ex}
Consider the foliation $(\R^2,\cF)$ we discussed in \S \ref{sec:vanishorigin}. It is easy to see that, in this case, the foliated de Rham complex is the de Rham complex of the manifold $\R^2$. Indeed, the restriction of $\cF^{\ast}$ to $\R^2\setminus\{0\}$ is $(\R^2 \setminus\{0\}) \times \R^2$ and the differential operators $d^k_{\cF}$ are the usual de Rham operators. But $\Lambda^3(\R^2)=\Lambda^4(\R^2)=0$, whence $C^{\infty}(\R^2 \setminus\{0\},\Lambda^3(\cF^{\ast})) = C^{\infty}(\R^2 \setminus\{0\},\Lambda^4(\cF^{\ast})) = 0$. On the other hand,  for every $k$, the definition of a smooth section $\omega$ of $\Lambda^k\cF^{\ast}$ near zero uses the minimal local presentation $E_U = \R^2 \times \R^4$. That is to say, the map $\omega_U = \widehat{\Lambda^k\rho}^{\ast} \circ \omega$ must be a smooth section of $E_U$. A continuity argument for $\omega_{U}$ shows that $\omega_U(0)=0$. Passing to the duals and coming back, we find that $\omega(0)=0$ as well, whence $C^{\infty}(\R^2,\Lambda^3(\cF^{\ast}))$ vanishes. The same holds for  $C^{\infty}(\R^2,\Lambda^4(\cF^{\ast}))$. %Now, put $d^k : C^{\infty}(\R^2;\Lambda^k(E_U)) \to C^{\infty}(\R^2;\Lambda^{k+1}(E_U))$ the usual de Rham differential. We have $d^k \circ \Lambda^k\widehat{\rho}^{\ast} = \Lambda^{k+1}\widehat{\rho}^{\ast} \circ d^k_{\cF}$. So, for $k=2$, we must have $\omega(0)=0 \in \R^4$. A continuity argument shows that this also holds for $k=0, 1$.}
\end{ex}

\subsection{The Hodge Laplacian of a singular foliation}\label{app:Hodge}

%\icomment{June 26: I just finished writing up this section.}

Having defined the foliated de Rham complex in \S \ref{app:longdeRham}, it is natural to extend the familiar Hodge Laplace operator to singular foliations. Here we sketch its construction.

%\ycomment{I am sorry for not responding to your question. I have just returned home, and these were my first days here. So if you have time to write this, please do. Another point is that, apparently, the only thing, which we can do at the moment, is to give the definition of the Hodge Laplacian as a differential operator on $C^{\infty}(M,\Lambda^{k}\cD^{\ast})$. I was thinking about proving its self-adjointness and at the moment I don't see how to do this. The problem is of course that $\Lambda^{k}\cD^{\ast}$ is not a vector bundle.}

First note that, given a Riemannian metric on $(M,\cD)$, one can naturally define families of inner products on $\Lambda^{k}\cD_x$ and $\Lambda^{k}\cD^*_x$. To show their smoothness properties, we take a local presentation of the Riemannian metric defined on an open neighborhood $U$, that is, a local presentation $\rho_U : E_U \to TM$ and a smooth family of inner products in the fibers of $E_U$. Then we have smooth families of inner products on the bundles $\Lambda^k E_U$ and $\Lambda^k E^*_U$. One can show that, for every $x \in U$, the map $\widehat{\Lambda^k\rho}_{U,x} : \Lambda^k E_{U,x} \to \Lambda^k\cD_x$ is a Riemannian submersion and the adjoint map $\widehat{\Lambda^k\rho}^*_{U,x} : \Lambda^k\cD^*_x\to \Lambda^k E^*_{U,x}$ is an isometry. As a consequence, we obtain an analogue of Lemma~\ref{lem:D*smooth} for the spaces $C^{\infty}(M,\Lambda^k\cD^{\ast})$.

Now let $(M,\cF)$ be a (singular) foliation equipped with a Riemannian metric on $\cF$ and a smooth positive density on $M$. Using the Riemannian metric on $\Lambda^{k}\cF^{\ast}$ introduced above, we can consider the adjoint $d^{\ast}_{\cF} : C^{\infty}(M,\Lambda^{k+1}\cF^{\ast}) \to C^{\infty}(M,\Lambda^{k}\cF^{\ast})$ of $d_{\cF}$ defined by $$\langle d_{\cF}\alpha,\beta \rangle = \langle \alpha, d^{\ast}_{\cF}\beta \rangle$$ for every $\alpha \in C^{\infty}(M,\Lambda^k\cF^{\ast})$ and $\beta \in C^{\infty}(M,\Lambda^{k+1}\cF^{\ast})$. Its existence can be proved as in Section \ref{sec:hordif}, using local presentations.
\begin{definition}\label{dfn:HodgeLaplace}
The \emph{Hodge-Laplace operator on foliated $k$-forms} is the operator $$\Delta^{k}_{\cF} = d_{\cF}d^{\ast}_{\cF} + d^{\ast}_{\cF}d_{\cF} : C^{\infty}(M,\Lambda^k\cF^{\ast}) \to C^{\infty}(M,\Lambda^k\cF^{\ast}).$$
\end{definition}

\begin{remark}
Of course, it is necessary to examine the self-adjointness of this Hodge-Laplace operator. One way to do this seems to generalise the Chernoff criterion \cite{Chernoff} to non-smooth vector bundles such as $\Lambda^{k}\cD^{\ast}$, using local presentations. We leave this for future work.
\end{remark}

\section{Isometries of distributions}\label{app:isometry}

Let $(M,\cD)$ and $(M^\prime,\cD^\prime)$ be smooth distributions equipped with Riemannian metrics. The purpose of this section is to give, in proposition \ref{prop:isometry}, the relation of the associated horizontal Laplacians via an isometry.

To this end, first let us define the notion of isometry between distributions. Consider a diffeomorphism $f : M\to M^\prime$. We have the induced map $f^*: C^\infty(M^\prime)\to C^\infty(M)$ given, for $u\in C^\infty(M^\prime)$, by 
\[
f^*u(x)=u[f(x)], \quad x\in M,
\]
and $f_*: \cX(M)\to \cX(M^{\prime})$ given, for $X\in \cX(M)$, by 
\[
f_* X(y)=df_{x}[X(x)] \quad \text{ for all } y=f(x)\in M^{\prime}.
\]
Recall that for any $\phi \in C^{\infty}(M)$ and $X \in \cX(M)$ we have $f_{\ast}(\phi \cdot X)= (\phi\circ f^{-1}) \cdot f_{\ast}(X)$. Therefore, if $f_{\ast}(\cD) \subseteq \cD^{\prime}$, then $f_{\ast}(I_{x}\cD) \subseteq I_{f(x)} \cD^{\prime}$ for every $x \in M$. So, in this case, for every $x \in M$, the map $f$ induces a linear map $f_{\ast} : \cD_{x} \to \cD^{\prime}_{f(x)}$.

\begin{definition}\label{dfn:isometry}
We say that $f$ is an isometry of distributions if  $f_*(\cD)= \cD^{\prime}$ and, for all $x\in M$ and for any $X,Y\in \cD_x$,
\begin{equation}\label{eqn:isomdfn}
\langle f_*(X), f_*(Y)\rangle_{f(x)}=\langle X, Y\rangle_{x}.
\end{equation}
\end{definition}
Let us make the following observations regarding definition \ref{dfn:isometry}.
\begin{enumerate}
%\item \icomment{I agree with your comments here and propose we remove this item completely: Notice that definition \ref{dfn:isometry} implies that $f_*\cD = \cD^{\prime}$. Indeed, since the fibers $\cD^{\prime}_{f(x)}$ and $\cD_x$ are isometric for every $x \in M$, they are also isomorphic.\ycomment{This is not clear why the fibers are isometric. There is an isometric embedding $f_{\ast} : \cD_{x} \to \cD^{\prime}_{f(x)}$. Actually, we can always enlarge $\cD^\prime$ and nothing changes in the definition. That is why we should require $f_*\cD = \cD^{\prime}$ in the definition. } But Prop. \ref{prop:distr} shows that these fibers determine locally the modules $\cD, \cD^{\prime}$. Whence  $\cD, \cD^{\prime}$ are locally isomorphic; using a partition of unity argument we obtain that they are globally isomorphic as well (as $C^{\infty}(M)$-modules of vector fields).}

%\item \textcolor{red}{If $f : M \to M'$ is a diffeomorphism such that $f_*(\cD)=\cD'$, then there exist Riemannian metrics on $\cD$ and $\cD'$ respectively, such that $f$ is an isometry in the sense of definition \ref{dfn:isometry}. Indeed, in this case, if $X_1,X_2,\ldots, X_N$ are generators of $\cD$ (where $N$ may be infinity), then we can take $f_{\ast}X_1, f_{\ast}X_2, \ldots, f_{\ast}X_N$ to be generators of $\cD'$. Then, following the construction of Riemannian metric given in \ref{sec:constRiem} for $\cD$ and $\cD'$ using the above generators, we eventually arrive at \eqref{eqn:isomdfn}.}\ycomment{I suggest to remove this item, because the next item contains stronger result, and the proofs are similar.}

\item If $f : M \to M'$ is a diffeomorphism such that $f_*(\cD)=\cD'$, then given a Riemannian metric $\langle\ ,\ \rangle_{\cD'}$ on $\cD^\prime$, there exists a Riemannian metric $\langle\ ,\ \rangle_{\cD}$ on $\cD$ such that $f$ is an isometry. We can use the relation \eqref{eqn:isomdfn} as a definition of $\langle\ ,\ \rangle_{\cD}$.

\item Let $f : M \to M'$ be a diffeomorphism such that $f_*(\cD)=\cD'$. Let $x \in M$ and take $U$ a neighborhood of $x$ in $M$ so that $\cD\left|_U\right.$ is generated by $X_1,\ldots, X_n \in \cD$ such that $\{[X_1]_x,\ldots,[X_n]_x\}$ is a basis of $\cD_x$. Let $(E_U, \rho_U)$ be the minimal local presentation of $(M,\cD)$ associated with this data, constructed as in \S \ref{sec:minlocpre}. Then there exists a minimal local presentation $(E_{f(U)},\rho_{f(U)})$ of $(M',\cD')$ such that $df\circ\rho_{U}=\rho_{f(U)}$. Indeed, put $E_{f(U)}$ the trivial bundle $f(U) \times \R^n$, and $\rho_{f(U)}(f(\zeta),\lambda_1,\ldots,\lambda_n)=\sum_{i=1}^n \lambda_i \cdot f^{\ast}X_i(f(\zeta))$ for every $\zeta \in U$.

\item Since $f$ is a diffeomorphism and $f_*(\cD)=\cD'$, every minimal local presentation of $(M',\cD')$ is $(E_{f(U)},\rho_{f(U)})$, where $(E_U, \rho_U)$ is a minimal local presentation of $(M,\cD)$.

\item %\ycomment{Here I put a little bit more general, but indeed equivalent  statement, which I used later}
Again, let $f : M \to M'$ be a diffeomorphism such that $f_*(\cD)=\cD'$ as in the previous item.
Let $(E_{U^\prime},\rho^\prime_{U^\prime})$ be a local presentation over some open subset $U^\prime\subset M^\prime$. Then there exist a local presentation $(E_{U},\rho_U)$ over the open subset $U=f^{-1}(U^\prime)\subset M$ and a isomorphism of vector bundles $\phi: E_U\to E_{U^\prime}$ over $f:U\to U^\prime$ such that the following diagram commutes: 
\begin{equation}\label{eqn:diag1}
\xymatrix{
E_{U} \ar@{->}[r]^{\phi} \ar@{->}[d]^{\rho_U} & E_{U^\prime} \ar@{->}[d]^{\rho^\prime_{U^\prime}} \\
TU \ar@{->}[r]^{df} \ar@{->}[d] & TU^\prime \ar@{->}[d] \\
U \ar@{->}[r]^{f} & U^\prime
}
\end{equation}
We put $E_{U}=f^*E_{U^\prime}$. Since $\phi$ and $df$ are  isomorphisms, we can easily find a map $\rho_U: E_U\to TU$ so that the diagram commutes and 
$ \rho_U(\Gamma_c E_U) = \cD\left|_U\right.$.     

\item Regarding the equivalence relation we discussed in \S  \ref{sec:equivpres}, let $(E_U, \rho_U)$ and $(E_V, \rho_V)$ be minimal local presentations of $(M,\cD)$ such that $U\cap V \neq 0$. Let $W$ be an open subset of $U \cap V$. As shown in the proof of Prop.  \ref{prop:equivminlocpr}, the pullback vector bundle $E_{f(U)} \times_{(A_{f(U),f(W)}, A_{f(V),f(W)})} E_{f(W)}$ defines an equivalence between the minimal local presentations $(E_{f(U)}, \rho_{f(U)})$ and $(E_{f(V)}, \rho_{f(V)})$ of $(M',\cD')$.%\ycomment{Actually, we don't need items c), d) and f), at least, here. Here we only need e).} \icomment{I reorganised the proof of lemma \ref{lem:itemd} using idem (d). Items (c) and (f) are nice observations, let's leave them for the convenience of the reader.}

%\item Let $(E_U, \rho_U)$ and $(E_{f(U)},\rho_{f(U)})$ as above. Consider the map $f_E^* : C^{\infty}(f(U);E^*_{f(U)}) \to C^{\infty}_c(U,E_U^*)$ defined by $f_E^*(\sigma)(x)=(f \times \id)^{-1} \circ \sigma \circ f(x)$ for every $\sigma \in C^{\infty}(f(U);E^*_{f(U)})$ and $x \in U$. 

%\item We find that $f$ commutes with the horizontal differential, namely: $$f_E^* \circ d_{E^{\ast}_{f(U)}} = d_{E^{\ast}_U} \circ f^{\ast} \text{ and } f^{\ast}\circ d^{\ast}_{E^{\ast}_{f(U)}}=d^{\ast}_{E^{\ast}_{U}} \circ f_E^*$$ It follows that
%\begin{equation}\label{eq:isometry}
%f^{\ast} \circ \Delta_{E_{f(U)}} = \Delta_{E_U} \circ f^{\ast}
%\end{equation}\ycomment{I am not sure that it works. It requires to lift $f$ to a submersion, which does not seem to be canonical. There is another way below.}
\item
For any diffeomorphism $f: M\to M^\prime$ such that $f_*(\cD)=\cD^\prime$, one can define the pull-back map
\[
f^* : C^\infty(M^\prime,\cD^{\prime*})\to C^\infty(M,\cD^*)
\]
by the formula
\[
f^*\omega^*(x)=f^*[\omega^*(f(x)], \quad x\in M.  
\]
for $\omega^* \in C^\infty(M^\prime,\cD^\prime)$, where $f^{\ast} : \cD^{\prime*}_{f(x)}\to \cD^*_{x}$ is the dual map of the map $f_{\ast} : \cD_{x} \to \cD^{\prime}_{f(x)}$. We show in the lemma below that it is well defined.
\end{enumerate}

\begin{lemma}\label{lem:itemd}
The map $f^{\ast}$ defined in the last item above maps $C^{\infty}$ sections to $C^{\infty}$ sections.
\end{lemma}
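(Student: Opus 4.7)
The plan is to use Proposition~\ref{not:smoothdual} as the characterization of smoothness, since that avoids having to construct compatible local presentations on the two manifolds explicitly. Specifically, by the converse direction of that proposition, it suffices to prove that for every $X \in \cD$ the scalar function $x \mapsto \langle f^*\omega^*(x), [X]_x \rangle$ is smooth on $M$; the smoothness of each local realization $(f^*\omega^*)_V$ through any local presentation $(E_V,\rho_V)$ of $\cD$ then follows from the converse part of the proposition.

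First I would unpack the definition of $f^*$. By construction of the pull-back,
\[
\langle f^*\omega^*(x), [X]_x \rangle = \langle f^*[\omega^*(f(x))], [X]_x \rangle = \langle \omega^*(f(x)), f_*[X]_x \rangle,
\]
where $f_*: \cD_x \to \cD'_{f(x)}$ is the induced linear map. I would note in passing that $f_*$ on fibers is well defined: for $\phi \in I_x$ and $X \in \cD$ we have $f_*(\phi \cdot X) = (\phi \circ f^{-1}) \cdot f_*X$, and $\phi \circ f^{-1}$ vanishes at $f(x)$, so $f_*(I_x \cD) \subseteq I_{f(x)} \cD'$. In particular $f_*[X]_x = [f_*X]_{f(x)}$, giving
\[
\langle f^*\omega^*(x), [X]_x \rangle = \langle \omega^*(f(x)), [f_*X]_{f(x)} \rangle.
\]

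Now I would use the hypothesis $f_*(\cD) = \cD'$, which tells us that $f_*X \in \cD'$. Applying the forward direction of Proposition~\ref{not:smoothdual} to $\omega^* \in C^\infty(M',\cD'^*)$ with the test section $f_*X \in \cD'$, the function $y \mapsto \langle \omega^*(y), [f_*X]_y \rangle$ is smooth on $M'$. Composing with the smooth diffeomorphism $f$, the function $x \mapsto \langle \omega^*(f(x)), [f_*X]_{f(x)} \rangle$ is smooth on $M$. Since $X \in \cD$ was arbitrary, the converse direction of Proposition~\ref{not:smoothdual} now yields that the local realization of $f^*\omega^*$ through any local presentation of $\cD$ is smooth, hence $f^*\omega^* \in C^\infty(M,\cD^*)$.

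There is no real obstacle here: the only subtlety is the verification that $f_*$ descends to fibers, which is a direct computation, and the whole argument reduces to combining the two directions of Proposition~\ref{not:smoothdual} with the naturality of the duality pairing under pull-back. Had we instead tried to prove smoothness directly from Definition~\ref{dfn:smoothdual}, we would have been forced to choose a local presentation $(E_U,\rho_U)$ of $\cD$ near $x$ and push it forward through $f$ to obtain a local presentation of $\cD'$ near $f(x)$; this is possible (as observed in the items preceding the lemma, $E_U = f^*E_{f(U)}$ gives such a pair together with a bundle isomorphism covering $f$), but going through Proposition~\ref{not:smoothdual} is cleaner and avoids committing to a particular choice of presentation.
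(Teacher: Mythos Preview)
Your proof is correct and takes a genuinely different route from the paper's. The paper proves the lemma directly from Definition~\ref{dfn:smoothdual}: it invokes item~(d) preceding the lemma to pull back a local presentation $(E_{U'},\rho'_{U'})$ of $\cD'$ through $f$ to a local presentation $(E_U,\rho_U)$ of $\cD$ together with a bundle isomorphism $\phi:E_U\to E_{U'}$ covering $f$, and then checks that the local realization of $f^*\omega^*$ is exactly $\phi^*\omega^*_{U'}$, hence smooth. You instead bypass local presentations entirely by appealing to the coordinate-free characterization in Proposition~\ref{not:smoothdual}, reducing the question to the smoothness of the scalar functions $x\mapsto\langle\omega^*(f(x)),[f_*X]_{f(x)}\rangle$, which follows immediately from the forward direction of that proposition applied on $M'$ together with composition by $f$. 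Your approach is shorter and avoids constructing the pulled-back presentation, at the cost of relying on Proposition~\ref{not:smoothdual} as a black box; the paper's approach is more hands-on and makes explicit the compatibility of local realizations under $f^*$, which is conceptually useful elsewhere (e.g.\ in the proof of Proposition~\ref{prop:isometry}). You anticipated this comparison accurately in your final paragraph.
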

\begin{proof}
%Let us show that this map is well-defined, that is, it takes $C^\infty$ to $C^\infty$. 
Let $(E_{U^\prime},\rho^\prime_{U^\prime})$ be a local presentation over some open subset $U^\prime\subset M^\prime$. As shown in item (d) above, there exist a local presentation $(E_{U},\rho_U)$ over the open subset $U=f^{-1}(U^\prime)\subset M$ and a morphism of vector bundles $\phi: E_U\to E_{U^\prime}$ over $f:U\to U^\prime$ such that diagram \ref{eqn:diag1} commutes. %the following diagram commutes: 
%\begin{equation} 
%\xymatrix{
%E_{U} \ar@{->}[r]^{\phi} \ar@{->}[d]^{\rho_U} & E_{U^\prime} \ar@{->}[d]^{\rho^\prime_{U^\prime}} \\
%TU \ar@{->}[r]^{df} \ar@{->}[d] & TU^\prime \ar@{->}[d] \\
%U \ar@{->}[r]^{f} & U^\prime
%}
%\end{equation}
The induced map $\phi^\ast : C^\infty(U^\prime, E^\ast_{U^\prime})\to C^\infty(U, E^\ast_{U})$ is defined by 
\[
\phi^\ast s(x)=\phi^*_x[s(f(x))], \quad x\in U.
\]
We have the following commutative diagram
\begin{equation} 
\xymatrix{
C^\infty(U^\prime, \cD^{\prime*})  \ar@{->}[r]^{f^\ast} \ar@{->}[d]^{\hat\rho^{\prime*}_{U^\prime}} &  C^\infty(U, \cD^*) \ar@{->}[d]^{\hat\rho^*_{U}} \\  C^\infty(U^\prime, E^\ast_{U^\prime})
\ar@{->}[r]^{\phi^\ast} & C^\infty(U, E^\ast_{U}) 
}
\end{equation}
By this diagram, if $\omega^{\ast}_{U^\prime}\in C^\infty(U^\prime, E^\ast_{U^\prime})$ is the local realization of $\omega^{\ast}\in C^\infty(U^\prime,\cD^{\prime*})$, then the local realization of $f^*\omega^{\ast}$ is $\phi^*\omega^{\ast}_{U^\prime}\in C^\infty(U, E^\ast_{U})$. So  $f^*\omega^{\ast}$ is smooth. 
\end{proof}

%The next proposition then follows immediately from \eqref{eq:isometry}, the definition of the horizontal Laplacian $\Delta_{\cD}$ in \S \ref{sec:horLapl} and \eqref{eqn:quadrloc}, \eqref{eqn:quadrglob}.

%\ycomment{Here is an another way to prove the proposition.}

The horizontal Laplacians associated with two distributions which are isometric are related in the way described by proposition \ref{prop:isometry} below:

\begin{prop}\label{prop:isometry}
Let $(M,\cD)$ and $(M^\prime,\cD^\prime)$ be smooth distributions equipped with Riemannian metrics and $\mu$ and $\mu^\prime$ smooth positive densities on $M$ and $M^\prime$, respectively. If $f: M\to M^\prime$ is an isometry of distributions and $f^*\mu=\mu^\prime$, then the pull-back map $f^*: C^\infty(M^\prime)\to C^\infty(M)$ commutes with $\Delta_{\cD}$:
\[
f^*\circ \Delta_{\cD^\prime}u=\Delta_{\cD}\circ f^*u,\quad u\in C^\infty(M^\prime). 
\]
\end{prop}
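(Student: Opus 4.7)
My strategy is to factor $\Delta_{\cD'}=d^*_{\cD'}\circ d_{\cD'}$ and $\Delta_{\cD}=d^*_{\cD}\circ d_{\cD}$, show that $f^*$ intertwines $d_{\cD'}$ with $d_{\cD}$ and then, by a formal adjoint argument, intertwines $d^*_{\cD'}$ with $d^*_{\cD}$. Composing the two intertwinings yields the desired identity.

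The prerequisite is that $f^*$ extends to a unitary on the relevant $L^2$ spaces. For the scalar pullback $f^*:L^2(M',\mu')\to L^2(M,\mu)$, this is the change-of-variables identity built into $f^*\mu=\mu'$. For the cotangent pullback $f^*:C^\infty_c(M',\cD^{\prime*})\to C^\infty_c(M,\cD^*)$ defined just before Lemma~\ref{lem:itemd}, smoothness is exactly that lemma, and fiberwise isometry follows by dualising \eqref{eqn:isomdfn}: since $f_*:\cD_x\to\cD^{\prime}_{f(x)}$ is a linear isometry of Euclidean spaces, its adjoint $f^*_x:\cD^{\prime*}_{f(x)}\to\cD^*_x$ is as well. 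Combined with $f^*\mu=\mu'$, this gives unitarity of $f^*:L^2(M',\cD^{\prime*},\mu')\to L^2(M,\cD^*,\mu)$.

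Next I verify pointwise that $d_{\cD}\circ f^*=f^*\circ d_{\cD'}$ on $C^\infty(M')$. Unwinding definitions at a point $x\in M$ against $[X]_x\in\cD_x$, both sides reduce via the chain rule and the identity $(f_*X)(f(x))=df_x(X(x))$ to $du_{f(x)}\bigl((f_*X)(f(x))\bigr)$. Now for $\omega^*\in C^\infty_c(M',\cD^{\prime*})$ and arbitrary $v\in C^\infty_c(M)$, writing $v=f^*u$ with $u=v\circ f^{-1}\in C^\infty_c(M')$, the chain of equalities
\[
(d^*_{\cD}(f^*\omega^*),v)_{L^2(M,\mu)} = (f^*\omega^*,f^*d_{\cD'}u)_{L^2(M,\cD^*,\mu)}=(\omega^*,d_{\cD'}u)_{L^2(M',\cD^{\prime*},\mu')}=(d^*_{\cD'}\omega^*,u)_{L^2(M',\mu')}=(f^*d^*_{\cD'}\omega^*,v)_{L^2(M,\mu)}
\]
uses in turn the definition of $d^*_{\cD}$, the first intertwining, unitarity of the cotangent pullback, the definition of $d^*_{\cD'}$, and unitarity of the scalar pullback. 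Since $v$ is arbitrary in $C^\infty_c(M)$, this gives $d^*_{\cD}\circ f^*=f^*\circ d^*_{\cD'}$, and composing with the first intertwining yields
\[
\Delta_{\cD}\circ f^*=d^*_{\cD}d_{\cD}f^*=d^*_{\cD}f^*d_{\cD'}=f^*d^*_{\cD'}d_{\cD'}=f^*\circ\Delta_{\cD'}.
\]

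The main obstacle is not the adjoint manipulation, which is purely formal once unitarity is in hand, but rather the verification that the cotangent pullback is well-defined as a map of smooth sections and is fiberwise isometric—the smoothness being especially delicate at points where $\dim\cD_x$ jumps. The first point is exactly Lemma~\ref{lem:itemd} together with observations (d)--(e) on the transport of local presentations under $f$, while the second is the dualisation of \eqref{eqn:isomdfn} sketched above. Once these two preconditions are in hand, the remainder of the argument is a routine adjoint computation in Hilbert space.
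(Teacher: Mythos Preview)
Your proof is correct and follows essentially the same route as the paper: factor the Laplacian as $d^*_{\cD}\circ d_{\cD}$, establish that $f^*$ intertwines the horizontal differentials, use the isometry hypothesis together with the density condition to get unitarity of $f^*$ on both $L^2$ spaces, and then pass to adjoints. The only cosmetic difference is that the paper checks $f^*\circ d_{\cD'}=d_{\cD}\circ f^*$ via the commutative square relating the two $ev^*$ maps and naturality of the de Rham differential, whereas you unwind the pairing with $[X]_x$ directly; these are the same verification.
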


\begin{proof}
One can check that the following diagram commutes:
\begin{equation} 
\xymatrix{
\Omega^1(M^\prime) \ar@{->}[r]^{f^*} \ar@{->}[d]^{ev^*} & \Omega^1(M) \ar@{->}[d]^{ev^*} \\
C^\infty(M^\prime,\cD^{\prime*}) \ar@{->}[r]^{f^*} & C^\infty(M,\cD^*)
}
\end{equation}
Since the de Rham differential commutes with $f^*$, using the definition of $d_\cD$, we immediately get that $f^*$ commutes with $d_{\cD}$:
\begin{equation}\label{eq:commute}
f^*\circ d_{\cD^\prime}u=d_{\cD}\circ f^*u,\quad u\in C^\infty(M^\prime). 
\end{equation}
If $f$ is an isometry of distributions, for every $x \in M$, the induced map $f_{\ast} : \cD_{x} \to \cD^{\prime}_{f(x)}$ is an isometric isomorphism.
If, in addition, $f^*\mu=\mu^\prime$, then the pull-back maps $f^* : C^\infty(M^\prime)\to C^\infty(M)$ and $f^* : C^\infty(M^\prime,\cD^{\prime*})\to C^\infty(M,\cD^*)$ preserve the inner products and define unitary operators $f^* : L^2(M^\prime,\mu^\prime)\to L^2(M,\mu)$ and $f^* : L^2(M^\prime,\cD^{\prime*},\mu^\prime)\to C^\infty(M,\cD^*,\mu)$. Therefore, taking adjoints in \eqref{eq:commute}, we get that $f^*$ commutes with $d^*_{\cD}$: 
\[
f^*\circ d^*_{\cD^\prime}u=d^*_{\cD}\circ f^*u,\quad u\in C^\infty(M^\prime,\cD^{\prime*}). 
\]
The statement of the proposition then follows immediately.
\end{proof}

%\begin{prop}\label{prop:isometry}
%Let $\mu$ and $\mu^\prime$ be smooth positive densities on $M$ and $M^\prime$, respectively. If $f$ is an isometry and $f^*\mu=\mu^\prime$, then $f^*$ commutes with $\Delta_{\cD}$:
%\[
%f^*\circ \Delta_{\cD^\prime}u=\Delta_{\cD}\circ f^*u,\quad u\in C^\infty(M^\prime). 
%\]
%\end{prop}

%\left|_{U}\right.
\bibliographystyle{habbrv} 
%\bibliography{paper}

\end{document}